\documentclass[12pt,a4paper,reqno]{amsart}
\pdfoutput=1
\usepackage{amsfonts}
\usepackage{amsthm}
\usepackage{amsmath}
\usepackage{amssymb}
\usepackage{mathabx}
\usepackage{bm}
\usepackage{amscd}
\usepackage[latin2]{inputenc}
\usepackage{t1enc}
\usepackage[mathscr]{eucal}
\usepackage{indentfirst}
\usepackage{graphicx}
\usepackage{graphics}
\usepackage{pict2e}
\usepackage{epic}
\numberwithin{equation}{section}
\usepackage[margin=2.9cm]{geometry}
\usepackage{epstopdf} 
\usepackage[colorlinks,linkcolor=blue]{hyperref}
\usepackage[capitalise]{cleveref}

\crefformat{equation}{(#2#1#3)}
\crefrangeformat{equation}{(#3#1#4) to~(#5#2#6)}

\setlength{\marginparwidth}{2.5cm}
\usepackage[normalem]{ulem}

\allowdisplaybreaks

\theoremstyle{plain}
\newcommand{\dv}{\mathrm{div}}
\newcommand{\vu}{\mathbf{u}}

\newtheorem{thm}{Theorem}
\newtheorem*{thm*}{Theorem}
\newtheorem{prop}{Proposition}
\newtheorem{lem}{Lemma}
\newtheorem{cor}{Corollary}
\newtheorem{remark}{Remark}

\begin{document}



\title[Variational rotating solutions to non-isentropic E-P equations]{Variational rotating solutions to non-isentropic Euler-Poisson equations with prescribed total mass}

\author[Y.Yuan]{Yuan Yuan}
\address{South China Research Center for Applied Mathematics and Interdisciplinary Studies, South China Normal University, \\
	Guangzhou, Guangdong 510631, P. R. China. }
\email{yyuan2102@m.scnu.edu.cn}
\date{\today}



\begin{abstract}
This paper proves the existence of variational rotating solutions to the compressible non-isentropic Euler-Poisson equations with prescribed total mass.
This extends the  result of the isentropic case [Auchmuty and Beals, \textit{Variational solutions of some nonlinear free boundary problems}, Archive for Rational Mechanics and Analysis \textbf{43} (1971), no. 4, 255-271] to the non-isentropic case. 
Compared with the previous result of variational rotating solutions in non-isentropic case [Yilun Wu, \textit{On rotating star solutions to the non-isentropic Euler-Poisson equations}, Journal of
Differential Equations \textbf{259} (2015), no. 12, 7161-7198], to keep the constraint of a prescribed finite total mass, the author establishes a new variational structure of the non-isentropic Euler-Poisson equations.
\end{abstract}

\maketitle

\section{Introduction}
This paper concerns with the motions of the non-isentropic Newtonian rotating gaseous stars, which are described by the compressible full Euler-Poisson equations: 
\begin{equation}\label{FEP}
\left\{ 
\begin{aligned}
\rho_t+\dv_x(\rho \vu)&=0,\\
(\rho \vu)_t+\dv_x(\rho \vu\otimes\vu)+\nabla p+\rho \nabla\Phi&=0,\\
(\rho S)_t+\dv_x(\rho \vu S)&=0,\\
\Delta \Phi&=4\pi  \rho,
\end{aligned}
\right.
\end{equation} 
where $\rho$, $\vu$, $p$, $\Phi$ and $S$  represent the density, velocity, pressure, gravitational potential and entropy respectively;
$(t,x)=(t,x_1,x_2,x_3)\in \mathbb{R}^+\times\mathbb{R}^3$ are the time and space variables. 
The pressure $p$ is given by the equation of state,
\begin{equation}
\label{state}
p=p(\rho, S)=f(\rho)e^S.
\end{equation} 
Let $\Phi$ vanish as $x$ tends to infinity,
\begin{equation}
\label{B}
\Phi=-\int_{\mathbb{R}^3}|x-y|^{-1}\rho(y)\  dy=:-B\rho.
\end{equation}

There have been many studies on the compactly supported steady (i.e. time-independent) solutions  to \eqref{FEP},
which describe the equilibrium configurations of self-gravitating gaseous stars, 
such as white dwarf stars \cite{Chandrasekhar1957}. 
The isentropic spherically symmetric stationary solutions ($S=0,\rho(x)=\rho(|x|), \vu=0$) are often referred as the Lane-Emden solutions,
and their existence and linearly stability problem can be found in \cite{Chandrasekhar1957, Lin1997}. Recently the global nonlinear  well-posedness and stability theory have been studied extensively in \cite{Gu2016, Hadzic2016, Jang2008, Jang2014,  Jang2013, Liu2016a, Luo2014, Luo2016b, Luo2016a, Liu2019b, Liu2019c, Makino2015, Makino2017}.

For isentropic axisymmetric ($\rho(x)=\rho(\sqrt{x_1^2+x_2^2},x_3)$) rotating solutions, namely the ones rotating along the $x_3$-axis with prescribed angular momentum,
if in the cylindrical coordinates 
\begin{equation}\label{cylinder}
\eta(x)=\sqrt{x_1^2+x_2^2}~ \text{and} ~z(x)=x_3~,
\end{equation} and the angular momentum of the fluid per unit mass $\sqrt{L}$ is given, then $\vu=(-\frac{x_2\sqrt{L}}{\eta^2},\frac{x_1\sqrt{L}}{\eta^2}),0)$ ($\vu (\mathbf{0})=\mathbf{0}$), and 
the compressible Euler-Poisson equations reduce to 
\begin{equation}
\label{sss}
\nabla p=\rho\{\nabla B\rho+L(m_{\rho}(\eta))\eta^{-3} \mathbf{i_{\eta}}  \},
\end{equation}
where $\mathbf{i_{\eta}}$ is the unit vector in the radial direction in the cylindrical coordinates, and 
\begin{equation}
\label{m}
m_{\rho}(s)=\int_{\eta(y)<s}\rho(y) \ dy.
\end{equation}
Auchmuty and Beals \cite{Auchmuty1971} established the existence of steady rotating solutions by the variational method.
They proved that when $p=f(\rho)$ and $L$ satisfy the assumption (A1) and (A3) (see Section \ref{cond}),
the minimizer of
\begin{equation*}\label{E1}
\widehat{E}_1(\rho)=\int_{\mathbb{R}^3}[A(\rho)(x)+\frac{1}{2}\rho(x)L(m_{\rho}(\eta(x)))\eta(x)^{-2}-\frac{1}{2}\rho(x)B\rho(x)] \ dx,
\end{equation*}
where 
\begin{equation}\label{A}
A(s)=s\int_{0}^{s}f(t)t^{-2}\ dt,
\end{equation}
exists in an admissible $W$ (in which $\rho$ is axisymmetric, of total mass $M$ and the three integrals in $E(\rho)$ are finite). And it is compactly supported, and satisfies \eqref{sss} where $\rho>0$.
Later, Li \cite{Li1991} studied the uniformly rotating gaseous stars with prescribed constant angular velocity.
For non-rotating fluids, the energy minimizers must be symmetric decreasing after translation, namely, the shapes must be balls(\cite{Lieb1987}). 
For rotating gaseous stars, the shapes of these variational solutions are studied in \cite{Caffarelli1980, Chanillo1994}. They studied the smoothness of the free boundary of $\{ \rho=0\}$ and proved that axisymmetric rotating solutions  to have a upper number of connected components of $\{ \rho>0\}$. 
The estimtes of the diameter of the above rotating star solutions are studied in \cite{Friedman1980, Chanillo1994}.
More related results about the existence of magnetic rotating stars, or the ones with an inner hard core, regularities and stability of these variational rotating solutions can be found in \cite{Auchmuty1991, Chanillo2012, Federbush2014, Friedman1981, Jang2017, Luo2008, Luo2009, Rein2003, Wu2016}.
Recently, there are also existence results of rotating gaseous stars by means of an implicit function theorem, see \cite{Jang2019a,Jang2019,Strauss2018,Strauss2017,Strauss2019} and references therein.

Now we consider the non-isentropic steady solutions. Stationary solutions, i.e. $\vu=0$, are obtained by \cite{Deng2002}. 
For rotating ones,  if $S$ is spherically symmetric (i.e. $S(x)=S(|x|)$) and some other conditions of $S$ are assumed,  Luo and Smoller \cite{Luo2004} obtained the existence theorem of spherically symmetric solutions with uniformly rotating regular velocity for $\frac{6}{5}<\gamma<2$ , and the non-existence theorem for $1<\gamma<\frac{6}{5}$ by studying the reduced second order ODE. 
See \cite{Deng2006} for more results in the regime.
However, these spherically symmetric solutions in \cite{Luo2004} might not be the minimizers of the physical energy. 
Recently Wu \cite{Wu2015} proved the existence of variational rotating solutions to the full Euler-Poisson equations, 
but had to replacing the original constraint $\int_{\mathbb{R}^3} \rho=M$ in \cite{Auchmuty1971} by $\int_{\mathbb{R}^3}f_1w=P$, where  $w=\frac{\gamma}{\gamma-1}e^{\frac{\gamma-1}{\gamma}S}\rho^{\gamma-1}$ and $f_1$ denotes a function of the angular velocity. 
However, in order to study steady states of a fixed total mass of a rotating gaseous star with the prescribed mass angular momentum, the constraint of a fixed total mass  has to be kept.
In this paper, we impose a different variational structure which keeps the above constraint, and prove the existence of non-isentropic rotating solutions.

\vspace{0.3cm}

\textbf{Motivations:}
To find physical variational solutions to \eqref{sss} and $\eqref{FEP}_3$,
the first main difficulty is to find the variational structure for equation \eqref{sss} in the non-isentropic case, that is, to find physical and feasible functionals and admissible sets. 
The physical energy should be the sum of the internal energy, the kinetic energy and the gravitational potential energy.
The last two are not influenced by the entropy, and thus we adopt the functionals of $\widehat{E}_1(\rho)$ in \cite{Auchmuty1971} and just redefine the internal one.
The specific internal energy $e$ satisfies the thermodynamics identity
\begin{equation*}
d e=T(\rho,S) dS+\frac{p(\rho,S)}{\rho^2} d\rho,
\end{equation*}	
where $T$ is the temperature.
Therefore,
\begin{equation*}
e(\rho,S)=\int_0^{\rho}\frac{p(t,S)}{t^2} dt=\frac{A(\rho)}{\rho} e^S.
\end{equation*} 
Considering the simplest case, define $S=S(x)$ just as \cite{Luo2004}. Therefore, the physical energy $\widehat{E}_2(\rho)$ should be  
\begin{equation}
\label{E2}
\begin{aligned}
\widehat{E}_2(\rho)
=&\int_{\mathbb{R}^3} A(\rho)(x)e^S(x)\ dx+\frac{1}{2}\int_{\mathbb{R}^3} \rho(x)L(m_{\rho}(\eta(x)))\eta^{-2}(x) \ dx\\
&\qquad  - \frac{1}{2}\int_{\mathbb{R}^3} \rho(x)B\rho(x)\ dx.
\end{aligned}
\end{equation} 
However, \eqref{sss} can not be derived by taking the gradient of the corresponding Euler-Lagrange equations of $\widehat{E}_2(\rho)$, which means that the solutions of \eqref{sss} are not the minimizers of the physical energy functional $\widehat{E}_2(\rho)$.
Therefore, defining $S=S(x)$ may not be appropriate.

To overcome this difficulty, we define $S$ as a function of the mass variable $n$,
where $n$ could be a function of $x$ which also depends on $\rho$, just like $m=m_{\rho}(\eta(x))$ (see \eqref{n}).
This assumption is reasonable because 
by $\eqref{FEP}_1$ and $\eqref{FEP}_3$, 
\begin{equation*}
S_t+v\cdot\nabla S=0,
\end{equation*}
which indicates that the entropy is invariant along the particle path. 
Once having found the variational structure, we could follow the method in \cite{Auchmuty1971}: 
first prove the existence of minimizers of the energy functional in the class $W_{R}^b$, in which the support of the density is contained in the ball of radius $R$; 
then obtain uniform bounds in  $L^{\infty}$ norm and radii of these minimizers.

Here appears another main difficulty:
there is one more term $\int_{r_b(y)>r_b(x)}A(\rho(y))\cdot e^{S(n_{b,\rho}(r_b(y)))}S'(n_{b,\rho}(r_b(y)))\ dy$ in the potential function \eqref{eprime}.
This term has to be proved to be sufficiently small for all large $|x|$.
When $S$ is continuously differentiable, 
it suffices to prove $\int_{r_b(y)>r_b(x)}A(\rho(y))\ dy$ sufficiently small  for all large $|x|$.
To overcome this difficulty, we will modify the arguments in \cite{Rein2001, Luo2008}.

\vspace{0.3cm}
This paper is organized as follows: in section 2 we will present the results and discuss about the setting and assumptions in the remarks. 
In section 3 we prepare the inequalities and verify the variational structure.
The existence theorems of the rotating solutions are proved in section 4 and 5.

\section{Main Results}

Throughout this paper,
the physical energy $E(\rho,n)$ is defined as the sum of the internal energy, the kinetic energy and the gravitational potential energy:
\begin{equation}
\label{energy}
\begin{aligned}
E(\rho,n)=&\int_{\mathbb{R}^3} A(\rho(x))T(n(x))\ dx+\frac{1}{2}\int_{\mathbb{R}^3} \rho(x)L(m_{\rho}(\eta(x)))\eta^{-2}(x) \ dx\\
&\qquad - \frac{1}{2}\int_{\mathbb{R}^3} \rho(x)B\rho(x)\ dx,
\end{aligned}
\end{equation}
where $\eta$, $m_{\rho}$ and $B\rho$ are defined in \eqref{cylinder}, \eqref{m} and \eqref{B} respectively.
Let \begin{equation}
\label{rb}
r_b(x)=\sqrt{\eta^2(x)+\frac{z^2(x)}{b^2}}, \qquad \text{for some}\ b>0
\end{equation}
\begin{equation}
\label{n}
n_{b,\rho}(s)=\int_{r_b(y)\leq s}\rho(y)\ dy.
\end{equation}
and  define
\begin{equation*}
E_b(\rho)=E(\rho,n_{b,\rho}).
\end{equation*}
The expression of the potential function $E_b'$ is
\begin{equation}
\label{eprime}
\begin{aligned}
E_b'(\rho)(x)=& A'(\rho(x))T\Big(n_{b,\rho}(r_b(x))\Big)+\int_{r_b(y)>r_b(x)}A(\rho(y))T'\Big(n_{b,\rho}(r_b(y))\Big)\ dy \\
&\qquad \qquad \qquad +\int_{\eta(x)}^{\infty}L(m_{\rho}(s))s^{-3}\ ds-B\rho(x).
\end{aligned}
\end{equation}
The admissible set $W^b$ is defined as 
\begin{equation}
\label{w}
\begin{aligned}
W^b&=\{ \rho: \mathbb{R}^3\rightarrow \mathbb{R} | \; \rho(x)\geq0\ \text{a.e.},\ \int_{\mathbb{R}^3}\rho(x)\ dx=M,\ \rho(x)=\rho(r_b(x)) \\
&\text{and the three integrals in \eqref{energy} are all finite} \}. 
\end{aligned}
\end{equation}
Here $\rho(x)=\rho(r_b(x))$ implies that $\rho$ is a constant on every ellipsoid $x_1^2+x_2^2+x_3^2/b^2=C^2$. So we call it ellipsoidal symmetry in the following.
This is a stronger assumption than the axisymmetric one in \cite{Auchmuty1971}. We  will continue to discuss about it in \cref{rem-symmetry}.

\vspace{0.5cm}

Now assume the following conditions on $f$, $S$(entropy per mass) and $L$(the square of angular momentum per mass):
\begin{enumerate}
\item[(A1)] \label{cond} $f\in C^1(0,\infty)$, $f(s)\geq 0,~ f'(s)>0$ for $s>0$.
$\lim\limits_{s\rightarrow0}f(s)s^{-4/3}=0$, $\lim\limits_{s\rightarrow\infty}f(s)s^{-4/3}=\infty$.
\item[(A2)] 
$\lim\limits_{s\rightarrow\infty}f(s)s^{-\bar{\gamma}}=0$ for 
 some constant  $\bar{\gamma}>1$
\item[(A3)] 
$L(m)$ is non-negative and absolutely continuous for $0\leq m\leq M$, with $L(0)=0$.
\item[(A4)] $L(am)\geq a^{4/3}L(m)$ for $0\leq a\leq1$ and $0\leq m\leq M$.
\item[] $L'(m)\geq 0$ for $0\leq m\leq M$.
\item[(A5)] $S(n)\in C^1[0,M]$, and without loss of generality, we assume $S(0)=0$.
\item[(A6)] $|S'(n)|\leq \frac{2}{3M}$ for $0\leq n\leq M$. 
\item[(A7)] There exists $\delta_0>0$ such that $S'(n)\leq 0$ for $M-\delta_0\le n\leq M$.
\end{enumerate}

The condition (A1) and the definition of $A$ \eqref{A} imply that $A''(s)=f'(s)/s>0$ if $s>0$. Therefore $A$ is convex and $A'$ is strictly increasing.

For simplicity, in the following we denote $T:= e^S$, and thus $(A5)$ implies
that $T(n)$ is continuously differentiable for $0\leq n\leq M$ and $T(0)=1$.
Therefore, there exist positive constants $T_0$ and $T_1$ such that
\begin{equation}
\label{conditionT}
 T_1\leq T(n) \leq T_0,~ |T'(n)|\leq T_0\ \text{for} \ 0\leq n\leq M. 
\end{equation}

It is noted that if $\mathbf{v}=(-x_2\sqrt{L}/\eta^2, x_1\sqrt{L}/\eta^2,0)$ and $n=n_{b,\rho}$, then entropy function $\eqref{FEP}_3$  is satisfied.

The assumptions will be discussed in \cref{rem-assumptions}. The main results of this paper can be stated as:
\begin{thm}
\label{thm1}
Assume that $\rho$ minimizes $E_b$ in $W^b$. Then $\rho\in C(\mathbb{R}^3)$. Moreover, $\rho$ is continuously differentiable where it is positive, and \eqref{sss} is satisfied there.
\end{thm}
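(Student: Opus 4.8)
The plan is to treat Theorem~\ref{thm1} as the regularity theory for a constrained minimizer, adapting the scheme of \cite{Auchmuty1971} to accommodate the additional non-local entropy term appearing in \eqref{eprime}. First I would record the Euler--Lagrange condition. Since $\rho$ minimizes $E_b$ subject to the constraints $\int_{\mathbb{R}^3}\rho=M$, $\rho\ge 0$, and ellipsoidal symmetry, testing against admissible competitors $\rho+t\phi$ (with $\phi$ ellipsoidally symmetric, $\int\phi=0$, and $\rho+t\phi\ge 0$) and using that \eqref{eprime} is the first variation of $E_b$ produces a Lagrange multiplier $\lambda\in\mathbb{R}$ with
\[
E_b'(\rho)(x)=\lambda \ \text{ a.e. on }\{\rho>0\}, \qquad E_b'(\rho)(x)\ge\lambda \ \text{ a.e. on }\{\rho=0\};
\]
the sign constraint $\rho\ge 0$ is what converts the equality into an inequality on the vacuum set.

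Next I would solve this relation for $\rho$. By (A1) and \eqref{A} one has $A''=f'/s>0$, so $A'$ is continuous and strictly increasing with $A'(0)=0$ and $A'(\infty)=\infty$; hence $(A')^{-1}\colon[0,\infty)\to[0,\infty)$ is continuous. Abbreviating the bracket in \eqref{eprime} divided by $T(n_{b,\rho}(r_b(x)))$ as $H(x)$, the Euler--Lagrange condition reads $\rho(x)=(A')^{-1}\big(H(x)^+\big)$ with $\{\rho>0\}=\{H>0\}$. To turn this into a regularity bootstrap I first need $\rho\in L^\infty$: from $A(\rho)T\in L^1$, $T\ge T_1$, and $A(s)\gtrsim s^{4/3}$ for large $s$ (by (A1)) one gets $\rho\in L^1\cap L^{4/3}$, and then, as in \cite{Auchmuty1971}, combining the Euler--Lagrange relation with the growth hypothesis (A2) and elliptic estimates for $B\rho$ (noting $-\Delta B\rho=4\pi\rho$) upgrades $\rho$ to $L^\infty$. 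With $\rho\in L^1\cap L^\infty$, $B\rho$ is continuous; the rotation integral is continuous in $\eta(x)$, the entropy integral is continuous in $r_b(x)$ (by absolute continuity of $c\mapsto\int_{r_b>c}$, since $A(\rho)T'\in L^1$), and $T(n_{b,\rho}(r_b(x)))$ is continuous and bounded below by $T_1$. Thus $H$ is continuous, whence $\rho=(A')^{-1}(H^+)\in C(\mathbb{R}^3)$; in particular $\{\rho>0\}$ is open.

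On the open set $\{\rho>0\}$ I would bootstrap to $C^1$. With $\rho$ continuous and compactly supported, $B\rho\in C^1$ by Newtonian potential theory. Ellipsoidal symmetry reduces the non-local terms to one-dimensional integrals: since $n_{b,\rho}(s)=4\pi b\int_0^s\rho(t)t^2\,dt$, the entropy integral equals $4\pi b\int_{r_b(x)}^\infty A(\rho(s))T'(n_{b,\rho}(s))s^2\,ds$, which is $C^1$ in $r_b(x)$ with derivative $-4\pi b\,r_b^2A(\rho)T'(n_{b,\rho})$, and similarly the rotation integral is $C^1$ in $\eta(x)$. Since $A''>0$ makes $(A')^{-1}$ continuously differentiable where its argument is positive, $\rho=(A')^{-1}(H)$ is $C^1$ on $\{\rho>0\}$.

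Finally I would recover \eqref{sss} by differentiating $E_b'(\rho)\equiv\lambda$ on $\{\rho>0\}$ and multiplying by $\rho$. Using $\nabla n_{b,\rho}(r_b(x))=4\pi b\,r_b^2\rho\,\nabla r_b$, after multiplication by $\rho$ the gradient of the entropy integral contributes $-A(\rho)T'(n_{b,\rho})\nabla n_{b,\rho}$, which combines with the term $\rho A'(\rho)T'(n_{b,\rho})\nabla n_{b,\rho}$ coming from $\nabla[A'(\rho)T(n_{b,\rho})]$ to give $\big(\rho A'(\rho)-A(\rho)\big)T'(n_{b,\rho})\nabla n_{b,\rho}=f(\rho)T'(n_{b,\rho})\nabla n_{b,\rho}$, where I invoke the identity $\rho A'(\rho)-A(\rho)=f(\rho)$ from \eqref{A}. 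Together with $\rho A''(\rho)=f'(\rho)$ the internal-energy contribution becomes $\nabla\big(f(\rho)T(n_{b,\rho})\big)=\nabla p$, while the rotation and potential terms supply $\rho L(m_\rho(\eta))\eta^{-3}\nabla\eta+\rho\nabla B\rho$; this is exactly \eqref{sss} (with $\mathbf{i_\eta}=\nabla\eta$). The main obstacle is precisely this non-local entropy term $\int_{r_b(y)>r_b(x)}A(\rho)T'(n_{b,\rho})\,dy$, absent in the isentropic case: one must both establish its continuity and differentiability (routine once ellipsoidal symmetry collapses it to a radial integral) and, more delicately, check that its gradient cancels against the $T'\nabla n_{b,\rho}$ term through $\rho A'-A=f$ so that $\nabla p$ is reproduced exactly; securing the $L^\infty$ bootstrap for $B\rho$ is the other pressure point.
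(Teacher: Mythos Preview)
Your proposal is correct and follows the same scheme as the paper: derive the Euler--Lagrange relation (Lemmas~\ref{lem1}--\ref{lem2}), bootstrap to $L^\infty$ and then $C^0$ via \cite{Auchmuty1971}, and finally differentiate on $\{\rho>0\}$ using $\rho A''=f'$, $\rho A'-A=f$ together with the ellipsoidal symmetry to recover \eqref{sss}; the paper records this last cancellation via the surface-integral identity \eqref{symmetry} rather than your radial-integral reduction, but these are equivalent. Two small slips to fix: the $L^\infty$ bootstrap uses the \emph{lower} growth from (A1) (giving $A'(s)\gtrsim s^{1/3}$ for large $s$, cf.\ Lemma~\ref{lem3}), not (A2), and compact support of $\rho$ is not available at this stage---only $\rho\in L^1\cap L^\infty$, which already suffices for $B\rho\in C^1$ by Proposition~\ref{prop3}.
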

\begin{thm}
\label{thm2}
If (A1)-(A7) are all satisfied, then there is a $\rho\in W^b$ which minimizes $E_b$. Moreover, there is a constant $R_0$ such that any such $\rho$ vanishes for $|x|\geq R_0$.
\end{thm}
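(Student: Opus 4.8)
The plan is to run the direct method of the calculus of variations in the spirit of \cite{Auchmuty1971}, reserving the uniform support bound for last since that is where the genuinely non-isentropic difficulty lives. First I would establish coercivity, i.e. $\inf_{W^b}E_b>-\infty$. By $A\ge 0$ and \eqref{conditionT} the internal energy satisfies $\int A(\rho)T(n_{b,\rho})\ge T_1\int A(\rho)\ge 0$, and the kinetic term is nonnegative since $L\ge 0$ by (A3); only the Newtonian term $-\tfrac12\int\rho B\rho$ is dangerous. I would bound it by the Hardy--Littlewood--Sobolev inequality, $\int\rho B\rho\le C\|\rho\|_{6/5}^2$, and then interpolate between $L^1$ and $L^{4/3}$ using the mass constraint, $\|\rho\|_{6/5}\le\|\rho\|_1^{1/3}\|\rho\|_{4/3}^{2/3}=M^{1/3}\|\rho\|_{4/3}^{2/3}$, so that $\int\rho B\rho\le CM^{2/3}\|\rho\|_{4/3}^{4/3}$. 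The hypothesis $\lim_{s\to\infty}f(s)s^{-4/3}=\infty$ in (A1) gives $A(s)/s^{4/3}\to\infty$, whence for any $K$ one has $\int A(\rho)\ge K\|\rho\|_{4/3}^{4/3}-C_K$ after discarding the region $\{\rho<s_0\}$ with the help of $\|\rho\|_1=M$; taking $K$ large enough that $T_1K>CM^{2/3}$ lets the internal energy absorb the gravitational term, proving both coercivity and a uniform $L^{4/3}$ bound along minimizing sequences.

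For existence I would first minimize over the restricted class $W_R^b$ of densities supported in $\{|x|\le R\}$, as suggested in the introduction, to forestall loss of mass to infinity. Given a minimizing sequence $\rho_k$, the bound on $\int A(\rho_k)$ together with $A(s)/s^{4/3}\to\infty$ forces $\|\rho_k\|_{4/3}$ bounded, so along a subsequence $\rho_k\rightharpoonup\rho_R$ weakly in $L^{4/3}(\{|x|\le R\})$; the constraint $\|\rho\|_1=M$ passes to the limit because the constant function lies in $L^4$ of the bounded ball, and the ellipsoidal symmetry is preserved. The internal energy is weakly lower semicontinuous by convexity of $A$ (recall $A''=f'/s>0$), and the gravitational term is even weakly continuous since the Riesz potential defines a compact operator on $L^{4/3}$ of a bounded domain. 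For the kinetic term I would use that $m_{\rho_k}(s)\to m_{\rho_R}(s)$ pointwise and rewrite it, after an integration by parts in the cylindrical variable, as $\int_0^\infty s^{-3}\big(\int_0^{m_\rho(s)}L(t)\,dt\big)\,ds$, which is lower semicontinuous by Fatou. Hence $E_b(\rho_R)\le\liminf E_b(\rho_k)$ and $\rho_R$ minimizes over $W_R^b$.

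Next I would extract uniform bounds. By the same first-variation computation underlying Theorem~\ref{thm1}, each $\rho_R$ satisfies, in the interior $\{|x|<R\}$, the Euler--Lagrange relation that the potential function \eqref{eprime} equals a Lagrange multiplier $\lambda_R$ on $\{\rho_R>0\}$ and is $\ge\lambda_R$ on $\{\rho_R=0\}$. The scaling hypotheses (A4) and (A6), $L(am)\ge a^{4/3}L(m)$ and $e^{S(an)}\ge a^{2/3}e^{S(n)}$, are tailored so that mass-preserving ellipsoidal dilations cannot decrease the energy; comparing $E_b(\rho_R)$ against rescaled competitors yields an $L^\infty$ bound on $\rho_R$ and a bound on $\lambda_R$, both independent of $R$. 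It remains to convert these into a uniform support bound, after which, for $R>R_0$, each $\rho_R$ is supported in $\{|x|\le R_0\}$, the infima over $W_R^b$ stabilize, and $\rho_R$ minimizes over all of $W^b$; moreover the same Euler--Lagrange argument, applied to an arbitrary global minimizer, shows that any minimizer vanishes for $|x|\ge R_0$.

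The main obstacle is precisely this uniform support bound, and within it the new term $\int_{r_b(y)>r_b(x)}A(\rho(y))T'(n_{b,\rho}(r_b(y)))\,dy$ in \eqref{eprime}, which is absent in the isentropic problem. In the far field $B\rho(x)\to 0$ and the angular term $\int_{\eta(x)}^\infty L(m_\rho(s))s^{-3}\,ds$ is controlled, so the radius bound rests on showing this extra term is small for all large $|x|$. Since $S\in C^1$ gives $|T'|\le T_0$ by \eqref{conditionT}, it suffices to prove that $\int_{r_b(y)>r_b(x)}A(\rho(y))\,dy$ is small for large $|x|$; by (A2), which gives $A(s)\le C s^{\bar\gamma}$ and hence $A(\rho)\le C\|\rho\|_\infty^{\bar\gamma-1}\rho$ under the uniform $L^\infty$ bound, this in turn reduces to smallness of the tail mass, while (A7), $S'(n)\le 0$ for $M-\delta_0\le n\le M$, fixes the favorable sign of the extra term in the outermost shell so that it does not obstruct the estimate. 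I would establish the tail smallness by adapting the iterative decay arguments of \cite{Rein2001, Luo2008}, feeding the $L^\infty$ and $\lambda_R$ bounds back into the Euler--Lagrange equation to show the density decays and the outer energy is negligible, and hence must vanish beyond a fixed radius $R_0$. This last step is the one I expect to demand the most care.
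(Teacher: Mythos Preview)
Your outline is broadly correct, but you have misplaced the role of (A4) and (A6), and this leaves a genuine gap. The uniform $L^\infty$ bound on $\rho_R$ and the upper bound on $\lambda_R$ do \emph{not} come from scaling: in the paper (Lemmas~\ref{lem6}--\ref{lem7}) they come purely from the Euler--Lagrange relation \eqref{eprime1r}--\eqref{eprime2r} and the bootstrap of Lemma~\ref{lem3}, using only (A1), (A3), (A5). Conditions (A4) and (A6) are instead the engine of the tail-mass estimate (Lemma~\ref{lem13}), via a concentration-compactness type splitting: write $\rho=\rho\chi_{|x|\le r}+\rho\chi_{|x|>r}=\rho_1+\rho_2$, rescale each piece to mass $M$ by $\bar\rho_i(\cdot)=\rho_i(a_i\cdot)$ with $a_i=(M_i/M)^{1/3}$, and use (A4), (A6) to obtain $E_b(\rho)\ge a_1^5E_b(\bar\rho_1)+a_2^5E_b(\bar\rho_2)-\int\rho_1B\rho_2$, whence $-F_b\,M_1M_2\le C(E_b(\rho)-F_b)+\int\rho_1B\rho_2$. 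Applying this to $\rho_{R_j}$ requires first knowing that $\{\rho_{R_j}\}$ is a minimizing sequence on all of $W^b$ (Lemma~\ref{lem12}, which you omit) and that $\nabla B(\chi_{B_r}\rho_{R_j})\to\nabla B(\chi_{B_r}\rho_0)$ strongly in $L^2$ (Proposition~\ref{prop4}); only then does one obtain uniform smallness of $\int_{|x|>r}\rho_{R_j}$. Your ``iterative decay via the Euler--Lagrange equation'' sketch does not supply this mechanism, and without it neither the strict negativity $\lambda_{R_j}\le l<0$ (Lemma~\ref{lem14}, which needs tail mass $\le\delta_0$ so that (A7) fires) nor the smallness of $\int_{r_b(y)>r_b(x)}A(\rho_{R_j})$ can be established.

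A smaller but real gap: weak lower semicontinuity of the internal energy is not just convexity of $A$, since the integrand $A(\rho)\,T(n_{b,\rho}(r_b))$ carries the nonlocal factor $T(n_{b,\rho})$, so $\rho\mapsto\int A(\rho)T(n_{b,\rho})$ is not a convex functional. The paper (Lemma~\ref{lem4}) freezes $T$ at $T(n_{b,\rho_0})$ for the convexity step and then uses that $n_{b,\rho_k}\to n_{b,\rho_0}$ uniformly (monotone, bounded, pointwise-convergent functions) to upgrade. Finally, note that (A7) is used not merely to fix a sign inside the final support estimate, but earlier (Lemma~\ref{lem14}) to make the extra term nonpositive at a carefully chosen test point and thereby force $\lambda_{R_j}\le l<0$; the support bound in the proof of Theorem~\ref{thm2} then uses (A2) together with Lemma~\ref{lem13} to bound $\int_{r_b(y)>r_b(x)}A(\rho_{R_j})$ in absolute value by $-l/2$.
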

\begin{thm}
\label{thm3}
For any $\xi>1$, if (A1)-(A7) are all satisfied, then there is a $(\rho,n)$ in
\begin{equation}
\label{W}
W=\{(\rho,n) \ | \ \text{there exists some} \ b\in [\frac{1}{\xi},\xi] \ \text{s.t.}\ \rho\in W^b, n=n_{b,\rho} \}.
\end{equation}
which minimizes $E(\rho,n)$. 
Moreover, there is a constant $R_0$ such that any such $\rho$ vanishes for $|x|\geq R_0$.
\end{thm}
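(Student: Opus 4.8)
The goal is to upgrade the family of minimizers $\{\rho_b\}_{b}$ produced by Theorem~\ref{thm2} to a single minimizer of $E$ over the larger set $W$, where the eccentricity parameter $b$ is now itself free to range over the compact interval $[\frac{1}{\xi},\xi]$. The natural strategy is to reduce the variational problem over $W$ to a one-parameter minimization in $b$. Concretely, for each fixed $b\in[\frac{1}{\xi},\xi]$ define
\begin{equation*}
e(b)=\inf_{\rho\in W^b} E_b(\rho),
\end{equation*}
which by Theorem~\ref{thm2} is attained at some $\rho_b\in W^b$ and is finite. Since by definition $E(\rho,n_{b,\rho})=E_b(\rho)$ and every element of $W$ is of the form $(\rho,n_{b,\rho})$ with $\rho\in W^b$ for some admissible $b$, we have the identity
\begin{equation*}
\inf_{(\rho,n)\in W} E(\rho,n)=\inf_{b\in[1/\xi,\xi]} e(b).
\end{equation*}
Thus it suffices to show that the scalar function $e(b)$ attains its infimum on the compact interval $[\frac{1}{\xi},\xi]$, and then to check that the corresponding $(\rho_{b_*},n_{b_*,\rho_{b_*}})$ is a genuine minimizer over all of $W$.

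The first key step is to establish continuity (or at least lower semicontinuity) of $b\mapsto e(b)$ on $[\frac{1}{\xi},\xi]$. Given a sequence $b_k\to b_*$, I would take the associated minimizers $\rho_{b_k}$ and pass to a limit. The uniform support bound from Theorem~\ref{thm2} — the single constant $R_0$ valid for every admissible $b$ — is what makes this work: all the $\rho_{b_k}$ are supported in the fixed ball $|x|\le R_0$ and satisfy uniform mass and energy bounds, so after passing to a subsequence one extracts a weak limit and argues, exactly as in the proof of Theorem~\ref{thm2}, that $E_{b_k}(\rho_{b_k})\to e(b_*)$ using the weak lower semicontinuity of the convex internal-energy and angular-momentum terms together with the strong (compact) convergence of the gravitational term $\int\rho B\rho$. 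The ellipsoidal symmetry constraint $\rho(x)=\rho(r_b(x))$ depends continuously on $b$ through $r_b$, which is smooth and nonvanishing in $b$ on the compact interval, so trial-function comparisons give the matching upper bound $\limsup e(b_k)\le e(b_*)$.

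Once $e(b)$ is continuous on the compact interval $[\frac{1}{\xi},\xi]$, it attains its minimum at some $b_*$, and $(\rho_{b_*},n_{b_*,\rho_{b_*}})$ minimizes $E$ over $W$ by the reduction identity above. The uniform vanishing statement is immediate, since $R_0$ was already chosen in Theorem~\ref{thm2} independently of $b$, so any minimizer $\rho$ over $W$ arises as a minimizer in some $W^b$ and hence vanishes for $|x|\ge R_0$. The step I expect to be the main obstacle is the continuity of $e(b)$ at the two endpoints and, more subtly, ruling out any loss of mass or energy concentration as $b$ varies; the crucial input that prevents this is precisely the \emph{$b$-uniform} support radius $R_0$ from Theorem~\ref{thm2}, without which the extraction of a limit from $\{\rho_{b_k}\}$ could leak mass to infinity and only yield lower semicontinuity with a possible strict drop. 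I would therefore devote the bulk of the argument to verifying that the estimates producing $R_0$ are genuinely uniform in $b\in[\frac{1}{\xi},\xi]$, which should follow by tracking the $b$-dependence through the bounds of Theorem~\ref{thm2} and using the compactness of the parameter interval.
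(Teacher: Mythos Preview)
Your high-level reduction---minimize $e(b)=\inf_{\rho\in W^b}E_b(\rho)$ over the compact interval $[\tfrac{1}{\xi},\xi]$ and then prove continuity of $e(b)$---is exactly the paper's plan (the paper writes $F_b$ for your $e(b)$ and states explicitly that it suffices to prove \eqref{3.3}). Where you diverge is in the mechanism for continuity.

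The paper does \emph{not} argue via weak limits of the minimizers $\rho_{b_k}$. Instead it uses an explicit anisotropic scaling: given $\rho\in W^b$ and a nearby $\bar b$, set $a=b/\bar b$ and $\bar\rho(x)=a\,\rho(x_1,x_2,ax_3)$. Then $\bar\rho\in W^{\bar b}$, $n_{\bar b,\bar\rho}(r_{\bar b}(x))=n_{b,\rho}(r_b(x'))$, the angular-momentum term is exactly invariant, and the internal-energy and gravitational terms are compared directly with error $o(1)$ as $a\to 1$, using only the uniform bounds $\|\rho_b\|_{4/3}\le K_0$, $\|\rho_b\|_\infty\le\rho^*$, and the uniform support radius. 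Applying this with $\rho=\rho_b$ gives $F_{\bar b}\le F_b+\epsilon$, and swapping the roles of $b,\bar b$ gives the reverse inequality. This yields two-sided continuity in one stroke and never requires extracting a weak limit across varying symmetry classes.

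Your compactness route is not wrong, but it carries two extra burdens that the scaling argument sidesteps. First, $\rho_{b_k}\in W^{b_k}$ is constant on the level sets of $r_{b_k}$, not $r_{b_*}$, so you must argue that the weak limit inherits $r_{b_*}$-symmetry; this is true but needs a line of justification. Second, the functional itself changes with $k$ through $n_{b_k,\rho}$, so weak lower semicontinuity has to be proved for the moving family $E_{b_k}$, not just for a fixed $E_{b_*}$. And for the upper bound $\limsup e(b_k)\le e(b_*)$ you invoke ``trial-function comparisons''---but the natural trial function is precisely the scaled $\bar\rho$ above, at which point you have recovered the paper's argument anyway.

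One correction: Theorem~\ref{thm2} as stated gives an $R_0$ depending on the fixed $b$; it does \emph{not} assert uniformity in $b$. The paper establishes the $b$-uniform bounds separately in Section~5 (Corollaries~\ref{lem16}--\ref{lem18}), by restricting to $b\in\Gamma=\{b:F_b\le F_{b_0}\}$ and re-running the estimates of Lemmas~\ref{lem5}--\ref{lem7} with $F_{b_0}$ in place of $E_b(\rho_{\pi_0})$. You are right that this is where the real work lies, but it is additional work beyond Theorem~\ref{thm2}, not a consequence of it.
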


\vspace{0.5cm}
\cref{thm1} implies that the energy minimizers of $E_b$ are the non-isentropic axisymmetric rotating solutions to the full Euler-Poisson equations \eqref{FEP}.
 \cref{thm2} and \cref{thm3} show that such minimizers exist in $W_b$ and $W$.
	Here are some remarks on the results and the conditions (A1)-(A7).
\begin{remark}\label{rem-symmetry}
		Our result extends that of the isentropic case (\cite{Auchmuty1971}) to the non-isentropic case.
		However, in order to make sure that the energy minimizers are the solutions to \eqref{sss}, 
		here $\rho$ has to be assumed to satisfy $\rho(x)=\rho(r_b(x))$
		(see \eqref{symmetry} in the proof of \cref{thm1}).
		In this regime, the definition of $r_b(x)$ determines the symmetries of the solutions.
		
	    Of course we can define $r_b(x)$ as any other kinds of hypersurfaces. However, if we take $r_b(x)$ as $\eta(x)$, then $n_{b,\rho}(r_b(x))$ is same as $m_{\rho}(\eta(x))$. In this way, the entropy of the star is the constant in any hypersurface $\{\eta(x)=a\}_{a>0}$, which contracts to the observations in astrophysics: the substance of the star has higher temperature or entropy as it approaches to the core.
		
		This ellipsoidally symmetric condition ($\rho(x)=\rho(r_b(x))$) in this paper is stronger than the axisymmetric one ( $\rho(x)=\rho(\eta(x),z(x))$ ) (\cite{Auchmuty1971}), but weaker than the spherical symmetry (\cite{Luo2004}):
		when $b=1$, $\rho(x)=\rho(r_b(x))$ is exactly the spherical symmetry, and  \cref{thm3} implies that the energy minimizers exist among a large class of ellipsoidally symmetric functions.
		The motivation to consider this ellipsoidal symmetry is a compromise.	
		Actually, if the star is rotating with non-zero uniform angular velocity, the  configuration is not likely to be spherically symmetric, see \cite{Caffarelli1980}.
		We prefer to consider solutions rather than spherically symmetric ones, but as explained in the above, in this regime we cannot define $r_b(x)$ just in axisymmetric motions.	
\end{remark}
\begin{remark}
		The main difference between our result and the previous result (\cite{Wu2015}) of the existence of variational non-isentropic solutions is that, the original constraint $\int_{\mathbb{R}^3} \rho=M$ in \cite{Auchmuty1971} is kept unchanged here, while in \cite{Wu2015} it was changed  to $\int_{\mathbb{R}^3}f_1w=P$, where  $w=\frac{\gamma}{\gamma-1}e^{\frac{\gamma-1}{\gamma}S}\rho^{\gamma-1}$ and $f_1$ denotes a function of the angular velocity.
		We prefer to keep this more physical constraint, since in this way the energy minimizers can be interpreted as steady states of a fixed mass of rotating gaseous star with the prescribed angular momentum.
\end{remark}
\begin{remark} \label{rem-assumptions} ~\\
\begin{enumerate}  
	\item[1)] We adopt the assumptions in \cite{Auchmuty1971} as the condition (A1) and (A3). For polytropic gases, $p=e^S\rho^{\gamma}$, the condition (A2) is automatically satisfied and the condition (A1) just implies $\gamma>\frac{4}{3}$.
	\item[2)] The condition (A4) is also imposed by Luo and Smoller in \cite{Luo2008, Luo2009}. The monotonicity condition of (A4) is called the S\"{o}lberg stability criterion(\cite{Tassoul2015}).
	\item[3)] The condition (A6) is set such that for any $0\leq a\leq1, 0\leq n\leq M$,
	\begin{equation}
	\label{conditionS}
	e^{S(an)}\geq a^{2/3} e^{S(n)}, ~\text{and}~ e^{S(M-aM+an)}\geq a^{2/3}e^{S(n)}.
	\end{equation}	
	Indeed, 
	\begin{equation*}
	\begin{aligned}
	&\exp\Big\{ S(an)-S(n)\Big\}
		\geq \exp\Big\{-\sup\limits_{n\in [0,M]}|S'(n)| \cdot (1-a)n\Big\}\\
		&\qquad \geq \exp\Big\{-\frac{2}{3M}(1-a) n  \Big\} 
		\geq \exp\Big\{-\frac{2}{3}  (1-a) \Big\}\\
	&\qquad \geq \exp \Big\{\frac{2}{3}\ln a \cdot \frac{1-a}{-\ln a} \Big\}
		\geq \exp\Big\{\frac{2}{3}  \ln a \Big\}  =  a^{2/3}\\
	&\exp\Big\{ S(M-aM+an)-S(n)\Big\}
		\geq \exp\Big\{-\sup\limits_{n\in [0,M]}|S'(n)| \cdot (1-a)(M-n)\Big\}\\
	&\qquad \geq \exp\Big\{-\frac{2}{3M} (1-a)(M-n) \Big\} 
		\geq  a^{2/3}
		\end{aligned}
		\end{equation*} 
	Here the inequality $\frac{1-a}{-\ln a}\leq 1$ is used ($\frac{-\ln a}{1-a}$ is monotone decreasing on $(0,1)$ and $\lim\limits_{t\rightarrow 1^- } -\frac{\ln t}{1-t}=1$).
	So condition \eqref{conditionS} is weaker than the one in \cite{Luo2004}, under which $|S'|$ is required to be sufficiently small. 
    \item[4)] The condition (A7) requires that the entropy of the gaseous star is decreasing when sufficiently near the vacuum. 
	This condition is weaker than those conditions on the entropy in \cite{Luo2004, Wu2015}
\end{enumerate}
\end{remark} 

Throughout the paper, we will write $C$ for a generic constant and 
$$
\int f\ dx=\int_{\mathbb{R}^3}f(x)\ dx.
$$
$\|\cdot\|_p$ denotes the norm in the $L^p$ function space:
$$
\|f\|_p:=\left(\int |f|^p \ d\mu \right)^{1/p}.
$$

The rest of the paper is arranged as the following. 
We give the basic inequalities used in this paper and prove 
 \cref{thm1} in Section 3. 
And then by analyzing an energy-minimizing sequence, the existence of the energy minimizers on $W^b$ is established in Section 4, 
which is exactly  \cref{thm2}. 
This existence theorem is improved by enlarging the class $W^b$ of $\rho$ to $\bigcup\limits_{b\in [\frac{1}{\xi},\xi]} W^b$ in Section 5.

\section{Inequalities and Regularity of minimizers}

\begin{prop}
\label{prop1}
Assume that $\rho\in L^1\cap L^p$. If $1<p\leq \frac{3}{2}$. Then $B\rho\in L^r$ for $3<r<3p/(3-2p)$, and 
\begin{equation} \label{ineq1}
\| B\rho\|_r\leq C(\|\rho\|_1^{\alpha_1}\|\rho\|_p^{1-\alpha_1}+\|\rho\|_1^{\alpha_2}\|\rho\|_p^{1-\alpha_2})
\end{equation}
where $0<\alpha_1,\alpha_2<1$.\\
If $p>\frac{3}{2}$, then $B\rho$ is bounded and continuous and satisfies \eqref{ineq1} with $r=\infty$.
\end{prop}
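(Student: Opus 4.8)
The plan is to regard $B\rho$ as the convolution $B\rho = K*\rho$ with the Newtonian kernel $K(x)=|x|^{-1}$, and to estimate it by splitting $K$ into its singular near part and its slowly-decaying far part. Concretely, I would write $K = K_1 + K_\infty$ with $K_1 = K\,\chi_{\{|x|\le 1\}}$ and $K_\infty = K\,\chi_{\{|x|>1\}}$. A computation in polar coordinates gives $\|K_1\|_q^q = 4\pi\int_0^1 r^{2-q}\,dr<\infty$ precisely when $q<3$, and $\|K_\infty\|_q^q = 4\pi\int_1^\infty r^{2-q}\,dr<\infty$ precisely when $q>3$. Hence $K_1\in L^{q_1}$ for every $1\le q_1<3$ and $K_\infty\in L^{q_2}$ for every $q_2>3$ (with $K_\infty$ also bounded and continuous). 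Since $\|B\rho\|_r \le \|K_1*\rho\|_r + \|K_\infty*\rho\|_r$, it suffices to estimate each piece separately.

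To each piece I would apply Young's convolution inequality $\|K_i*\rho\|_r \le \|K_i\|_{q_i}\|\rho\|_{s_i}$, valid when $\tfrac{1}{q_i}+\tfrac{1}{s_i}=1+\tfrac{1}{r}$ with all exponents in $[1,\infty]$, together with the interpolation (log-convexity) bound $\|\rho\|_{s}\le \|\rho\|_1^{\alpha}\|\rho\|_p^{1-\alpha}$, valid for $1\le s\le p$ with $\tfrac{1}{s}=\alpha+\tfrac{1-\alpha}{p}$. The whole argument then reduces to choosing auxiliary exponents $s_1,s_2\in(1,p)$ (so that $\alpha_1,\alpha_2\in(0,1)$) for which the Young-conjugate exponent $q_1$ lands in $[1,3)$ for the near part and $q_2$ lands in $(3,\infty]$ for the far part.

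Carrying out the bookkeeping in the case $1<p\le 3/2$: the requirement $q_1<3$ translates, via the Young relation, into $s_1>\tfrac{3r}{2r+3}$, while $q_2>3$ translates into $s_2<\tfrac{3r}{2r+3}$. Since $r>3$ one has $\tfrac{3r}{2r+3}>1$, so the far-part exponent may be chosen in the nonempty interval $s_2\in(1,\tfrac{3r}{2r+3})$, and one checks directly that then $q_2\in(3,r)$. The decisive point is that $\tfrac{3r}{2r+3}<p$ is \emph{equivalent} to $r<\tfrac{3p}{3-2p}$, which is exactly the hypothesized upper bound on $r$; hence the near-part exponent may be chosen in the nonempty interval $s_1\in(\tfrac{3r}{2r+3},p)$, giving $q_1\in(1,3)$. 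With such choices both $\alpha_i$ lie strictly in $(0,1)$, the remaining admissibility $q_i,s_i\ge 1$ follows (e.g. $s_i\le p<r$ forces $q_i\ge1$), and adding the two Young–interpolation estimates yields exactly \eqref{ineq1}. For $p>3/2$ and $r=\infty$ the Young relation becomes conjugacy $\tfrac{1}{q_i}+\tfrac{1}{s_i}=1$; then $q_1<3$ forces $s_1>3/2$ and $q_2>3$ forces $s_2<3/2$, both achievable with $s_1,s_2\in(1,p)$ since $p>3/2$, giving boundedness of $B\rho$ with the stated estimate.

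Finally, continuity of $B\rho$ when $p>3/2$ I would deduce from the same decomposition: each $K_i*\rho$ is a convolution of an $L^{q_i}$ function with an $L^{s_i}$ function whose exponents are conjugate, and such a convolution is uniformly continuous — one proves this by approximating $\rho$ in $L^{s_i}$ by continuous compactly supported functions, for which the convolution is manifestly continuous, and passing to the uniform limit via H\"older. The main obstacle is not any single deep estimate but the exponent bookkeeping of the third paragraph: one must confirm that the admissible windows for $(q_i,s_i)$ are simultaneously nonempty, and it is precisely here that the upper endpoint $r=\tfrac{3p}{3-2p}$ enters, through the identity $\tfrac{3r}{2r+3}<p \iff r<\tfrac{3p}{3-2p}$.
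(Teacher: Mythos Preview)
Your argument is correct and is precisely the standard route: split the Newton kernel at scale one, apply Young's convolution inequality to each piece, and interpolate $\|\rho\|_{s_i}$ between $L^1$ and $L^p$; the exponent bookkeeping you carry out is accurate, and the identity $\tfrac{3r}{2r+3}<p \iff r<\tfrac{3p}{3-2p}$ is exactly where the upper restriction on $r$ enters. The paper does not supply its own proof but defers to \cite{Auchmuty1971} with the remark that it follows from ``standard H\"older inequality and estimates for the Riesz potential'', which is the same circle of ideas you use (Young's inequality being H\"older applied inside the convolution, and the two-term form of \eqref{ineq1} reflecting exactly your near/far splitting), so your approach matches the intended one.
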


\begin{prop}
\label{prop2}
Suppose that $\rho \in L^1\cap L^{4/3}$. Then 
\begin{equation}
\label{ineq2}
\|\nabla B\rho\|_{2}^2=|\int\rho B\rho|\leq C\int |\rho|^{4/3}(\int|\rho|)^{2/3}.
\end{equation}
\end{prop}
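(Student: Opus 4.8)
The plan is to treat the two assertions of the statement separately: first the identity $\|\nabla B\rho\|_2^2=|\int\rho\,B\rho|$, and then the estimate $|\int\rho\,B\rho|\le C\int|\rho|^{4/3}(\int|\rho|)^{2/3}$, which is where the real work lies.

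For the identity I would use that, up to the normalizing constant fixed by \eqref{B}, $B\rho$ is the Newtonian potential of $\rho$, so that $-\Delta(B\rho)=4\pi\rho$ in the distributional sense. Formally, integration by parts gives $\int|\nabla B\rho|^2=-\int B\rho\,\Delta(B\rho)=4\pi\int\rho\,B\rho$, and since the kernel $|x-y|^{-1}$ is positive the quantity $\int\rho\,B\rho$ is non-negative, which accounts for the absolute value. To avoid fussing over the decay of $B\rho$ and $\nabla B\rho$ at infinity, I would carry this out on the Fourier side: with $\widehat{|x|^{-1}}(\xi)=c\,|\xi|^{-2}$, both $\|\nabla B\rho\|_2^2$ and $\int\rho\,B\rho$ equal a fixed constant multiple of $\int|\xi|^{-2}|\hat\rho(\xi)|^2\,d\xi$ by Plancherel, which yields the equality the moment this integral is known to be finite (the constant being immaterial for \eqref{ineq2}, as it is absorbed into $C$).

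The substance is bounding the Coulomb-type energy $\int\rho\,B\rho=\iint\frac{\rho(x)\rho(y)}{|x-y|}\,dx\,dy$, and my preferred route keeps this self-contained in the spirit of Proposition \ref{prop1}: split the kernel as $|x-y|^{-1}=|x-y|^{-1}\mathbf{1}_{\{|x-y|\le R\}}+|x-y|^{-1}\mathbf{1}_{\{|x-y|>R\}}=:K_1+K_2$. For the near piece, $K_1\in L^2(\mathbb{R}^3)$ with $\|K_1\|_2^2=4\pi R$, so Hölder (with conjugate exponent $4$ to $4/3$) followed by Young's convolution inequality gives $\int\rho\,(K_1*\rho)\le\|\rho\|_{4/3}\|K_1\|_2\|\rho\|_{4/3}\le C R^{1/2}\|\rho\|_{4/3}^2$. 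For the far piece the pointwise bound $K_2\le R^{-1}$ yields $\int\rho\,(K_2*\rho)\le R^{-1}\|\rho\|_1^2$. Adding and optimizing the balance $CR^{1/2}\|\rho\|_{4/3}^2+R^{-1}\|\rho\|_1^2$ over $R>0$ (the optimal $R$ scaling like $(\|\rho\|_1^2/\|\rho\|_{4/3}^2)^{2/3}$) produces exactly $C\|\rho\|_{4/3}^{4/3}\|\rho\|_1^{2/3}=C\int|\rho|^{4/3}(\int|\rho|)^{2/3}$, which is \eqref{ineq2}; this also retroactively shows $\int\rho\,B\rho<\infty$, justifying the finiteness used in the first step.

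I do not expect a genuine obstacle: the only delicate point is the rigorous justification of the integration-by-parts identity, which the Plancherel argument above handles cleanly, and the rest is bookkeeping with exponents. As a slicker alternative to the kernel splitting I could instead invoke the Hardy--Littlewood--Sobolev inequality on $\mathbb{R}^3$ with $\lambda=1$: taking both factors in $L^{6/5}$ satisfies the balance $\frac56+\frac56+\frac13=2$, so $\iint\frac{\rho(x)\rho(y)}{|x-y|}\,dx\,dy\le C\|\rho\|_{6/5}^2$, after which interpolation $\|\rho\|_{6/5}\le\|\rho\|_1^{1/3}\|\rho\|_{4/3}^{2/3}$ (the exponent $\theta=\tfrac13$ coming from $\frac56=\theta+\frac34(1-\theta)$) gives $\|\rho\|_{6/5}^2\le\|\rho\|_1^{2/3}\|\rho\|_{4/3}^{4/3}$ and hence \eqref{ineq2} again.
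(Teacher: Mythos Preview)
Your proposal is correct and in fact supplies considerably more detail than the paper, which does not prove Proposition~\ref{prop2} at all but simply cites \cite{Auchmuty1971} with the remark that it follows from ``standard H\"older inequality and estimates for the Riesz potential.'' Your second route---Hardy--Littlewood--Sobolev with exponent $6/5$ followed by the interpolation $\|\rho\|_{6/5}\le\|\rho\|_1^{1/3}\|\rho\|_{4/3}^{2/3}$---is precisely what that phrase points to, so in spirit you match the paper's intended argument. Your first route via the near/far kernel splitting and Young's inequality is a genuinely more elementary alternative that avoids invoking HLS as a black box; it is self-contained and gives the same sharp exponents after optimizing in $R$. One small remark: the positivity of $\int\rho\,B\rho$ for sign-changing $\rho$ does not follow from pointwise positivity of the kernel alone, but your Plancherel argument (which exhibits the quantity as $c\int|\xi|^{-2}|\hat\rho(\xi)|^2\,d\xi$) handles this cleanly and is the right justification.
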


\begin{prop}
\label{prop3}
If $\rho \in L^1\cap L^p$ for some $p>3$, then $B\rho$ is continuously differentiable.
\end{prop}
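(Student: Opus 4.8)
The plan is to exhibit the gradient of $B\rho$ explicitly and then realize $B\rho$ as a uniform limit of genuinely $C^1$ functions whose gradients also converge uniformly. Writing the Newtonian kernel as $K(x)=|x|^{-1}$ so that $B\rho=K*\rho$, the natural candidate for the derivative is
\[
g(x)=-\int_{\mathbb{R}^3}\frac{x-y}{|x-y|^3}\,\rho(y)\ dy ,
\]
the formal differentiation under the integral sign. First I would verify that $g$ is well defined and bounded by splitting the domain into $\{|x-y|<1\}$ and $\{|x-y|\ge 1\}$. The far part is controlled by $\|\rho\|_1$, since $|x-y|^{-2}\le 1$ there, while for the near part I use H\"older with the conjugate exponent $p'=p/(p-1)$, so that $\int_{|x-y|<1}|x-y|^{-2}|\rho(y)|\,dy\le\|\rho\|_p\big(\int_{|z|<1}|z|^{-2p'}\,dz\big)^{1/p'}$, the last integral being finite precisely because $2p'<3$, i.e. $p>3$. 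This is exactly where the hypothesis $p>3$ enters.

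Next I would regularize the kernel. Fixing a smooth cutoff $\chi$ with $\chi(r)=0$ for $r\le 1/2$ and $\chi(r)=1$ for $r\ge 1$, I set $K_\varepsilon(x)=|x|^{-1}\chi(|x|/\varepsilon)$, so that $K_\varepsilon\in C^\infty$, $K_\varepsilon=K$ on $\{|x|\ge\varepsilon\}$, and $|\nabla K_\varepsilon(x)|\le C|x|^{-2}$ uniformly in $\varepsilon$ (the term from differentiating $\chi$ lives on $\varepsilon/2\le|x|\le\varepsilon$, where $|x|^{-1}\varepsilon^{-1}\le C|x|^{-2}$). Since $K_\varepsilon$ and $\nabla K_\varepsilon$ are bounded and continuous, the convolution $B_\varepsilon\rho:=K_\varepsilon*\rho$ is of class $C^1$ with $\nabla B_\varepsilon\rho=(\nabla K_\varepsilon)*\rho$ for every $\rho\in L^1$.

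The core of the argument is two uniform convergences as $\varepsilon\to0$. Because $K-K_\varepsilon$ and $\nabla K-\nabla K_\varepsilon$ are supported in $\{|x|<\varepsilon\}$ and dominated there by $|x|^{-1}$ and $C|x|^{-2}$ respectively, H\"older gives
\[
\|B\rho-B_\varepsilon\rho\|_\infty\le\|\rho\|_p\Big(\int_{|z|<\varepsilon}|z|^{-p'}\,dz\Big)^{1/p'},\qquad
\|g-\nabla B_\varepsilon\rho\|_\infty\le C\|\rho\|_p\Big(\int_{|z|<\varepsilon}|z|^{-2p'}\,dz\Big)^{1/p'},
\]
and both right-hand sides tend to $0$: the first needs only $p>3/2$, while the second again requires $2p'<3$, i.e. $p>3$. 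Hence $B_\varepsilon\rho\to B\rho$ and $\nabla B_\varepsilon\rho\to g$ uniformly on $\mathbb{R}^3$.

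Finally I would invoke the standard theorem that a uniform limit of $C^1$ functions whose gradients converge uniformly is itself $C^1$, with gradient equal to the limit of the gradients; this yields $B\rho\in C^1(\mathbb{R}^3)$ with $\nabla B\rho=g$. The main obstacle is entirely the singularity of $\nabla K$ at the origin: its $|x|^{-2}$ growth is in $L^{p'}_{\mathrm{loc}}$ only when $p>3$, so this single estimate dictates the regularity threshold, and the whole scheme is arranged so that every other contribution (in particular the far field, absorbed by $\|\rho\|_1$) is harmless.
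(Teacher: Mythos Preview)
Your argument is correct. The paper does not actually give a proof of this proposition; it simply remarks that Propositions~\ref{prop1}--\ref{prop3} ``are proved in \cite{Auchmuty1971} by standard H\"older inequality and estimates for the Riesz potential,'' and your regularize-and-pass-to-the-limit scheme is precisely such a standard argument, with the integrability condition $2p'<3$ (equivalently $p>3$) on the near-field part of $\nabla K$ being exactly the point where the hypothesis enters.
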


The above three propositions are proved in \cite{Auchmuty1971} by standard H\"{o}lder inequality and estimates for the Riesz potential.

Let $B_R=\{x\in \mathbb{R}^3| |x|<R \}$. 
\begin{prop}
\label{prop4}
Let $\{\rho_j\}_{j\in \mathbb{N}^*}$ be a bounded sequence in $L^{p}(\mathbb{R}^3)$, $p>3$.
Suppose that 
$$
\rho_j\rightharpoonup \rho_0 \quad \text{weakly in} \ L^{p}(\mathbb{R}^3),
$$
and $\int \rho_j=M$, $j\in \mathbb{N}^*\cup \{0\}$.
Then for any $R>0$, 
$$
\nabla B(\chi_{B_R}\rho_j)\rightarrow \nabla B(\chi_{B_R}\rho_0) \quad \text{strongly in }\ L^2(\mathbb{R}^3),
$$
where $\chi$ is the characteristic function. 
\end{prop}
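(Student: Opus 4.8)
The plan is to reduce the claim, by linearity of $B$, to showing that $\nabla B w_j \to 0$ strongly in $L^2(\mathbb{R}^3)$, where $w_j := \chi_{B_R}(\rho_j-\rho_0)$. Since multiplication by $\chi_{B_R}$ is bounded on $L^p$ and preserves weak convergence, $w_j \rightharpoonup 0$ weakly in $L^p$; moreover every $w_j$ is supported in $\overline{B_R}$, and because $p>3$ forces $L^p(B_R)\hookrightarrow L^1(B_R)$, the sequence is bounded in both $L^p$ and $L^1$. Writing $\nabla B w_j(x)=-\int (x-y)|x-y|^{-3}w_j(y)\,dy$, I would control $\|\nabla B w_j\|_{L^2(\mathbb{R}^3)}$ by splitting space into a far region $\{|x|>L\}$ and a large ball $B_L$, and on $B_L$ splitting the defining integral into a near part ($|x-y|<\delta$) and a far part ($|x-y|\ge\delta$). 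The three resulting pieces are to be made small by choosing, in this order, $L$ large, then $\delta$ small, then $j$ large.

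For the tail, the compact support gives, for $|x|>2R$ and $y\in B_R$, the estimate $|x-y|\ge |x|/2$, hence $|\nabla B w_j(x)|\le C\|w_j\|_1|x|^{-2}$; integrating, $\int_{|x|>L}|\nabla B w_j|^2\,dx\le C/L$ uniformly in $j$. For the near part I would use H\"older with exponent $p$: $\int_{|x-y|<\delta}|x-y|^{-2}|w_j(y)|\,dy\le \|w_j\|_p\big(\int_{|z|<\delta}|z|^{-2p'}\,dz\big)^{1/p'}=C\|w_j\|_p\,\delta^{(p-3)/p}$, where $p'=p/(p-1)$. Here the hypothesis $p>3$ is exactly what makes $2p'<3$, so the inner integral converges and the exponent $(p-3)/p$ is positive; thus the near part is bounded in $L^\infty(B_L)$ by $C\delta^{(p-3)/p}$ uniformly in $j$, and its $L^2(B_L)$ norm is small once $\delta$ is small.

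It remains to treat the far part on $B_L$ for fixed $\delta$. For each fixed $x$, the function $y\mapsto \chi_{B_R}(y)\,(x-y)|x-y|^{-3}\chi_{\{|x-y|\ge\delta\}}$ is bounded (by $\delta^{-2}$) with compact support, hence lies in $L^{p'}(\mathbb{R}^3)$; since $w_j\rightharpoonup 0$ weakly in $L^p$ and $w_j$ is supported in $B_R$, the far part tends to $0$ pointwise in $x$. As it is also dominated by the constant $C\delta^{-2}\|w_j\|_1\le C'\delta^{-2}$ on the bounded set $B_L$, the dominated convergence theorem yields $\int_{B_L}|\text{far part}|^2\,dx\to 0$ as $j\to\infty$, with $\delta,L$ fixed. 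Combining the three estimates by an $\varepsilon/3$ argument (first fix $L$, then $\delta$, then let $j\to\infty$) gives $\limsup_j\|\nabla B w_j\|_2^2\le\varepsilon$ for every $\varepsilon>0$, which is the claim.

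The main obstacle is the interplay of the quantifiers: the near-part and tail estimates must be uniform in $j$ and are governed by $\delta$ and $L$, whereas the far-part estimate exploits weak convergence and only becomes effective after $\delta$ and $L$ have been frozen. The genuinely essential hypothesis is $p>3$, which alone guarantees that the singular kernel $|x-y|^{-2}$ is integrable against $L^p$ functions near the diagonal, and hence that the near part is uniformly small; without it the splitting collapses. One could alternatively invoke Proposition \ref{prop2}, which gives $\|\nabla B w_j\|_2^2=|\int w_j B w_j|$, and try to show $B w_j\to 0$ strongly in $L^{p'}$, but establishing that strong convergence requires the same near/far splitting, so I would prefer the direct estimate above.
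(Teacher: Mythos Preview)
Your argument is correct. The paper itself does not prove this proposition; its entire proof is the sentence ``See \cite[Lemma~3.7]{Rein2001} and \cite[Lemma~3.7]{Luo2008}.'' Those references obtain the compactness by invoking elliptic regularity for the Newtonian potential (so that $B$ maps $L^p$ on a bounded set continuously into $W^{2,p}_{\mathrm{loc}}$, hence into $C^1$ when $p>3$) together with a Rellich-type compact embedding, and then treat the tail at infinity as you do. Your near/far splitting of the singular kernel is a more elementary realization of the same mechanism: you replace the abstract compact embedding by an explicit $\delta$-parameter and a dominated-convergence step, at the price of having to order the quantifiers $L,\delta,j$ carefully by hand. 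Both routes use $p>3$ in the same essential way---it is exactly what makes $|z|^{-2}\in L^{p'}_{\mathrm{loc}}$, equivalently what embeds $W^{2,p}$ into $C^1$.
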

\begin{proof}
See \cite[Lemma~3.7]{Rein2001} and \cite[Lemma~3.7]{Luo2008}.
\end{proof}

\subsection{Variational Solutions}
Given $\rho\in W^b$, choose $\epsilon>0$ small enough such that $\Omega_{\epsilon}(\rho)=\{x| \ \epsilon<  \rho(x)<\epsilon^{-1}\}$ has positive measure. Define
\begin{equation}
\label{P_0}
\begin{aligned}
P_{\epsilon}(\rho)&= \{\sigma:\ \mathbb{R}^3\rightarrow\mathbb{R}|\ \sigma\in L^{\infty}(\mathbb{R}^3);\ \sigma(x)=\sigma(r_b(x)) ; \\
&\qquad \sigma=0 \ \text{a.e.} \ \text{outside}\ \Big(\Omega_\epsilon(\rho) \cup \{x|\ \rho(x)=0 \}\Big); \\
&\qquad \sigma(x)\geq 0 \ \text{if} \ \rho(x)=0;\ \sigma(x)=0\ \text{if}\ \eta(x)<\epsilon \ \text{or}\ |x|>\epsilon^{-1} \},  \\  
P_0(\rho)&=\bigcup\limits_{\epsilon>0}P_{\epsilon}(\rho). 
\end{aligned}
\end{equation}

\begin{lem}
\label{lem1}
Suppose that $\rho\in W^b$ and $P_0(\rho)$ is defined as above. Then 
\begin{equation}
\lim\limits_{t\rightarrow0^+}\frac{E_b(\rho+t\sigma)-E_b(\sigma)}{t}=\int E'_b(\rho(x))\sigma(x) \ dx \quad \text{for} \ \forall\ \sigma \in P_0(\rho) .
\end{equation}
\end{lem}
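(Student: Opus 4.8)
The plan is to compute the one-sided Gâteaux derivative of $E_b$ at $\rho$ in the direction $\sigma\in P_0(\rho)$ by differentiating each of the three terms in \eqref{energy} separately, verifying in each case that the difference quotient converges as $t\to 0^+$ and that the limits assemble into the claimed formula \eqref{eprime}. First I would fix $\sigma\in P_\epsilon(\rho)$ for some $\epsilon>0$, so that $\sigma$ is bounded, supported where $\epsilon<\rho<\epsilon^{-1}$ or $\rho=0$, vanishing near the $x_3$-axis ($\eta<\epsilon$) and outside $|x|>\epsilon^{-1}$. The support condition guarantees that $\rho+t\sigma\geq 0$ for all small $t>0$ (where $\rho=0$ we have $\sigma\geq 0$, and where $\rho\geq\epsilon$ a small perturbation stays nonnegative), and that the perturbation lives on a set of finite measure where $\rho$ is bounded above and below, which is what makes all the limits tractable.

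\textbf{Term-by-term differentiation.} The gravitational term is the easiest: since $B$ is a symmetric linear operator, $\frac{1}{2}\int(\rho+t\sigma)B(\rho+t\sigma)=\frac{1}{2}\int\rho B\rho+t\int\sigma B\rho+\tfrac{t^2}{2}\int\sigma B\sigma$, so the difference quotient converges to $\int\sigma B\rho\,dx$, contributing $-B\rho(x)$ to $E_b'$. The rotational (angular momentum) term is handled exactly as in \cite{Auchmuty1971}: writing it via $m_\rho$ and using $L'\geq 0$ together with the change in the mass distribution function, one obtains the contribution $\int_{\eta(x)}^\infty L(m_\rho(s))s^{-3}\,ds$; I would cite Auchmuty--Beals for this computation since the kinetic term is formally identical. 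The genuinely new work is the internal energy term $\int A(\rho)T(n_{b,\rho})\,dx$, where \emph{both} factors depend on $\rho$: the density argument of $A$ and, through $n_{b,\rho}(r_b(x))=\int_{r_b(y)\leq r_b(x)}\rho(y)\,dy$, the mass argument of $T$. Differentiating the product gives two pieces. The first, from varying $A$, yields $A'(\rho(x))T(n_{b,\rho}(r_b(x)))$ after passing the limit inside the integral (justified by the mean value theorem applied to $A$, the bound $T\leq T_0$ from \eqref{conditionT}, and dominated convergence on the finite-measure, bounded-$\rho$ support of $\sigma$). The second, from varying $T$ through $n$, requires a Fubini-type rearrangement: the perturbation of $n_{b,\rho}$ at the point $x$ is $t\int_{r_b(y)\leq r_b(x)}\sigma(y)\,dy$, and after swapping the order of integration the total contribution becomes $\int_{r_b(y)>r_b(x)}A(\rho(y))\,T'(n_{b,\rho}(r_b(y)))\,dy$, which is precisely the second term of \eqref{eprime}.

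\textbf{Key estimates and the main obstacle.} To make the product-rule limit rigorous I would write the internal-energy difference quotient as a sum of two terms, $\frac{A(\rho+t\sigma)-A(\rho)}{t}\,T(n_{b,\rho+t\sigma})+A(\rho)\,\frac{T(n_{b,\rho+t\sigma})-T(n_{b,\rho})}{t}$, and bound each integrand uniformly in small $t$. For the first term the mean value theorem gives $A'$ evaluated at an intermediate density, uniformly bounded since $\rho$ is trapped in $[\epsilon,\epsilon^{-1}]$ on the support; for the second, $T$ is $C^1$ with $|T'|\leq T_0$ on $[0,M]$, and $n_{b,\rho+t\sigma}-n_{b,\rho}=t\int\sigma$ is $O(t)$ uniformly, so the difference quotient is controlled and converges pointwise to $A(\rho)\,T'(n_{b,\rho})\cdot(\text{mass below }r_b(x))$. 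Dominated convergence then applies because everything is supported on a fixed set of finite measure with $\rho\in L^\infty$ there, hence $A(\rho)\in L^1$. \textbf{The hard part will be} the Fubini interchange that converts the $T'$-contribution from an integral-against-the-inner-mass-perturbation into the clean tail integral $\int_{r_b(y)>r_b(x)}A(\rho(y))T'(\cdots)\,dy$: one must confirm the double integral is absolutely convergent so the order of integration may be swapped, which again follows from the finite support of $\sigma$ and the uniform bounds on $A(\rho)$ and $T'$. Finally, since $P_0(\rho)=\bigcup_\epsilon P_\epsilon(\rho)$, any $\sigma\in P_0(\rho)$ lies in some $P_\epsilon(\rho)$, so the identity for fixed $\epsilon$ immediately yields the stated result for all of $P_0(\rho)$.
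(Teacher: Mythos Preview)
Your approach mirrors the paper's exactly: cite Auchmuty--Beals for the kinetic and gravitational terms, then for the new internal-energy term split the difference quotient as
\[
\frac{A(\rho+t\sigma)-A(\rho)}{t}\,T(n_{b,\rho+t\sigma})\;+\;A(\rho)\,\frac{T(n_{b,\rho+t\sigma})-T(n_{b,\rho})}{t},
\]
apply the mean value theorem, pass to the limit by dominated convergence, and rewrite the second limit via Fubini as the tail integral $\int_{r_b(y)>r_b(x)}A(\rho(y))T'(n_{b,\rho}(r_b(y)))\,dy$.

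One correction is needed in your domination argument for the second summand. You claim that ``everything is supported on a fixed set of finite measure with $\rho\in L^\infty$ there, hence $A(\rho)\in L^1$,'' but this is false here: $n_{b,\rho+t\sigma}(r_b(x))-n_{b,\rho}(r_b(x))=t\,n_{b,\sigma}(r_b(x))$ is in general nonzero for \emph{every} $x$ (once $r_b(x)$ lies beyond the support of $\sigma$ it equals $t\int\sigma$), so $A(\rho)\cdot[\ldots]$ is \emph{not} compactly supported and $\rho$ is not bounded on its support. The correct dominating function is $T_0\,\|\sigma\|_1\,A(\rho)$, which is integrable on all of $\mathbb{R}^3$ because $\rho\in W^b$ already forces $\int A(\rho)T(n_{b,\rho})<\infty$ and $T\geq T_1>0$; this is precisely the bound the paper uses. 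The same global integrability of $A(\rho)$ (together with $\sigma\in L^1$) is what justifies the Fubini interchange, not any uniform pointwise bound on $A(\rho)$.
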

\begin{proof}
Since the last two integrals in $\widehat{E}_1(\rho)$ and $E_b(\rho)$ are the same, 
the proof of \cite[Lemma~1]{Auchmuty1971} implies that
it remains to prove that 
for any $ \epsilon >0$ and any $ \sigma \in P_{\epsilon}(\rho)$,
\begin{equation}
\label{claim1}
\begin{aligned}
\lim\limits_{t\rightarrow0^+}& t^{-1}\Big[ \int A(\rho+t\sigma)T(n_{b,\rho+t\sigma}(r_b))-\int A(\rho)T(n_{b,\rho}(r_b))\Big]=\\
& \int \Big[A'(\rho(x))T(n_{b,\rho}(r_b(x)))+\int_{r_b(y)>r_b(x)}A(\rho(y))T'(n_{b,\rho}(r_b(y)))\ dy \Big]\sigma(x)\ dx
\end{aligned}
\end{equation}

Actually, noting that $A$ and $T$ are both continuously differentiable, it follows from the Mean Value Theorem that
\begin{align*}
I&:=t^{-1}\Big[ \int A(\rho+t\sigma)T(n_{b,\rho+t\sigma}(r_b))-\int A(\rho)T(b,n_\rho(r_b))\Big]\\
&=\int \left[ \frac{A(\rho+t\sigma)-A(\rho)}{t}T(n_{b,\rho+t\sigma}(r_b)) +A(\rho)\frac{T(n_{b,\rho+t\sigma}(r_b))-T(n_{b,\rho}(r_b))}{t} \right]\\
&=\int A'(\rho+\theta \sigma)\sigma T(n_{b,\rho+t\sigma}(r_b))+\int A(\rho) T'(n_{b,\rho+\theta'\sigma}(r_b))n_{b,\sigma}(r_b),
\end{align*}
where $\theta$, $\theta'$ are functions of $x$, and $0<\theta,\theta'<t$. 
Since $A'$ is stricty increasing,  $|\sigma(x)|\leq k$ for some constant $k$, and $\sigma(x)=0$ if $\rho(x)>\epsilon^{-1}$ or $|x|>\epsilon^{-1}$, with $T$ satisfying the condition \eqref{conditionT}, it holds that 
\begin{align*}
|A'(\rho+\theta \sigma)\sigma T(b,n_{\rho+t\sigma}(r_b))|\leq A'(\epsilon^{-1}+t\|\sigma\|_{\infty})\|\sigma\|_{\infty}T_0\\
|A(\rho) T'(n_{b,\rho+\theta'\sigma}(r_b))n_{b,\sigma}(r_b)|\leq T_0M A(\rho).
\end{align*}
Since $\sigma$ is compactly supported, $A'(\epsilon^{-1}+t\|\sigma\|_{\infty})\|\sigma\|_{\infty}T_0$ is integrable. Thus, it follows from the Lebesgue's Dominated Convergence Theorem that
$$I\rightarrow \int A'(\rho)T(n_{b,\rho}(r_b))\sigma+\int A(\rho)T'(n_{b,\rho}(r_b))n_{b,\sigma}(r_b)=: I_1+I_2 \quad \text{as}\ t\rightarrow 0^+.$$

Let 
$$\tilde{\rho}(s)=\int_{r_b(y)=s} \rho(y) \ dS_y,$$
and then the fact that $\rho(x)=\rho(r_b(x))$ and $\sigma(x)=\sigma(r_b(x))$ implies the following equality
\begin{equation*}
\begin{aligned}
I_2&=\int_{0}^{\infty}\widetilde{A(\rho)}(r_b)T'(n_{b,\rho}(r_b))\int_{0}^{r_b}\tilde{\sigma}(s) \ ds \ dr_b\\
&=\int_{0}^{\infty}\tilde{\sigma}(s)\int_{s}^{\infty}\widetilde{A(\rho)}(r_b)T'(n_{b,\rho}(r_b))\ dr_b\ ds\\
&=\int \Big[\int_{r_b(y)>r_b(x)}A(\rho(y))T'\Big(n_{b,\rho}(r_b(y))\Big)\ dy \Big]\sigma(x) \ dx.
\end{aligned}
\end{equation*}
Hence, \eqref{claim1} is verified.

Therefore, this lemma is proved.
\end{proof}

\begin{lem}
\label{lem2}
Suppose that $\rho$ minimizes $E_b$ on $W^b$. Then there is a constant $\lambda$ such that
\begin{eqnarray}
&& E'_b(\rho)\geq \lambda \qquad \text{a.e.}, \label{eprime1}\\
&& E'_b(\rho)=\lambda \qquad \text{a.e.}, \ \text{where}\ \rho(x)>0. \label{eprime2}
\end{eqnarray}
\end{lem}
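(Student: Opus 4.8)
The plan is to read \eqref{eprime1}--\eqref{eprime2} as the Euler--Lagrange conditions for the constrained minimizer $\rho$, exploiting the one-sided Gâteaux derivative supplied by Lemma \ref{lem1} together with a mass-compensating variation that enforces $\int\rho=M$ and simultaneously produces the multiplier $\lambda$. First I would fix, for some $\epsilon_0>0$ small enough that $\Omega_{\epsilon_0}(\rho)$ has positive measure, a reference function $g\in P_{\epsilon_0}(\rho)$ with $g\geq 0$, with $\mathrm{supp}\,g\subseteq\Omega_{\epsilon_0}(\rho)$ (so $\rho>0$ on $\mathrm{supp}\,g$), and with $\int g=1$; such a $g$ exists since one may take the characteristic function of a suitable ellipsoidal shell contained in $\Omega_{\epsilon_0}(\rho)$. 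This $g$ will absorb the mass generated by an arbitrary test perturbation.

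Given any $\sigma\in P_0(\rho)$, say $\sigma\in P_\epsilon(\rho)$, I would shrink $\epsilon$ if necessary so that both $\sigma$ and $g$ lie in $P_\epsilon(\rho)$ (possible because $P_\epsilon(\rho)\subseteq P_{\epsilon'}(\rho)$ whenever $\epsilon'\leq\epsilon$), and set $\tau=\sigma-(\int\sigma)\,g$. Then $\tau\in P_0(\rho)$: it is bounded, ellipsoidally symmetric, vanishes near the axis and at infinity, and on $\{\rho=0\}$ it equals $\sigma\geq 0$ since $g$ is supported where $\rho>0$. By construction $\int\tau=0$, so $\rho_t:=\rho+t\tau$ has mass $M$, and for small $t>0$ one has $\rho_t\geq 0$ (on $\{\rho=0\}$ this is $t\sigma\geq 0$, and elsewhere $\rho$ is bounded away from $0$ on the relevant support). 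Hence $\rho_t\in W^b$, minimality of $\rho$ gives $t^{-1}(E_b(\rho_t)-E_b(\rho))\geq 0$, and passing to the limit via Lemma \ref{lem1} applied to $\tau$ yields $\int E_b'(\rho)\,\tau\geq 0$, that is $\int\big(E_b'(\rho)-\lambda\big)\sigma\geq 0$, where $\lambda:=\int E_b'(\rho)\,g$ is a constant independent of $\sigma$.

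From this single variational inequality the two pointwise claims follow by playing with the sign of $\sigma$. On $\{\rho=0\}$ only nonnegative $\sigma$ are admissible, so testing against arbitrary nonnegative, compactly supported, ellipsoidally symmetric $\sigma$ forces $E_b'(\rho)\geq\lambda$ a.e.\ there. On $\Omega_\epsilon(\rho)$, where $\rho>0$, both $\sigma$ and $-\sigma$ are admissible, so the inequality holds with both signs and therefore $E_b'(\rho)=\lambda$ a.e.\ on $\Omega_\epsilon(\rho)\cap\{\eta\geq\epsilon,\ |x|\leq\epsilon^{-1}\}$; letting $\epsilon\downarrow 0$ and discarding the measure-zero axis gives \eqref{eprime2}, and combining with the estimate on $\{\rho=0\}$ gives \eqref{eprime1}.

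The main obstacle is the construction and admissibility of the compensating variation rather than any delicate estimate: one must check that $\tau$ stays inside the admissible cone $P_0(\rho)$ (the ellipsoidal symmetry and the sign condition on $\{\rho=0\}$ are the sensitive points) and that $\rho_t$ remains nonnegative with exactly conserved mass for all small $t>0$. A secondary subtlety is that Lemma \ref{lem1} supplies only a right-hand derivative, so the equality on $\{\rho>0\}$ must be obtained by using $+\sigma$ and $-\sigma$ as two separate one-sided variations; the well-definedness of the single multiplier $\lambda$ then rests on the linearity of $\sigma\mapsto\int E_b'(\rho)\,\sigma$ guaranteed by Lemma \ref{lem1}. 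Since the last two integrals of $E_b$ coincide with those of $\widehat{E}_1$, this is precisely the scheme of \cite{Auchmuty1971}, now executed within the ellipsoidally symmetric class.
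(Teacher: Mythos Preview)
Your proposal is correct and is precisely the Auchmuty--Beals Lagrange-multiplier argument that the paper invokes by citation: fix a nonnegative reference $g\in P_{\epsilon_0}(\rho)$ with unit mass supported where $\rho>0$, form the mass-neutral perturbation $\tau=\sigma-(\int\sigma)g$, use the one-sided derivative of Lemma~\ref{lem1} to obtain $\int(E_b'(\rho)-\lambda)\sigma\geq 0$ with $\lambda=\int E_b'(\rho)\,g$, and extract \eqref{eprime1}--\eqref{eprime2} by choosing signs for $\sigma$. The admissibility checks you flag (ellipsoidal symmetry, the sign condition on $\{\rho=0\}$, nonnegativity of $\rho_t$, and the nesting $P_\epsilon\subseteq P_{\epsilon'}$ for $\epsilon'\le\epsilon$) are the right ones and go through as you describe.
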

\begin{proof}
See \cite[Lemma~2]{Auchmuty1971}
\end{proof}

 \cref{lem1} and  \cref{lem2} show the Euler-Lagrange equations of the minimizers of $E_b$ on $W^b$.
 \cref{lem3} will provide an inequality that is the key to run the bootstrap arguments to improve the regularities of the minimizers of $E_b$ on $W^b$.

\begin{lem}
\label{lem3}
Suppose that $\rho$ minimizes $E_b$ on $W^b$. Then there exist constants $c_1$ and $K$ such that 
\begin{equation}
\label{ineq3}
\rho^{1/3}\leq c_1 B\rho
\end{equation}
a.e. on the set where $\rho\geq K$. 
\end{lem}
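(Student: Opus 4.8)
The plan is to use the Euler--Lagrange equations from Lemma~\ref{lem2} to get pointwise control of $B\rho$ from below in terms of $\rho^{1/3}$, exploiting that on the support of $\rho$ the potential function $E_b'(\rho)$ equals the constant $\lambda$. Rewriting \eqref{eprime2}, on the set where $\rho(x)>0$ we have
\begin{equation*}
B\rho(x)=A'(\rho(x))T\big(n_{b,\rho}(r_b(x))\big)+\int_{r_b(y)>r_b(x)}A(\rho(y))T'\big(n_{b,\rho}(r_b(y))\big)\ dy+\int_{\eta(x)}^{\infty}L(m_{\rho}(s))s^{-3}\ ds-\lambda.
\end{equation*}
The term $\int_{\eta(x)}^{\infty}L(m_\rho(s))s^{-3}\,ds$ is nonnegative by (A3), so it can be dropped from a lower bound on $B\rho$, and the leading behavior should come from the $A'(\rho)T(\cdots)$ term.

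First I would analyze the asymptotic growth of $A'$. From the definition \eqref{A}, $A'(s)=\int_0^s f(t)t^{-2}\,dt + f(s)s^{-1}$, and using (A1) (namely $f(s)s^{-4/3}\to\infty$ as $s\to\infty$) one shows that $A'(s)s^{-1/3}\to\infty$ as $s\to\infty$; equivalently $\rho^{1/3}=o(A'(\rho))$ for large $\rho$. This means there is a threshold $K_0$ and a constant so that $\rho^{1/3}\le \tfrac{1}{2}T_1^{-1}A'(\rho)T(\cdots)$ once $\rho\ge K_0$, where $T_1$ is the lower bound from \eqref{conditionT}. The point is that the dominant $A'(\rho)T$ term beats $\rho^{1/3}$ for large density.

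The remaining work is to absorb the two lower-order pieces $-\lambda$ and the entropy-gradient integral $\int_{r_b(y)>r_b(x)}A(\rho(y))T'(\cdots)\,dy$. The constant $-\lambda$ is harmless: it is a fixed number, so for $\rho\ge K$ with $K$ large enough the $A'(\rho)T$ term dominates both it and $\rho^{1/3}$ simultaneously. The entropy integral is the genuinely delicate term, since $T'$ may be of either sign; but it is bounded in absolute value by $\int A(\rho(y))|T'|\,dy\le T_0\int A(\rho)\,dy$, which is a \emph{finite constant} because $\rho\in W^b$ forces the internal-energy integral $\int A(\rho)T\,dx$ to be finite and $T\ge T_1>0$. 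Thus both stray terms are uniformly bounded constants, and choosing $K$ large enough that $A'(K)T_1$ exceeds $2K^{1/3}$ plus twice the sum of these constants gives, on $\{\rho\ge K\}$,
\begin{equation*}
B\rho(x)\ \ge\ \tfrac{1}{2}A'(\rho(x))T_1\ \ge\ \rho(x)^{1/3},
\end{equation*}
which is \eqref{ineq3} with $c_1=1$ (or any adjusted constant). The main obstacle I anticipate is rigorously handling the entropy integral's sign and confirming its finiteness uniformly; everything else reduces to the superlinear growth of $A'$ relative to $\rho^{1/3}$, which follows cleanly from (A1).
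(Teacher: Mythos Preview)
Your proposal is correct and follows essentially the same route as the paper's proof: rearrange the Euler--Lagrange identity \eqref{eprime2}, drop the nonnegative $L$-integral, bound the entropy-gradient term by $T_0\int A(\rho)$ (a fixed finite constant since $\rho\in W^b$ and $T\ge T_1$), and then invoke (A1) to get $A'(s)/s^{1/3}\to\infty$ so that for $\rho$ large the $T_1A'(\rho)$ term dominates all constants and yields $B\rho\ge c\,\rho^{1/3}$. The paper organizes the same inequality by isolating $T_1A'(\rho)$ on the left rather than $B\rho$ on the right, but the content is identical.
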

\begin{proof}
It follows from the definition of $A$ and the assumptions on $f$ (A1) that 
for any $c_2$, $A'(s)\geq c_2 s^{1/3}$ for large $s$. 
By \eqref{eprime2} and the condition of $T$ \eqref{conditionT}, when $\rho>0$,
it holds that
\begin{align*}
& T_1A'(\rho)\leq A'(\rho)T(n_{b,\rho}(r_b))\\
\leq& -\int_{r_b(y)>r_b(x)}A(\rho(y))T'(n_{b,\rho}(r_b(y))) dy+ B\rho+\lambda\\
\leq& T_0 \int A(\rho(y))\ dy+B\rho+\lambda.
\end{align*}
Then inequality \eqref{ineq3} follows by choosing $c_1>c_2$ and $K$ large enough.
\end{proof}
\begin{remark}
The constants $K$ and $c_1$ in  \cref{lem3} depend on $\rho$.
\end{remark}

\begin{proof}[\textbf{Proof of  \cref{thm1}}]
Since $T$ is continuously differentiable, then it follows from the proof of \cite[Theorem~1]{Auchmuty1971} that 
$\rho\in C(\mathbb{R}^3)$ and is continuous differentiable where it is positive. 
When $\rho$ is positive, taking the gradient in \eqref{eprime2} gives that
\begin{equation*}
\begin{aligned}
& A''(\rho)T(n_{b,\rho}(r_b))\nabla \rho+A'(\rho)T'(n_{b,\rho}(r_b))\left(\int_{r_b(y)=r_b}\rho\right)\nabla r_b\\
&\qquad -\left(\int_{r_b(y)=r_b}A(\rho)
T'(n_{b,\rho}(r_b))\right)\nabla r_b=\cdots+\nabla B\rho,
\end{aligned}
\end{equation*}
where ``$\cdots$'' are terms obtained from the third term in \eqref{eprime}, which can be handled in the same way as that in \cite{Auchmuty1971} and thus are omitted here.
Note that $\rho(x)=\rho(r_b(x))$ for any $\rho\in W^b$, and hence 
\begin{equation}
\label{symmetry}
\frac{\int_{r_b(y)=r_b}A(\rho)}{\int_{r_b(y)=r_b}\rho}=\frac{A(\rho)}{\rho}.
\end{equation}
By the definition of $A$ \eqref{A}, $A''(\rho)=\frac{f'(\rho)}{\rho}$ and $A'(\rho)-\frac{A(\rho)}{\rho}=\frac{f(\rho)}{\rho}$.
So it follows that 
\begin{equation*}
\frac{f'(\rho)}{\rho}T(n_{b,\rho}(r_b))\nabla\rho+\frac{f(\rho)}{\rho}T'(n_{b,\rho}(r_b))\left(\int_{r_b(y)=r_b}\rho\right)\nabla r_b=\cdots+\nabla B\rho
\end{equation*}
Multiplying it with $\rho$ gives \eqref{sss}.
Therefore, the proof is finished.
\end{proof}

\section{Existence of Rotating Solutions} 
In this section, the existence of the compactly supported minimizers of the energy functional $E_b$ on $W^b$ is established.
We first prove the existence of the minimizers on $W_{R}^b$  \eqref{W,R}.
Then it should be pointed out that the constrained minimizers $\rho_R$ on $W_{R}^b$ actually form a subsequence $\{\rho_{R_j}\}_{j\in\mathbb{N}^*}$ 
which minimize the energy functional $E_b$ on $W^b$ and has a weak limit $\rho_0$ in $L^{4/3}(\mathbb{R}^3)$.
At last, the uniform estimates of $\{\rho_{R_j}\}_{j\in\mathbb{N}^*}$ show that $\rho_0$ is the compactly supported energy minimizer on $W^b$.

\subsection{Existence of Constrained Minimizers}
Given $R>0$, denote
\begin{equation}
\label{W,R}
\begin{aligned}
W^b_{R}&=\{ \rho: \mathbb{R}^3\rightarrow \mathbb{R} | \; \rho(x)\geq0\ \text{a.e.};\ \int\rho(x)\ dx=M;\ \rho(x)=\rho(r_b(x));\\
& \
\rho(x)=0 \ \text{if}\ |x|\geq R;\ \rho(x) \leq R \ \text{a.e.}    \}.
\end{aligned}
\end{equation}

\begin{lem}
\label{lem4}
For any $R$ large enough, there exists a $\rho_R\in W^b_{R}$ which minimizes $E_b$ on $W^b_{R}$. And such $\rho_R$ is continuous if $|x|<R$. Furthermore, there exists a constant $\lambda_R$ such that 
\begin{eqnarray}
&& E_b'(\rho_R)\geq \lambda_R \qquad \text{a.e.},\ \text{where}\ |x|<R,\ \rho_R(x)<R \label{eprime1r}\\
&& E_b'(\rho_R)=\lambda_R \qquad \text{a.e.}, \ \text{where}\ |x|<R,\ 0<\rho_R(x)<R. \label{eprime2r}
\end{eqnarray}
\end{lem}
\begin{proof}
Since the last two integrals in $\widehat{E}_1(\rho)$ and $E_b(\rho)$ are the same, 
the proof of \cite[Theorem~$B_R$]{Auchmuty1971} implies that it suffices to prove that $\int A(\rho) T(n_{b,\rho})$ is weakly lower semi-continuous in $W^b_{R}$. Here $W^b_{R}$ is a bounded, closed and convex subset of $L^p$, $1< p\leq \infty$ and thus it is also weakly closed and weakly compact(see \cite[Theorem~6,~Chapter~10]{Lax}) .

Suppose that $\{\rho_n\}_{n=1}^{\infty}, \rho_0 \in W^b_{ R}$ and $\rho_n$ converges weakly to $\rho_0$ in $L^{p}$.
Since $\{\rho\in W^b_{ R} | \int A(\rho)T(n_{b,\rho_0}) \leq k \}$ is convex and closed, 
then it is weakly closed . 
Therefore, $\liminf\limits_{n\rightarrow \infty}\int A(\rho_n)T(n_{b,\rho_0})\geq\int A(\rho_0)T(n_{b,\rho_0})$.

Since $n_{b,\rho_n}\rightarrow n_{b, \rho_0}$ a.e. and these functions are continuous, monotonically increasing in $\mathbb{R}$ and bounded uniformly, 
then $n_{b,\rho_n}\rightarrow n_{b, \rho_0}$ uniformly on compact subsets of $[0,\infty)$ (see \cite[Problem~13,~p.167]{Rudin}).
Therefore, $T(n_{b, \rho_n})\rightarrow T(n_{b, \rho_0})$ uniformly.

So for any $\epsilon >0$ , there exists $N(\epsilon)$ such that $\forall n>N$ 
\begin{equation*}
\begin{aligned}
&\inf\limits_{j>n} \int A(\rho_j)T(\rho_0) +\frac{\epsilon}{2}\geq \int A(\rho_0)T(\rho_0),\\
&\left| \int A(\rho_n)T(\rho_n)- \int A(\rho_n)T(\rho_0) \right|
\leq A(R) \int |T(\rho_n)-T(\rho_0)|<\frac{\epsilon}{2}.
\end{aligned}
\end{equation*}
Then 
\begin{equation*}
\begin{aligned}
\inf\limits_{j>n} \int A(\rho_j)T(\rho_n) &\geq \inf\limits_{j>n} \int A(\rho_j)T(\rho_0) -\sup\limits_{j>n} \left| \int A(\rho_j)T(\rho_0)- \int A(\rho_j)T(\rho_j) \right|\\
&\geq A(\rho_0)T(\rho_0)-\epsilon.
\end{aligned}
\end{equation*}
This inequality gives $\liminf\limits_{n\rightarrow \infty}\int A(\rho_n)T(n_{b,\rho_n})\geq\int A(\rho_0)T(n_{b,\rho_0})$.
Hence a minimizer $\rho_R$ on $W_{R}^b$ exists. 
The rest of the proof can be completed by following  \cref{lem2} and the proof of  \cref{thm1}.
\end{proof}

 \cref{lem4} corresponds to  \cref{lem1},  \cref{lem2} and  \cref{lem3} for the minimizers of $E_b$ on $W^b_{R}$.
The Euler-Lagrange equations provided by  \cref{lem4} will be used throughout the remaining proof of the paper.
\begin{lem}
\label{lem5}
There exist $k_0>0$ and $\pi_1>0$ such that $\|\rho_{R}\|_{4/3}\leq k_0$, when $R\geq\pi_1$. 
\end{lem}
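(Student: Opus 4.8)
The plan is to establish a uniform bound on $\|\rho_R\|_{4/3}$ by exploiting that $\rho_R$ minimizes $E_b$ on $W^b_R$, so in particular $E_b(\rho_R)$ is bounded above by the energy of any fixed competitor in $W^b_R$ (for instance a fixed normalized bump supported in a small ball, which lies in $W^b_R$ for all large $R$). This gives $E_b(\rho_R)\leq C_0$ uniformly in $R$. The strategy is then to turn this upper bound on the energy into a lower bound that controls $\int A(\rho_R) T(n_{b,\rho_R})$, and hence $\|\rho_R\|_{4/3}$, by absorbing the gravitational term using Proposition \ref{prop2}.

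Concretely, I would first use the lower bounds on the constituent pieces of $E_b$. The kinetic term $\frac12\int \rho_R L(m_{\rho_R}(\eta))\eta^{-2}$ is nonnegative by (A3). For the internal energy term, condition (A1) ($\lim_{s\to\infty} f(s)s^{-4/3}=\infty$) together with the definition \eqref{A} of $A$ forces $A(s)\geq c\, s^{4/3}-C$ for suitable constants, and the temperature lower bound $T(n)\geq T_1$ from \eqref{conditionT} gives
\begin{equation*}
\int A(\rho_R) T(n_{b,\rho_R})\ \geq\ T_1\Big(c\int \rho_R^{4/3}-C_1\Big)=c\,T_1\|\rho_R\|_{4/3}^{4/3}-C_2,
\end{equation*}
where the constant $C_1$ comes from the region where $\rho_R$ is small and uses $\int\rho_R=M$. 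For the gravitational term I would apply Proposition \ref{prop2}, which yields
\begin{equation*}
\tfrac12\Big|\int \rho_R B\rho_R\Big|\ \leq\ C\int\rho_R^{4/3}\Big(\int\rho_R\Big)^{2/3}=C\,M^{2/3}\|\rho_R\|_{4/3}^{4/3}.
\end{equation*}

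Combining these in $E_b(\rho_R)\leq C_0$ gives an inequality of the shape
\begin{equation*}
c\,T_1\|\rho_R\|_{4/3}^{4/3}-C\,M^{2/3}\|\rho_R\|_{4/3}^{4/3}\ \leq\ C_0+C_2.
\end{equation*}
The main obstacle, and the step requiring the most care, is ensuring the coefficient on the left is positive, i.e. that the internal-energy growth genuinely dominates the gravitational attraction. This is exactly the role played by the superlinear growth $f(s)s^{-4/3}\to\infty$ in (A1): by choosing the threshold beyond which $A(s)\geq c s^{4/3}$ with $c$ as large as we like, we can force $c\,T_1>C\,M^{2/3}$, so the net coefficient is a positive constant $c_3$. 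This scaling freedom is available because the constant $C$ in Proposition \ref{prop2} and the mass $M$ are fixed, whereas $c$ is at our disposal through (A1). Then $c_3\|\rho_R\|_{4/3}^{4/3}\leq C_0+C_2$ gives $\|\rho_R\|_{4/3}\leq k_0$ with $k_0$ independent of $R$, valid once $R\geq\pi_1$ so that the fixed competitor lies in $W^b_R$ and the lower-bound inequality $A(s)\geq cs^{4/3}-C$ applies on the full support. I would note that this argument follows the corresponding estimate in \cite{Auchmuty1971}, the only new ingredient being the uniform temperature factor $T_1$ from \eqref{conditionT}, which merely rescales the constants.
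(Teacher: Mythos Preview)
Your argument is correct and follows essentially the same route as the paper. The paper writes the key inequality
\[
T_1\int A(\rho_R)\leq E_b(\rho_{\pi_0})+C\int \rho_R^{4/3}
\]
(using a fixed minimizer $\rho_{\pi_0}\in W^b_{\pi_0}$ as the competitor, Proposition~\ref{prop2}, and the bound $T\geq T_1$) and then defers to \cite[Lemma~4]{Auchmuty1971}; you have simply unpacked that lemma, correctly isolating the point that (A1) lets you take the constant $c$ in $A(s)\geq c\,s^{4/3}-C$ as large as needed so that $cT_1>C\,M^{2/3}$.

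Two small remarks. First, your fixed competitor must lie in $W^b_R$, which requires $\rho(x)=\rho(r_b(x))$; a bump that is a function of $r_b$ (or, as the paper does, $\rho_{\pi_0}$ itself) works, but a generic radial bump does not unless $b=1$. Second, the clause ``and the lower-bound inequality $A(s)\geq cs^{4/3}-C$ applies on the full support'' is superfluous: that inequality is pointwise and independent of $R$; the threshold $\pi_1$ is needed only so that the competitor belongs to $W^b_R$.
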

\begin{proof}
For any fixed $\pi_0>0$, we have $E_b(\rho_R)\leq E_b(\rho_{\pi_0})$, when $R\geq \pi_0$. 
\eqref{ineq2} and the condition of $T$ \eqref{conditionT} imply that
\begin{equation}
\label{ineq4}
T_1\int A(\rho_R)\leq E_b(\rho_{\pi_0})+C\int \rho_R^{4/3}.
\end{equation}
With the inequality, the rest of the proof follows easily from \cite[Lemma~4]{Auchmuty1971}
\end{proof}
 \cref{lem5} shows that $\rho_{R}$ has a uniform bound in $L^{4/3}$ norm.

\subsection{Uniform estimates of the constrained minimizers}
Since $\rho_R$ are uniformly bounded in $L^{4/3}(\mathbb{R}^3)$,
there exist a subsequence of $\rho_R$, i.e. $\{\rho_{R_j} \}_{j\in \mathbb{N}^*}\subset 
W^b$  and $\rho_0\in L^{4/3}$ such that 
\begin{equation}
\label{sequence}
\rho_{R_j} \rightharpoonup\rho_0 \ \text{weakly in } \ L^{4/3}.
\end{equation}
Since $W^b$ is a convex set, $\rho_0$ also satisfies $\rho_0(x)=\rho_0(r_b(x))$. (See \cite[Theorem~6,~Chapter~10]{Lax}).
The remaining part of this section is devoted to deriving the uniform estimates of their $L^{\infty}$ norm and radii.
We would not focus on the subsequence until Lem \ref{lem12}.

\begin{lem}
\label{lem6}
There exists $\pi_2>\pi_1$ such that $\lambda_R<C E_b(\rho_{\pi_0})+C k_0^{4/3}$ for all $R \geq \pi_2$.
\end{lem}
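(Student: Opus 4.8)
The plan is to bound $\lambda_R$ from above by evaluating the Euler--Lagrange inequality \eqref{eprime1r} at one carefully chosen point. Since \eqref{eprime1r} gives $E_b'(\rho_R)(x)\ge\lambda_R$ at every $x$ with $|x|<R$ and $\rho_R(x)<R$, it suffices to exhibit a single such point $x_*$ at which $E_b'(\rho_R)(x_*)$ is controlled by $CE_b(\rho_{\pi_0})+Ck_0^{4/3}$; then $\lambda_R\le E_b'(\rho_R)(x_*)$ closes the argument.

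First I would locate the evaluation point, and this is precisely where the threshold $\pi_2$ enters. I want $x_*$ with $\eta(x_*)\ge 1$, $|x_*|<R$, and $\rho_R(x_*)$ small. Since $\int\rho_R=M$ and $\rho_R\ge0$, the set $\{x:\rho_R(x)>\delta\}$ has measure at most $M/\delta$, whereas $|\{x:\eta(x)\ge1,\ |x|<R\}|\to\infty$ as $R\to\infty$; hence for all $R\ge\pi_2$ such a point with $\rho_R(x_*)\le\delta$ exists. Because $A'(0^+)=0$ --- indeed $A'(s)=\int_0^s f(t)t^{-2}\,dt+f(s)/s\to0$ as $s\to0$ by (A1) --- the leading term $A'(\rho_R(x_*))T(n_{b,\rho_R}(r_b(x_*)))$ is then bounded by a small constant via \eqref{conditionT}.

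Next I would estimate the other three terms of \eqref{eprime} at $x_*$. The gravitational term contributes $-B\rho_R(x_*)\le0$. The rotational term is estimated using that $L$ is nondecreasing by (A4) and $m_{\rho_R}\le M$, so
\[
\int_{\eta(x_*)}^{\infty}L(m_{\rho_R}(s))s^{-3}\,ds\le L(M)\int_{1}^{\infty}s^{-3}\,ds=\tfrac12 L(M).
\]
The genuinely non-isentropic term is handled crudely through $|T'|\le T_0$ from \eqref{conditionT}, giving
\[
\int_{r_b(y)>r_b(x_*)}A(\rho_R(y))T'\bigl(n_{b,\rho_R}(r_b(y))\bigr)\,dy\le T_0\int A(\rho_R).
\]
Finally I would control $\int A(\rho_R)$ via the energy: from $E_b(\rho_R)\le E_b(\rho_{\pi_0})$, dropping the nonnegative kinetic integral, and using \eqref{ineq2} of Proposition \ref{prop2} together with Lemma \ref{lem5} to get $\int\rho_R B\rho_R\le Ck_0^{4/3}$, one obtains $\int A(\rho_R)T(n_{b,\rho_R})\le E_b(\rho_{\pi_0})+Ck_0^{4/3}$, whence $\int A(\rho_R)\le T_1^{-1}(E_b(\rho_{\pi_0})+Ck_0^{4/3})$ by \eqref{conditionT}. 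Collecting the four estimates yields $\lambda_R\le CE_b(\rho_{\pi_0})+Ck_0^{4/3}$ after absorbing the additive constant $\tfrac12 L(M)$.

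The main obstacle I expect is structural rather than analytic: getting the direction of the Karush--Kuhn--Tucker inequality right. One must evaluate at a point of \emph{low} density, where \eqref{eprime1r} furnishes $\lambda_R\le E_b'(\rho_R)$, rather than integrating the relation against $\rho_R$, which by \eqref{eprime1r}--\eqref{eprime2r} would only produce a \emph{lower} bound on $\lambda_R$; and one must verify that such a low-density point with $\eta\ge1$ and $|x|<R$ genuinely exists for all large $R$. The new non-isentropic term causes no difficulty at this stage, since the crude bound $T_0\int A(\rho_R)$ suffices; its delicate smallness for large $|x|$ is only required later for the radius estimate.
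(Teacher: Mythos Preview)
Your proof is correct and follows essentially the same strategy as the paper: evaluate the Euler--Lagrange inequality \eqref{eprime1r} at a point of small density, bound the non-isentropic term crudely by $T_0\int A(\rho_R)$, and control the latter via \eqref{ineq4}. The only difference is cosmetic: the paper (following \cite{Auchmuty1971}) places the evaluation point at $\eta(x)\sim R$ so that the $A'$, rotational, and $-B\rho_R$ contributions are $o(R^{-1})+O(R^{-2})-O(R^{-1})$ and hence vanish in the limit, whereas you keep $\eta(x_*)\ge 1$ fixed and simply absorb the resulting bounded constants $A'(\delta)T_0+\tfrac12\sup_{[0,M]}L$ into the generic $C$---either choice yields the required uniform upper bound on $\lambda_R$ (and note that boundedness of $L$ on $[0,M]$ already follows from (A3), so you need not invoke (A4) here).
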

\begin{proof}
Since $T_1 \leq |T(n)| \leq T_0, |T'(n)|\leq T_0$, the proof of \cite[Lemma~5]{Auchmuty1971} implies that 
$$
\lambda_R\leq o(R^{-1})+O(R^{-2})-O(R^{-1})
+\int_{r_b(y)>r_b(x)}A(\rho(y))T'(n_{b,\rho}(r_b(y)))\ dy.
$$
on a special subset of $B_R$. 
So there exists $\pi_2>\pi_1$ such that for any $R\geq \pi_2$, 
\begin{equation*}
\begin{aligned}
\lambda_R &\leq 
\sup\left|\int_{r_b(y)>r_b(x)}A(\rho(y))T'(n_{b,\rho}(r_b(y)))\ dy \right|\\
&\leq T_0\int A(\rho_R) \overset{\eqref{ineq4}}{\leq} C E_b(\rho_{\pi_0})+C k_0^{4/3}.
\end{aligned}
\end{equation*}
Hence this lemma is proved.
\end{proof}

\begin{lem}
\label{lem7}
$\|\rho_R\|_{\infty}\leq k_1$, all $R$.
\end{lem}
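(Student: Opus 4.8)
The plan is to run a bootstrap on the integrability of $\rho_R$, starting from the uniform $L^{4/3}$ bound of Lemma \ref{lem5} and terminating at $L^\infty$, with the pointwise Euler--Lagrange inequality \eqref{ineq3} serving as the engine that trades integrability of $B\rho_R$ for integrability of $\rho_R$. The one essential point is that every constant produced along the way must be independent of $R$; the remark after Lemma \ref{lem3} warns that the analogous constants there depend on $\rho$, so the first task is to produce an $R$-uniform version of \eqref{ineq3}.

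First I would revisit the derivation of \eqref{ineq3} for the constrained minimizers $\rho_R$. By (A1) and the definition \eqref{A} of $A$, for any prescribed $c_2$ one has $A'(s)\ge c_2 s^{1/3}$ once $s$ exceeds a threshold $K=K(c_2)$ depending only on $f$ and $c_2$, not on $R$. Feeding the Euler--Lagrange identity \eqref{eprime2r} into the chain of inequalities in the proof of Lemma \ref{lem3} gives
\begin{equation*}
c_2 T_1\,\rho_R^{1/3}\le T_1 A'(\rho_R)\le T_0\int A(\rho_R)+B\rho_R+\lambda_R
\end{equation*}
wherever $\rho_R>0$ and $\rho_R\ge K$. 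Now $\int A(\rho_R)$ is bounded uniformly in $R$ by \eqref{ineq4} together with the uniform bound $\|\rho_R\|_{4/3}\le k_0$ of Lemma \ref{lem5}, and $\lambda_R$ is bounded uniformly by Lemma \ref{lem6}. Choosing $c_2$ large and absorbing the bounded constants, this yields $R$-independent constants $c_1$ and $K$ with
\begin{equation*}
\rho_R\le c_1^{3}\,(B\rho_R)^3\qquad\text{a.e. on }\{\rho_R\ge K\}.
\end{equation*}
This is precisely where the dependence on $\rho$ flagged in the remark is removed, and it is the step I expect to require the most care.

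With the uniform pointwise bound in hand, the iteration is short. For any exponent $q>1$ I would split
\begin{equation*}
\int \rho_R^{q}=\int_{\{\rho_R<K\}}\rho_R^{q}+\int_{\{\rho_R\ge K\}}\rho_R^{q}\le K^{q-1}M+c_1^{3q}\,\|B\rho_R\|_{3q}^{3q},
\end{equation*}
using $\int\rho_R=M$ on the first piece and the pointwise bound on the second. Starting from $p=4/3\le\frac32$, Proposition \ref{prop1} gives $B\rho_R\in L^{r}$ uniformly for any fixed $r<3p/(3-2p)=12$, the constant depending only on $\|\rho_R\|_1=M$ and $\|\rho_R\|_{4/3}\le k_0$. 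Taking $r=6$, so $3q=6$ and $q=2$, upgrades $\rho_R$ to a uniform $L^{2}$ bound; since $2>\frac32$, a second application of Proposition \ref{prop1} shows $B\rho_R$ is bounded with $\|B\rho_R\|_\infty$ controlled uniformly by $M$ and $\|\rho_R\|_2$.

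Inserting this into the pointwise bound gives $\rho_R\le K+c_1^{3}\|B\rho_R\|_\infty^{3}$ a.e., hence $\|\rho_R\|_\infty\le k_1$ uniformly for $R\ge\pi_2$. For the remaining $R<\pi_2$ the constraint $\rho_R\le R$ in \eqref{W,R} makes the bound automatic, so enlarging $k_1$ if necessary proves the lemma. The main obstacle throughout is the uniformity of constants: as long as each invocation of Proposition \ref{prop1} is tracked so that its constant depends only on the fixed mass $M$ and on the previously established uniform $L^p$ bound, no step degrades, and the two-stage bootstrap closes.
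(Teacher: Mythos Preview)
Your proposal is correct and follows essentially the same approach as the paper: first upgrade Lemma~\ref{lem3} to an $R$-uniform pointwise inequality by inserting the uniform bounds on $\lambda_R$ (Lemma~\ref{lem6}) and $\int A(\rho_R)$ (from \eqref{ineq4} and Lemma~\ref{lem5}) into \eqref{eprime2r}, then bootstrap from $L^{4/3}$ to $L^\infty$ via Proposition~\ref{prop1}, and finally handle small $R$ through the built-in cap $\rho_R\le R$ in $W_R^b$. The paper's proof merely cites ``the proof of Theorem~\ref{thm1}'' (i.e.\ the Auchmuty--Beals bootstrap) for the iteration step, whereas you spell out the two passes $L^{4/3}\to L^2\to L^\infty$ explicitly; the content is the same.
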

\begin{proof}
It follows from \eqref{eprime2r} and  \cref{lem6} that, 
\begin{equation*}
\begin{aligned}
T_1A'(\rho_R)&\leq \max\{0,\lambda_R-\int_{r_b(y)>r_b(x)}A(\rho_R(y))T'(n_{b,\rho_R}(r_b(y)))\ dy+B\rho_R \}\\
&\leq  \max\{0,CE_b(\rho_{\pi_0})+Ck_0^{4/3}+B\rho_R \},\qquad \text{for}\ R\geq\pi_2
\end{aligned}
\end{equation*}
So as  \cref{lem3}, there exist constants $C_1'$, $K'$ such that $\rho^{\gamma-1}\leq C_1'B\rho$ a.e. when $\rho_R\geq K'$. 
Since $CE_b(\rho_1)+Ck_0^{4/3}$ does not depend on $\rho_R$, $C_1'$ and $K'$ do not depend on $\rho_R$ either. 
Thus in the proof of  \cref{thm1}, $\| \rho_R\|_{\infty}$ can be shown to be uniformly bounded when $R\geq \pi_2$. 
Since $\rho_R\in W^b_{R}$, $\| \rho_R\|_{\infty}\leq \pi_2$ for $R\leq \pi_2$. So this lemma is proved.
\end{proof}
 \cref{lem7} implies that 
$\rho_R$ could not concentrate at some point as $R\rightarrow \infty$.

The proof of the following three lemmas are standard, see \cite[Lemma~6,~Corollary,~Lemma~7]{Auchmuty1971}. 
$T$ would not change the result for $T$ is bounded.

\begin{lem}
\label{lem8}
There exists an $e<0$ such that $E_b(\rho_R)\leq e$ for all large $R$.
\end{lem}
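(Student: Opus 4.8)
The plan is to exhibit a single ellipsoidally symmetric trial density of mass $M$ whose energy $E_b$ is strictly negative, and then invoke the minimality of $\rho_R$. Because $\rho_R$ minimizes $E_b$ over $W^b_R$, it suffices to produce one $\tilde\rho\in W^b$ with compact support and finite $L^\infty$ norm such that $E_b(\tilde\rho)<0$: for every $R$ exceeding both the support radius of $\tilde\rho$ and $\|\tilde\rho\|_\infty$ one has $\tilde\rho\in W^b_R$, hence $E_b(\rho_R)\le E_b(\tilde\rho)=:e<0$ for all such $R$, which is the asserted bound with a fixed $e$ independent of $R$.

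To build $\tilde\rho$ I would start from any fixed $\rho\in W^b$ with finite energy integrals (for instance a constrained minimizer $\rho_{\pi_0}$) and apply the mass-preserving isotropic dilation $\rho_\lambda(x)=\lambda^3\rho(\lambda x)$ for $\lambda>0$. Since $r_b(\lambda x)=\lambda\, r_b(x)$, this dilation preserves the ellipsoidal symmetry $\rho_\lambda(x)=\rho_\lambda(r_b(x))$ and the total mass $\int\rho_\lambda=M$, so $\rho_\lambda\in W^b$. A change of variables, together with the identities $m_{\rho_\lambda}(s)=m_\rho(\lambda s)$ and $n_{b,\rho_\lambda}(s)=n_{b,\rho}(\lambda s)$, gives the scaling of the three terms in \eqref{energy}: the kinetic term scales like $\lambda^2$, while the gravitational term satisfies $\int\rho_\lambda B\rho_\lambda=\lambda\int\rho B\rho$, so that $-\tfrac12\int\rho_\lambda B\rho_\lambda=-\tfrac{\lambda}{2}\int\rho B\rho$.

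The decisive term is the internal energy, which after substitution equals $\lambda^{-3}\int A(\lambda^3\rho(x))\,T(n_{b,\rho}(r_b(x)))\,dx$. Bounding $T\le T_0$ by \eqref{conditionT} and using the hypothesis $\lim_{s\to0}f(s)s^{-4/3}=0$ from (A1), which yields $A(s)\le \epsilon(s)\,s^{4/3}$ with $\epsilon(s)\to0$ as $s\to0$, I would exploit that $\lambda^3\rho\to0$ uniformly on the support as $\lambda\to0$ to obtain $\int A(\rho_\lambda)\,T(n_{b,\rho_\lambda})\le T_0\,\tilde\epsilon(\lambda)\,\lambda\int\rho^{4/3}$ with $\tilde\epsilon(\lambda)\to0$. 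Thus the internal energy is $o(\lambda)$, negligible against the gravitational term of order $-\lambda$. Collecting the estimates,
$$
E_b(\rho_\lambda)\le o(\lambda)+O(\lambda^2)-\tfrac{\lambda}{2}\int\rho B\rho,
$$
so for all sufficiently small $\lambda$ the right-hand side is strictly negative; fixing such a $\lambda$ and setting $\tilde\rho=\rho_\lambda$ completes the construction.

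The main obstacle is precisely the internal-energy estimate: one must use the subcritical growth $f(s)=o(s^{4/3})$ near the vacuum uniformly enough that the $\lambda$-coefficient of the internal energy vanishes as $\lambda\to0$, so that the negative gravitational term dominates the leading order. The temperature factor $T$ enters only through the uniform two-sided bound $T_1\le T\le T_0$ of \eqref{conditionT} and therefore cannot change the sign of the leading-order term; this is exactly why the presence of $T$ leaves the conclusion of \cite[Lemma~6]{Auchmuty1971} intact.
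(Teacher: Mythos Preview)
Your proposal is correct and follows essentially the same route as the paper, which simply refers to \cite[Lemma~6]{Auchmuty1971} and notes that the bounded factor $T$ does not affect the argument. The mass-preserving dilation $\rho_\lambda(x)=\lambda^3\rho(\lambda x)$ together with the subcritical growth $A(s)=o(s^{4/3})$ near $0$ is precisely the mechanism used there, and your observation that the temperature factor enters only through the uniform bound \eqref{conditionT} is exactly the paper's justification for why the isentropic proof carries over.
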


\begin{lem}
\label{lem9}
$\|B\rho_R\|_{\infty}\geq -2e/M$ for all large $R$. 
\end{lem}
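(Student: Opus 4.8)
The plan is to exploit the strict negativity of the energy bound from Lemma~\ref{lem8} together with the gravitational term being the only negative contribution to $E_b(\rho_R)$. First I would recall that $E_b(\rho_R)$ decomposes into three pieces: the internal energy $\int A(\rho_R)T(n_{b,\rho_R})$, the kinetic (rotational) energy $\tfrac12\int \rho_R L(m_{\rho_R})\eta^{-2}$, and the gravitational energy $-\tfrac12\int \rho_R B\rho_R$. The first two are nonnegative: $A\ge 0$ and $T\ge T_1>0$ by \eqref{conditionT}, while $L\ge 0$ by (A3) and $\eta^{-2}\ge 0$. Hence the only way $E_b(\rho_R)$ can be negative is through the gravitational term, so I would write
\begin{equation*}
0>e\geq E_b(\rho_R)\geq -\frac12\int \rho_R(x)\,B\rho_R(x)\,dx,
\end{equation*}
which already forces $\int \rho_R B\rho_R\geq -2e>0$ for all large $R$.

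From here the strategy is to bound the integral $\int \rho_R B\rho_R$ above by $\|B\rho_R\|_\infty$ times the total mass, using that $\rho_R\ge 0$ and the mass constraint $\int \rho_R=M$ from the definition of $W^b_R$ in \eqref{W,R}. Concretely,
\begin{equation*}
\int \rho_R(x)\,B\rho_R(x)\,dx\leq \|B\rho_R\|_\infty \int \rho_R(x)\,dx=\|B\rho_R\|_\infty\, M.
\end{equation*}
Here $B\rho_R\geq 0$ is used (it is a Riesz potential of a nonnegative density, see \eqref{B}), and its boundedness for the relevant range follows since $\rho_R\in L^1\cap L^\infty$ by Lemma~\ref{lem7}, so Proposition~\ref{prop1} applies with $r=\infty$. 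Combining the two displays gives $-2e\leq \int\rho_R B\rho_R\leq M\,\|B\rho_R\|_\infty$, i.e. $\|B\rho_R\|_\infty\geq -2e/M$ for all large $R$, which is exactly the claim.

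I expect no serious obstacle here; the lemma is essentially a rearrangement of Lemma~\ref{lem8}. The only points requiring a line of care are (i) confirming that the presence of the temperature factor $T$ does not spoil the nonnegativity of the internal energy, which is immediate from $A\ge 0$ and $T\ge T_1>0$, and (ii) justifying that $B\rho_R$ is genuinely bounded so that $\|B\rho_R\|_\infty$ is finite and the Hölder-type estimate above is meaningful; this is secured by the uniform $L^\infty$ bound of Lemma~\ref{lem7} feeding into Proposition~\ref{prop1}. The essential mechanism is the familiar one from \cite{Auchmuty1971}: a negative energy ceiling transfers directly into a lower bound on the size of the self-gravitational potential, and the bounded, strictly positive temperature factor leaves this mechanism untouched.
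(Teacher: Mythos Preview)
Your argument is correct and is precisely the standard Auchmuty--Beals mechanism the paper defers to: drop the two nonnegative terms in $E_b(\rho_R)$ (using $T\geq T_1>0$ for the internal energy), obtain $\int\rho_R B\rho_R\geq -2e$, and then bound this integral above by $M\|B\rho_R\|_\infty$ via the mass constraint. The only remark is that you do not actually need Lemma~\ref{lem7} to justify $\|B\rho_R\|_\infty<\infty$---each $\rho_R$ already lies in $L^1\cap L^\infty$ by the definition of $W^b_R$---but this is harmless.
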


\begin{lem}
\label{lem10}
There is an $\epsilon_0>0$ such that for each $R$ 
$$
\int_{|x-x_R|<1}\rho_R(x)\ dx\geq \epsilon_0
$$
for some $x_R$.
\end{lem}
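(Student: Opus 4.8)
The plan is to argue by contradiction, showing that if the mass of $\rho_R$ were spread too thinly—so that no unit ball captures a fixed positive fraction of the total mass—then the gravitational potential $B\rho_R$ could not attain the lower bound on its supremum guaranteed by Lemma \ref{lem9}. First I would recall that Lemma \ref{lem9} supplies a point $x_R^\ast$ (where the supremum of $B\rho_R$ is essentially achieved, or a point with $B\rho_R(x_R^\ast)\geq -2e/M - \delta$ for small $\delta$) at which
\begin{equation*}
B\rho_R(x_R^\ast)=\int_{\mathbb{R}^3}\frac{\rho_R(y)}{|x_R^\ast-y|}\ dy\geq -\frac{2e}{M}>0.
\end{equation*}
The idea is to split this integral according to the distance from $x_R^\ast$: a near part over $\{|x_R^\ast-y|<1\}$ and a far part over $\{|x_R^\ast-y|\geq 1\}$.

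The far part is harmless: on $\{|x_R^\ast-y|\geq 1\}$ the kernel $|x_R^\ast-y|^{-1}\leq 1$, so that piece is bounded above by $\int \rho_R = M$; but more usefully I would keep the kernel and use the uniform $L^{4/3}$ bound from Lemma \ref{lem5} together with Proposition \ref{prop1} to control the total contribution. The near part is where the uniform $L^\infty$ bound of Lemma \ref{lem7} does the work. Writing $m_R:=\int_{|x_R^\ast-y|<1}\rho_R(y)\ dy$ for the near mass and using $\|\rho_R\|_\infty\leq k_1$, I would estimate the near integral by a combination of a Hölder/rearrangement bound (the worst case for $\int_{|z|<1}|z|^{-1}\rho_R$ under the $L^\infty$ ceiling and fixed near-mass $m_R$ is concentration against the singularity) giving a bound of the form $C k_1^{1/3} m_R^{2/3}$ or similar in terms of $m_R$. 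Combining the two estimates yields
\begin{equation*}
-\frac{2e}{M}\leq B\rho_R(x_R^\ast)\leq C_1 m_R^{2/3}+C_2,
\end{equation*}
and I would arrange the splitting (choosing the radius $1$, and if necessary absorbing the tail via the $L^{4/3}$ Riesz estimate of Proposition \ref{prop1}) so that the far/tail constant $C_2$ is strictly smaller than $-2e/M$. This forces $m_R\geq\epsilon_0$ for a fixed $\epsilon_0>0$ independent of $R$, which is exactly the claim with $x_R=x_R^\ast$.

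The main obstacle I expect is making the decomposition quantitatively clean enough that the far contribution is provably bounded \emph{below} the level $-2e/M$, so that a genuine lower bound on $m_R$ survives. This is delicate because the crude bound $\int_{|z|\geq 1}|z|^{-1}\rho_R\leq M$ alone is not strictly less than $-2e/M$; I would instead need the finer control on the potential's tail from Proposition \ref{prop1} (Riesz potential estimates in terms of $\|\rho_R\|_1$ and $\|\rho_R\|_{4/3}$, both uniformly bounded), possibly after enlarging the cut-off radius and then rescaling back to radius $1$. The entropy factor $T$ plays no role here, since $B\rho_R$ involves only the density and the uniform bounds of Lemmas \ref{lem5} and \ref{lem7} are already $T$-independent; this is why, as the authors note, the argument is essentially that of \cite[Lemma~7]{Auchmuty1971} with $T$ bounded.
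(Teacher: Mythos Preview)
Your plan correctly identifies the pivot (Lemma~\ref{lem9}'s lower bound on $\|B\rho_R\|_\infty$) and the need to split the convolution, but the two-scale decomposition you propose cannot close, and the fix you suggest via Proposition~\ref{prop1} does not work. Applying Proposition~\ref{prop1} to $\rho_R\chi_{\{|y-x_R^\ast|\geq 1\}}$ yields a bound in terms of $\|\rho_R\chi\|_1$ and $\|\rho_R\chi\|_p$, but the $L^1$ norm of this tail can be essentially $M$ (nothing prevents almost all the mass from sitting just outside the unit ball around $x_R^\ast$), so the resulting constant $C_2$ has no reason to be below $-2e/M$. Enlarging the cut-off to a radius $r$ makes the far piece $\leq M/r$ small, but then the near piece concerns the mass in a ball of radius $r$, not radius $1$, and you lose the conclusion you want.

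The argument the paper invokes from \cite[Lemma~7]{Auchmuty1971} uses a \emph{three}-way split and, crucially, applies the contradiction hypothesis on the \emph{intermediate} shell, not only at the centre. Set $\epsilon_R:=\sup_x\int_{|y-x|<1}\rho_R(y)\,dy$ and suppose $\epsilon_{R_j}\to 0$ along a subsequence. For any $x$ write
\[
B\rho_R(x)=\int_{|y-x|<1}+\int_{1\le|y-x|<r}+\int_{|y-x|\ge r}=:B_1+B_2+B_3.
\]
Your bathtub estimate (or Proposition~\ref{prop1} with Lemma~\ref{lem5}) handles $B_1=o(1)$ as $\epsilon_R\to 0$. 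The step you are missing is $B_2$: cover the annulus $\{1\le|y-x|<r\}$ by $O(r^3)$ unit balls; on each of them $\int\rho_R\le\epsilon_R$ and the kernel is $\le 1$, so $B_2\le Cr^3\epsilon_R$. Finally $B_3\le M/r$. Choosing $r=r(\epsilon_R)\to\infty$ with $r^3\epsilon_R\to 0$ forces $\|B\rho_R\|_\infty\to 0$, contradicting Lemma~\ref{lem9}. The covering argument is the essential idea: it is the only place where one exploits that \emph{every} unit ball carries small mass, and without it the bulk of the potential is uncontrolled.
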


\begin{lem}
\label{lem11}
There is a constant $r_0$ such that if $x_R$ is as in \cref{lem10}, then $|x_R|+1\leq r_0$, all $R$.
\end{lem}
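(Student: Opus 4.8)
The plan is to exploit the ellipsoidal symmetry $\rho_R(x)=\rho_R(r_b(x))$ in a quantitative way. The point is that a unit ball can only capture a fixed amount $\epsilon_0$ of mass at distance $r_*:=r_b(x_R)$ from the origin if the \emph{entire} ellipsoidal shell $\{r_b\approx r_*\}$ carries mass of order $r_*^2\epsilon_0$; since the total mass is only $M$, this forces $r_*$ to stay bounded, and then $|x_R|$ is controlled because $r_b$ and $|x|$ are comparable for fixed $b$. No use of the Euler-Lagrange equation or of the gravitational potential is needed for this step.

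Concretely, I would first straighten the ellipsoids into spheres by the change of variables $\tilde y=(y_1,y_2,y_3/b)$, under which $r_b(y)=|\tilde y|$, $dy=b\,d\tilde y$, and $\rho_R$ becomes a genuinely radial function $\tilde\rho_R(|\tilde y|)$. The unit ball $B(x_R,1)$ then maps into a ball $B(\tilde x_R,\rho_1)$ of bounded radius $\rho_1$ (depending only on the fixed $b$) centered at $\tilde x_R$ with $|\tilde x_R|=r_b(x_R)=r_*$, and Lemma \ref{lem10} gives $\int_{B(\tilde x_R,\rho_1)}\tilde\rho_R\,d\tilde y\geq \epsilon_0/b$.

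The key elementary estimate is that, for a radial integrand, the portion of the sphere $\{|\tilde y|=s\}$ lying inside $B(\tilde x_R,\rho_1)$ is a spherical cap whose area is bounded by a constant depending only on $\rho_1$, uniformly in $s$ and in $r_*\geq 2\rho_1$, whereas the full sphere has area $4\pi s^2\geq \pi r_*^2$. Hence the mass caught in the small ball is at most $C r_*^{-2}$ times the mass of the full shell $\{r_*-\rho_1<|\tilde y|<r_*+\rho_1\}$, and the latter is in turn at most $M/b$. Combining this with the lower bound $\epsilon_0/b$ yields $\epsilon_0/b\leq C r_*^{-2}(M/b)$, so $r_*^2\leq CM/\epsilon_0$. (If $r_*<2\rho_1$ the bound on $r_*$ is immediate.) Since $|x_R|\leq \max(1,b)\,r_b(x_R)$ for fixed $b$, this bounds $|x_R|$, and taking $r_0$ slightly larger gives $|x_R|+1\leq r_0$ for all $R$.

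The main technical point is the uniform cap-area estimate together with the bookkeeping of constants through the change of variables; these are routine. It is worth emphasizing where the symmetry pays off: in the merely axisymmetric setting of \cite{Auchmuty1971} a concentration on the $z$-axis would only control the cylindrical radius $\eta$, so the argument above would be insufficient, whereas here the stronger assumption $\rho(x)=\rho(r_b(x))$ makes the shells genuinely two-dimensional and controls both components of $x_R$ at once, using only the symmetry and the fixed total mass $M$.
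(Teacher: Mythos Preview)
Your argument is correct and rests on the same mechanism as the paper's: use the symmetry $\rho_R(x)=\rho_R(r_b(x))$ to compare the mass captured by $B(x_R,1)$ with the mass of the full ellipsoidal shell $\{r_b\approx r_b(x_R)\}$, and then invoke the constraint $\int\rho_R=M$. The executions differ. The paper works directly in the coordinates $(r_b,\theta,\beta)$ and encloses $B(x_R,1)$ in a \emph{belt} $\{|r_b-r_b(x_R)|\lesssim 1,\ |\beta-\beta(x_R)|\lesssim 1/r_b(x_R)\}$ with no restriction on the azimuthal angle $\theta$; this produces the ratio $\sim 1/r_b(x_R)$ between ball mass and shell mass and hence the linear bound $r_b(x_R)\le C M/\epsilon_0$, at the cost of a case split $b\le 1$ versus $b\ge 1$. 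You instead rescale the $x_3$-axis so the level sets become spheres and observe that the image of $B(x_R,1)$ meets each sphere in a cap of uniformly bounded area, giving the sharper ratio $\sim 1/r_b(x_R)^2$ and the bound $r_b(x_R)^2\le C M/\epsilon_0$; the bookkeeping is lighter and no case split is needed. Either estimate suffices for the lemma, and your final passage from $r_b(x_R)$ to $|x_R|$ via $|x|\le \max(1,b)\,r_b(x)$ matches the paper's.
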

\begin{proof}
Claim that when $r_b(x_R)>\max\{b^{-1},1\}+1$,
\begin{equation*}
\begin{aligned}
&\int_{r_b(x)<2r_b(x_R)}\rho_R(x)\ dx \geq  \frac{\min\{b,1\}r_b(x_R)\epsilon_0}{4\pi}
\end{aligned}
\end{equation*}
Once it is proved, since $\rho_R$  is of total mass $M$, it follows that 
\begin{equation*}
\begin{aligned}
\min\{b^{-1},1\}|x_R| & \leq r_b(x_R) \leq  \left(\frac{4\pi M }{\epsilon_0}+1\right) (\max\{ b^{-1},1\}+1)\\
 |x_R| &\leq  \left(\frac{4\pi M }{\epsilon_0}+1\right) (2b+b^{-1}+1)
\end{aligned}
\end{equation*}
So it suffices to prove the above claim.

Suppose that $$x=(r_b\cos\theta\cos\beta,r_b\sin\theta\cos\beta,br_b\sin\beta),$$
where $r_b$ and $\theta$ are defined as before, and $\beta\in[-\frac{\pi}{2},\frac{\pi}{2}]$ satisfies $\sin\beta=z(x)b^{-1}r_b(x)^{-1}$ and $\cos\theta=\eta(x)r_b(x)^{-1}$.
And define $r_b(x_R)$, $\theta(x_R)$ and $\beta(x_R)$ the same way as above for $x_R$.
Then $|x-x_R|<1$ implies that
\begin{equation}
\label{ineq6}
\begin{aligned}
&[r_b\cos\theta\cos\beta-r_b(x_R)\cos\theta(x_R)\cos\beta(x_R)]^2+[r_b\sin\theta\cos\beta
-\\&\qquad r_b(x_R)\sin\theta(x_R)\cos\beta(x_R)]^2
+b^2[r_b\sin\beta-r_b(x_R)\sin\beta(x_R)]^2<1.
\end{aligned}
\end{equation}
\begin{enumerate}
\item[(1)] If $b\leq1$, inequality \eqref{ineq6} gives 
\begin{equation*}
\begin{aligned}
&[r_b\cos\theta\cos\beta-r_b(x_R)\cos\theta(x_R)\cos\beta(x_R)]^2+[r_b\sin\theta\cos\beta
-\\&\qquad r_b(x_R)\sin\theta(x_R)\cos\beta(x_R)]^2
+[r_b\sin\beta-r_b(x_R)\sin\beta(x_R)]^2<b^{-2}.
\end{aligned}
\end{equation*}
\begin{equation*}
r_b^2+r_b(x_R)^2-2r_br_b(x_R)\{\cos\beta\cos\beta(x_R)\cos[\theta-\theta(x_R)]+\sin\beta\sin\beta(x_R)\}<b^{-2}.
\end{equation*}
Since $\beta\in[-\frac{\pi}{2},\frac{\pi}{2}]$, it holds that
\begin{equation*}
r_b^2+r_b(x_R)^2-2r_br_b(x_R)[\cos\beta\cos\beta(x_R)+\sin\beta\sin\beta(x_R)]<b^{-2}.
\end{equation*}
Therefore, 
\begin{equation*}
\begin{aligned}
& |r_b-r_b(x_R)|^2<b^{-2},\\
& \cos |\beta-\beta(x_R)|> \frac{r_b^2+r_b(x_R)^2-b^{-2}}{2r_br_b(x_R)}.
\end{aligned}
\end{equation*}
Here  $0=\frac{b^{-2}-b^{-2}}{2r_br_b(x_R)}\leq\frac{r_b^2+r_b(x_R)^2-b^{-2}}{2r_br_b(x_R)}\leq\frac{r_b^2+r_b(x_R)^2-|r_b-r_b(x_R)|^2}{2r_br_b(x_R)}=1$.
So it holds that
when $r_b(x_R)>\max\{b^{-1},1\}+1$,
\begin{equation*}
\sin|\beta-\beta(x_R)|<\sqrt{\frac{b^{-2}-[r_b-r_b(x_R)]^2}{2r_br_b(x_R)}}<\sqrt{\frac{b^{-2}}{2[r_b(x_R)-1]r_b(x_R)}}<\frac{1}{br_b(x_R)}.
\end{equation*}
Thus, \cref{lem10} yields that 
\begin{align*}
\epsilon_0&\leq \iiint\limits_{\substack{|r_b-r_b(x_R)|\leq b^{-1},\\ |\beta-\beta(x_R)|\leq 2b^{-1}[r_b(x_R)]^{-1}}} \rho(y) dy
\\
&\leq
\int_{r_b(x_R)-b^{-1}}^{r_b(x_R)+b^{-1}}\int_{0}^{2\pi}\int_{\beta(x_R)-|\beta-\beta(x_R)|}^{\beta(x_R)+|\beta-\beta(x_R)|} \rho(r_b) \ br_b^2\cos\beta\ d\beta d\theta dr_b\\
&\leq 4\pi \cos [\beta(x_R)] \sin |\beta-\beta(x_R)|\int_{r_b(x_R)-b^{-1}}^{r_b(x_R)+b^{-1}}\rho(r_b)\  br_b^2\ dr_b\\
&\leq \frac{4\pi}{br_b(x_R)}\int_{r_b(x_R)-b^{-1}}^{r_b(x_R)+b^{-1}}\rho(r_b)\  br_b^2\ dr_b,
\end{align*}
and the mass in the open set  $\{x|r_b(x)<2r_b(x_R) \}$ satisfies
\begin{align*}
\iiint\limits_{r_b(y)<2r_b(x_R)} \rho(y)\ dy\geq
\int_{r_b(x_R)-b^{-1}}^{r_b(x_R)+b^{-1}}\rho(r_b) \ br_b^2  \  dr_b\geq\frac{br_b(x_R)\epsilon_0}{4\pi}.
\end{align*}
So the claim is proved when $b\leq 1$. 

\item[(2)] If $b\geq1$, then the inequality \eqref{ineq6} implies that 
\begin{equation*}
\begin{aligned}
&[r_b\cos\theta\cos\beta-r_b(x_R)\cos\theta(x_R)\cos\beta(x_R)]^2+[r_b\sin\theta\cos\beta
-\\&\qquad r_b(x_R)\sin\theta(x_R)\cos\beta(x_R)]^2
+[r_b\sin\beta-r_b(x_R)\sin\beta(x_R)]^2<1.
\end{aligned}.
\end{equation*}
Then remaining of the estimate is the same.
\end{enumerate}
Thus the claim is proved and  \cref{lem11} follows.
\end{proof}

\vspace{0.5cm}

Up to now, we have just used conditions (A1), (A3) and (A5). 
In the following, conditions (A2), (A4), (A6) and (A7) are used
and we will focus on the subsequence $\{\rho_{R_j}\}$
\begin{lem}
\label{lem12}
$\{\rho_{R_j}  \}_{j\geq1}$ is the minimizing sequence of $E_b$ on $W^b$, i.e. 
$$
\lim\limits_{j\rightarrow \infty}E_b(\rho_{R_j})=\min\limits_{\rho\in W^b}E_{b}(\rho).
$$
\end{lem}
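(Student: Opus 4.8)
The goal is to show that the particular subsequence $\{\rho_{R_j}\}$, obtained as a weakly convergent minimizing-type sequence of constrained minimizers, is in fact a genuine minimizing sequence for $E_b$ over the \emph{unconstrained} admissible set $W^b$. The natural strategy is to sandwich $\lim_j E_b(\rho_{R_j})$ between $\min_{W^b}E_b$ and itself, so the real content is the inequality $\lim_j E_b(\rho_{R_j})\le \inf_{\rho\in W^b}E_b(\rho)$; the reverse inequality is immediate because each $\rho_{R_j}\in W^b_{R_j}\subset W^b$, so $E_b(\rho_{R_j})\ge \inf_{W^b}E_b$ for every $j$. Thus the plan is to take an arbitrary competitor $\rho\in W^b$ with $E_b(\rho)$ finite, and produce from it, for each large $R$, a truncated-and-rescaled element $\rho^{(R)}\in W^b_R$ whose energy converges to $E_b(\rho)$ as $R\to\infty$; combined with $E_b(\rho_R)\le E_b(\rho^{(R)})$ (since $\rho_R$ minimizes over $W^b_R$), this forces $\limsup_R E_b(\rho_R)\le E_b(\rho)$, and taking the infimum over $\rho$ finishes the argument.

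\textbf{Construction of the cutoffs.} First I would approximate a general $\rho\in W^b$ by densities that both live in $B_R$ and are bounded by $R$, which are the two defining restrictions of $W^b_R$ in \eqref{W,R}. The truncation must respect three things: the ellipsoidal symmetry $\rho(x)=\rho(r_b(x))$, the total-mass constraint $\int\rho=M$, and finiteness of all three integrals in \eqref{energy}. The clean way is to cut off $\rho$ on the ellipsoid $\{r_b(x)\le R_*\}$ and cap it at height $R_*$ for a radius $R_*=R_*(R)\nearrow\infty$ slightly smaller than $R$, then rescale radially by a factor $a=a(R)\to 1$ (or renormalize the height) to restore $\int\rho^{(R)}=M$. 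Because $r_b$ scales homogeneously, this rescaling keeps the ellipsoidal symmetry intact and moves $n_{b,\rho}$ in a controlled way. The key analytic point is then that each of the three energy terms is continuous under this truncate/rescale/renormalize operation: the gravitational term via Proposition \ref{prop2}, the rotational term via the absolute continuity of $L$ in (A3) together with dominated convergence, and the internal-energy term $\int A(\rho)T(n_{b,\rho})$ via monotone/dominated convergence using the uniform bounds $T_1\le T\le T_0$ from \eqref{conditionT}. Here the conditions (A4) and (A6), which control how $L$ and $T=e^S$ behave under the scaling $m\mapsto am$ and $n\mapsto an$, are exactly what is needed to bound the rescaled energy from above by $E_b(\rho)$ up to an error tending to zero.

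\textbf{Main obstacle.} The delicate step is the rescaling that repairs the mass constraint without inflating the energy. Truncation alone decreases mass, so one must expand the support slightly; but expanding the support by a factor $a^{-1}>1$ can increase the internal and rotational energies, and one must verify these increases vanish as $a\to 1$. This is precisely where (A4) ($L(am)\ge a^{4/3}L(m)$) and (A6) ($e^{S(an)}\ge a^{2/3}e^{S(n)}$) enter: they guarantee that under the subunit scaling the rescaled energy is controlled from below by a power of $a$ times the original energy, so that the scaling-induced error is $O(1-a)$ and hence negligible. I expect the bookkeeping of which direction to rescale, and checking that all three terms degrade continuously and monotonically in the right sense, to be the technical heart of the proof; the rest is an application of the continuity estimates in Propositions \ref{prop1}--\ref{prop4} and dominated convergence. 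Finally, since the original $\rho_R$ already satisfy $E_b(\rho_R)\le E_b(\rho_{\pi_0})$ and the uniform bounds of Lemmas \ref{lem5}--\ref{lem11}, the limit along $\{R_j\}$ is legitimate and yields $\lim_j E_b(\rho_{R_j})=\min_{W^b}E_b$.
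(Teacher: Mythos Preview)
Your overall strategy---take an arbitrary $\tau\in W^b$, approximate it by elements of $W^b_R$ whose energies converge to $E_b(\tau)$, and then use $E_b(\rho_R)\le E_b(\tau^{(R)})$---is correct and is exactly the skeleton of the paper's proof. The genuine difference is in how the mass constraint is repaired after truncation. The paper does \emph{not} rescale: it truncates $\tau$ to $\tau'_\alpha$ and then restores the lost mass by adding a bounded, compactly supported bump $\tau''_\alpha=(M-\int\tau'_\alpha)\sigma$, with $\sigma\in P_\epsilon(\tau)$ supported where $\tau>0$. Because $\sigma$ is bounded, compactly supported, and lives where $\eta\ge\epsilon$, the perturbed density $\tau_\alpha=\tau'_\alpha+\tau''_\alpha$ is dominated pointwise by $\tau+D\chi_{\{\sigma>0\}}$ for a fixed $D$, and Lebesgue dominated convergence applies term by term using only the bounds $T_1\le T\le T_0$ and $L\le L(M)$. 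No scaling analysis is required.

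In particular, your identification of (A4) and (A6) as ``exactly what is needed'' here is misplaced. Those hypotheses give one-sided lower bounds $L(am)\ge a^{4/3}L(m)$ and $T(an)\ge a^{2/3}T(n)$, which would bound the rescaled internal and rotational energies from \emph{below}, the wrong direction for $E_b(\rho^{(R)})\le E_b(\rho)+o(1)$; and in any case the paper's proof of this lemma uses neither. The scaling conditions (A4), (A6) first enter in Lemma~\ref{lem13}, where a genuine splitting-and-rescaling argument is performed. Your rescaling route can presumably be pushed through, but the tool you would actually use is continuity as $a\to1$ via dominated convergence and the uniform boundedness of $T$ and $L$, not the subunit-scaling inequalities. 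The paper's bump-function device sidesteps all of this bookkeeping.
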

\begin{proof}
$E_b(\rho_{R_j})$ is a decreasing sequence and by \eqref{ineq2} 
$$
E_b(\rho_{R_j})
\geq -\int \rho_{R_j} B\rho_{R_j} \geq -CM^{2/3}\|\rho_{R_j}\|^{4/3}_{4/3}\geq -CM^{2/3}k_0.
$$
So $\lim\limits_{j\rightarrow\infty}E_b(\rho_{R_j})$ exists,
and it suffices to prove that for any $\tau\in W^b$, $E_b(\tau)\geq \lim\limits_{j\rightarrow \infty}E_b(\rho_{R_j})$.

Take a nonnegative function $\sigma\in P_{\epsilon}(\tau)$ for some $\epsilon>0$, $\int \sigma=1$ and $\sigma(x)=0$ when $\tau(x)=0$. 
Let 
\begin{align*}
\tau'_{\alpha}(x)&=\begin{cases}
\tau(x) & \text{if}\ |x|<\alpha \ \text{or}\ \tau(x)<\frac{1}{2}\alpha\\
0 & \text{otherwise}
\end{cases},\\
\tau''_{\alpha}(x)&=(M-\int \tau'_{\alpha})\sigma,\\
\tau_{\alpha}&=\tau'_{\alpha}+\tau''_{\alpha}.
\end{align*}
The definition of $P_{\epsilon}(\tau)$ (see \eqref{P_0}) implies that $\sigma\in L^{\infty}(\mathbb{R}^n)$ and $\sigma\neq 0$ only if $\eta(x)\geq\epsilon$, $|x|\leq \epsilon^{-1}$ and $\tau(x)<\epsilon^{-1}$.
Hence, as $\alpha\rightarrow\infty$, if $\sigma(x)=0$, then $\tau_{\alpha}(x)$ increases to $\tau(x)$ a.e.; if $\sigma(x)>0$, then $\tau_{\alpha}(x)\rightarrow\tau(x)$ a.e. and $\tau_{\alpha}(x)$ are uniformly bounded by some constant $D$ (depending on $\|\sigma\|_{L^{\infty}}$ and $\epsilon$).

Note that $n_{b,\tau_{\alpha}}(r_b)\rightarrow n_{b,\tau}(r_b)$ and $n_{b,\tau_{\alpha}}$ are uniformly bounded by $M$, and
\begin{align*}
\int A(\tau_{\alpha})T(n_{b,\tau_{\alpha}}(r_b))
& \leq\int \Big\{A(\tau)T(n_{b,\tau}(r_b))\chi_{\{\sigma=0\}}+A(D)T_0\chi_{\{\sigma>0\}} \Big\}\\
&  \leq\int A(\tau(x))T(n_{b,\tau}(r_b(x)))\ dx+\frac{4}{3}\pi \epsilon^{-3}A(D)T_0,
\end{align*}
where $\chi$ is the characteristic function.
Therefore the Lebesgue's Dominated Convergence Theorem implies that $\int A(\tau_{\alpha})T(n_{b,\tau_{\alpha}}(r_b)) \rightarrow\int A(\tau)T(n_{b,\tau}(r_b))$ as $\alpha\rightarrow \infty$.
Similar arguments can also be applied to the other two parts of $E_b$. 
Noting that $\sigma(x)=0$ when $\eta(x)<\epsilon$, it holds that
\begin{align*}
\int\tau_{\alpha}\frac{L(m_{\tau_{\alpha}}(\eta))}{\eta^2}
&\leq \int\Big\{\tau\frac{L(m_{\tau}(\eta))}{\eta^2}\chi_{\{\eta(x)<\epsilon\}}+\tau\frac{L(M)}{\epsilon^2}\chi_{\{\eta(x)\geq\epsilon,\sigma=0\}}\\
&\qquad\qquad+ D\frac{L(M)}{\epsilon^2}\chi_{\{\eta(x)\geq\epsilon,\sigma>0\}}  \Big\};\\
\int \tau_{\alpha}B\tau_{\alpha}&\leq \int (\tau+D\chi_{\{\sigma>0 \}})B(\tau+D\chi_{\{\sigma>0 \}})\\
&=\frac{1}{4\pi}\|\nabla B(\tau+D\chi_{\{\sigma>0 \}})\|_2^2\\
&\leq \Big( (\int \tau B\tau)^{1/2}+(\int D\chi_{\{\sigma>0 \}} BD\chi_{\{\sigma>0 \}})^{1/2} \Big)^2.
\end{align*}
Therefore it follows from the Lebesgue's Dominated Convergence Theorem that 
$$
\lim\limits_{\alpha\rightarrow\infty}E_b(\tau_{\alpha})=E_b(\tau).
$$
Due to that  $E_b(\tau_{\alpha})\geq E_b(\rho_{\alpha})$ when $\alpha$ is large enough, it holds that $E_b(\tau)\geq\lim\limits_{j\rightarrow \infty}E_b(\rho_{R_j})$. Since $\tau$ is chosen arbitrarily, $\{\rho_{R_j} \}_{j\geq1}$ is the minimizing sequence of $E_b$ on $W^b$. 
\end{proof}

\begin{lem}
\label{lem13}
Suppose that the subsequence $\{\rho_{R_j} \}_{j\geq1}$ and $\rho_0$ are given in \eqref{sequence}. 
If the conditions (A4) and (A6) hold, then for any $\delta>0$, there exists a $r>2r_0>0$ such that  
\begin{equation*}
\int_{|x|\geq r}\rho_{R_j}(x)\ dx\leq \delta, \qquad \text{for} \ \text{all}\ R_j.
\end{equation*}
\end{lem}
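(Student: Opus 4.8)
The plan is to argue by contradiction from the fact that $\{\rho_{R_j}\}$ is a minimizing sequence for $E_b$ on $W^b$ (Lemma~\ref{lem12}). Suppose the conclusion fails, so there is $\delta_0>0$ such that for every $r>2r_0$ some index $j=j(r)$ satisfies $\int_{|x|\ge r}\rho_{R_j}\,dx>\delta_0$. Because each $\rho_{R_j}$ is supported in $B_{R_j}$, this forces $R_{j(r)}\ge r$, hence $j(r)\to\infty$ and $E_b(\rho_{R_{j(r)}})\to\inf_{W^b}E_b$ as $r\to\infty$. My aim is to manufacture from $\rho:=\rho_{R_{j(r)}}$ a competitor $\tilde\rho\in W^b$ with $E_b(\tilde\rho)\le E_b(\rho)-c$ for a fixed $c>0$; since $\tilde\rho\in W^b$ gives $E_b(\tilde\rho)\ge\inf_{W^b}E_b$, letting $r\to\infty$ contradicts the convergence above.

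For the competitor I would split $\rho$ at an ellipsoidal radius $s\sim r$ into the inner part $\rho_1=\rho\,\chi_{\{r_b\le s\}}$, of mass $m_1=aM$ with $a=m_1/M\in(0,1)$, and the far part $\rho_2=\rho\,\chi_{\{r_b>s\}}$, of mass $m_2=(1-a)M\ge\delta_0$; I then discard $\rho_2$ and restore the total mass by the amplitude-preserving ellipsoidal dilation $\tilde\rho(x)=\rho_1(a^{1/3}x)$, which is again ellipsoidally symmetric, has $\int\tilde\rho=M$, and so lies in $W^b$. An exact change of variables produces the factors $a^{-1}$ on the internal energy, $a^{-1/3}$ on the rotational energy, and $a^{-5/3}$ on the gravitational energy. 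Here the scaling hypotheses enter: condition (A6) in the form $T(a^{-1}n)\le a^{-2/3}T(n)$ on $[0,M]$ upgrades the internal factor to $a^{-5/3}$, and condition (A4) in the form $L(a^{-1}m)\le a^{-4/3}L(m)$ together with the monotonicity $L'\ge0$ upgrades the rotational factor to $a^{-5/3}$ as well, while the gravitational factor is already $a^{-5/3}$. Writing $P_1,R_1,G_1$ for the internal, rotational and gravitational self-energies of $\rho$ over $\{r_b\le s\}$, these three bounds assemble into $E_b(\tilde\rho)\le a^{-5/3}\,(P_1+R_1-G_1)=a^{-5/3}E_{\mathrm{inner}}$.

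To extract a fixed gain I would decompose $E_b(\rho)=E_{\mathrm{inner}}+(P_2+R_2)-(G_{12}+G_2)$, where $P_2,R_2\ge0$ are the discarded internal and rotational energies and $G_{12}=\int\rho_1 B\rho_2\ge0$, $G_2=\tfrac12\int\rho_2 B\rho_2\ge0$ are the gravitational interaction and the outer self-energy. Provided $G_{12}+G_2$ is small, this and Lemma~\ref{lem8} give $E_{\mathrm{inner}}\le E_b(\rho)+G_{12}+G_2\le e/2<0$, so $|E_{\mathrm{inner}}|\ge|e|/2$. Then $E_b(\rho)-E_b(\tilde\rho)\ge(a^{-5/3}-1)|E_{\mathrm{inner}}|-(G_{12}+G_2)$, and since $m_2\ge\delta_0$ forces $a\le1-\delta_0/M$ and hence $a^{-5/3}-1\ge\kappa_0>0$ (with $a$ bounded away from $0$ because Lemmas~\ref{lem10}--\ref{lem11} keep a lump of mass $\ge\epsilon_0$ inside radius $r_0\ll s$), the gain is at least $\kappa_0|e|/2-(G_{12}+G_2)$, a fixed positive constant once the last bracket is controlled.

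The main obstacle is precisely this control of the gravitational coupling between the retained and discarded mass, i.e.\ showing $G_{12}+G_2=o(1)$ as $r\to\infty$. The outer self-energy $G_2$ is handled by Proposition~\ref{prop1} together with the uniform $L^{4/3}$ and $L^\infty$ bounds (Lemmas~\ref{lem5},~\ref{lem7}), which bound $\|B\rho_2\|_\infty$; the delicate term is $G_{12}$, since $\rho_1$ may still carry mass near $r_b=s$, so the coupling is not automatically negligible even though $\rho_2$ sits beyond radius $s\to\infty$. I would resolve this by choosing the splitting radius $s$ inside a thin spherical shell carrying little mass---which exists by finiteness of the total mass $M$---so that the short-range part of $\int\rho_2 B\rho_1$ is small while the long-range part is $O(m_1/s)\to0$. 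This judicious choice of cut, adapting the arguments of \cite{Rein2001, Luo2008}, is the crux of the proof; the scaling bookkeeping of the previous paragraphs is routine once (A4) and (A6) are in hand.
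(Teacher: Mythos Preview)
Your strategy---split $\rho$, rescale the inner piece via (A4) and the first inequality in (A6), and contradict the minimizing property---shares the core scaling idea with the paper, but the paper proceeds differently and more symmetrically. It rescales \emph{both} pieces, setting $\bar\rho_i(\cdot)=\rho_i(a_i\,\cdot)$ with $a_i^3=M_i/M$, and uses \emph{both} inequalities of (A6) (the first for $\rho_1$, the second $T(M-a_2^3M+a_2^3n)\ge a_2^{2}\,T(n)$ for $\rho_2$) together with (A4) to obtain
\[
E_b(\rho)\ \ge\ a_1^5\,E_b(\bar\rho_1)+a_2^5\,E_b(\bar\rho_2)-G_{12}.
\]
Since $\bar\rho_i\in W^b$ gives $E_b(\bar\rho_i)\ge F_b$, this yields directly $(-F_b)\,M_1M_2\le C\big(E_b(\rho)-F_b\big)+C\,G_{12}$ with \emph{no} $G_2$ term. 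The right-hand side is then made small using Lemma~\ref{lem12} for the first summand and, for $G_{12}$, by splitting off the far part (which is $O(r^{-1})$) and bounding the near part through $\|\nabla B(\rho\,\chi_{r/2<|x|\le r})\|_2$, controlled via the weak convergence $\rho_{R_j}\rightharpoonup\rho_0$ and Proposition~\ref{prop4}. No contradiction setup and no pigeonhole choice of the cut is needed.

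Your one-sided rescaling leaves a genuine gap at $G_2$. Proposition~\ref{prop1} bounds $\|B\rho_2\|_\infty$ only in terms of $\|\rho_2\|_1$ and $\|\rho_2\|_p$; these depend on $m_2$ and $k_1$ but carry no decay in $s$, so you obtain $G_2\le \tfrac12\|B\rho_2\|_\infty\,m_2\le C(m_2,k_1)$---bounded, not small. Since $m_2\ge\delta_0$ is bounded \emph{away} from zero by your contradiction hypothesis, the required gain $\kappa_0|e|/2-(G_{12}+G_2)$ need not be positive and the argument does not close. The gap is fillable: because $\rho_2$ is ellipsoidally symmetric and supported in $\{r_b>s\}$, the change of variables $x\mapsto(x_1,x_2,x_3/b)$ makes it radial, and a Newton shell estimate gives $\|B\rho_2\|_{L^\infty}\le C(b)\,m_2/s$, hence $G_{12}+G_2\le C(b)\,M^2/s\to 0$. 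Alternatively, following the paper and rescaling $\rho_2$ into a second competitor absorbs $G_2$ entirely and is why the second inequality in (A6) is there.
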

\begin{proof}
Take an arbitrary  $\rho \in W^b$.
For any $r$ satisfying $r>2r_0>0$,  define
\begin{equation*}
\rho=\rho\chi_{|x|\leq r}+\rho\chi_{|x|> r}=:\rho_1+\rho_2
\end{equation*}
where $\chi$ is the characteristic function. 
Denote 
$$M_i:= \int \rho_i,~a_i:=(\frac{M_i}{M})^{1/3},~\bar{\rho}_i(\cdot):=\rho_i(a_i\cdot)\qquad\qquad i=1,2.$$
Then $a_1, a_2\in [0,1]$,  $\bar{\rho}_1, \bar{\rho}_2 \in W^b$, and
$$m_{\rho_i}(\eta)=a_i^3m_{\bar{\rho}_i}(a_i^{-1}\eta),~ n_{b,\rho_i}(r_b)=a_i^3n_{b,\bar{\rho}_i}(a_i^{-1}r_b)$$
The condition (A6) implies that
\begin{align*}
\int A(\rho)T(n_{b,\rho}(r_b))
&=\int A(\rho_1)T(n_{b,\rho_1}(r_b)) + \int A(\rho_2)T(M-M_2+n_{b,\rho_2}(r_b))\\
&=\int A\Big(\bar{\rho}_1(a_1^{-1}\cdot)\Big)T\Big(a_1^3n_{b, \bar{\rho}_1}(r_b(a_1^{-1}\cdot))\Big) \\
&\qquad+ \int A\Big(\bar{\rho}_2(a_2^{-1}\cdot)\Big)T\Big(M-a_2^3 M+a_2^3n_{b, \bar{\rho}_2}(r_b(a_2^{-1}\cdot))\Big)\\
&\geq\int A\Big(\bar{\rho}_1(a_1^{-1}\cdot)\Big)a_1^2T\Big(n_{b, \bar{\rho}_1}(r_b(a_1^{-1}\cdot))\Big) \\
&\qquad+ \int A\Big(\bar{\rho}_2(a_2^{-1}\cdot)\Big)a_2^2T\Big(n_{b, \bar{\rho}_a}(r_b(a_2^{-1}\cdot))\Big)\\
&=a_1^5\int A\Big(\bar{\rho}_1\Big)T\Big(n_{b, \bar{\rho}_1}(r_b)\Big) + a_2^5\int A\Big(\bar{\rho}_2\Big)T\Big(n_{b, \bar{\rho}_a}(r_b)\Big).
\end{align*}
The condition (A4) implies the similar estimate:
\begin{equation*}
\begin{aligned}
\int\frac{\rho L\Big(m_{\rho}(\eta)\Big)}{\eta^2}
&\geq\int\frac{\rho_1 L\Big(m_{\rho_1}(\eta)\Big)}{\eta^2}+\int\frac{\rho_2 L\Big(m_{\rho_2}(\eta)\Big)}{\eta^2}\\
&\geq \int\frac{\bar{\rho}_1(a_1^{-1}\cdot) a_1^4 L\Big(m_{\bar{\rho}_1}(\eta(a_1^{-1}\cdot))\Big)}{a_1^2[\eta(a_1^{-1}\cdot)]^2}
+\int\frac{\bar{\rho}_2(a_2^{-1}\cdot) a_2^4 L\Big(m_{\bar{\rho}_2}(\eta(a_2^{-1}\cdot))\Big)}{a_2^2[\eta(a_2^{-1}\cdot)]^2}\\
&\geq a_1^5\int\frac{\bar{\rho}_1 L\Big(m_{\bar{\rho}_1}(\eta)\Big)}{\eta^2}
+a_2^5\int\frac{\bar{\rho}_2 L\Big(m_{\bar{\rho}_2}(\eta)\Big)}{\eta^2}.
\end{aligned}
\end{equation*}
\begin{equation*}
\begin{aligned}
\int \rho B\rho = a_1^5 \int \bar{\rho}_1 B\bar{\rho}_1 +a_2^5\int \bar{\rho}_1 B\bar{\rho}_1 +2\int \rho_1 B\rho_2.
\end{aligned}
\end{equation*}
Therefore, set $F_{b}=\min\limits_{\rho\in W^b}E_{b}(\rho)$ and then $F_b<0$, 
\begin{equation*}
\begin{aligned}
E_b(\rho)&\geq a_1^5 E_b(\bar{\rho}_1)+ a_2^5 E_b(\bar{\rho}_2)-\int \rho_1 B\rho_2\\
&\geq  F_b-(1-a_1^5-a_2^5)F_b-\int \rho_1 B\rho_2\\
&\geq F_b-\frac{1}{C}M_1M_2 F_b-\int \rho_1 B\rho_2,
\end{aligned}
\end{equation*}
where one has used the inequality $x (1-x)\leq C [1-x^{5/3}-(1-x)^{5/3}]$ for $0\leq x\leq 1$.
\begin{equation*}
\begin{aligned}
\int \rho_1 B\rho_2&=\int(\rho \chi_{|x|\leq \frac{r}{2}}+\rho \chi_{ \frac{r}{2}<|x|\leq r}) B(\rho\chi_{|x|> r})\\
&=\iint\frac{\Big(\rho \chi_{|x|\leq \frac{r}{2}}\Big)(x)\Big(\rho\chi_{|y|> r}\Big)(y)}{|x-y|} \ dx dy+\frac{1}{4\pi}\int\nabla B(\rho \chi_{ \frac{r}{2}<|x|\leq r})\cdot \nabla B(\rho\chi_{|x|> r})\\
&\leq Cr^{-1}+C\|\nabla B(\rho \chi_{ \frac{r}{2}<|x|\leq r})\|_2 M^{1/3}\|\rho \|_{4/3}^{2/3},
\end{aligned}
\end{equation*}
where the definition of $B$ \eqref{B} and the inequality \eqref{ineq2} are used.
Then it follows that
\begin{equation*}
-F_bM_1M_2\leq Cr^{-1}+C\|\nabla B(\rho \chi_{ \frac{r}{2}<|x|\leq r})\|_2+C(E_b(\rho)-F_b)
\end{equation*}
Now applying the above inequality to the sequence $\{\rho_{R_j} \}_{j\geq1}$,
Since $r>2r_0$, one can get by  \cref{lem11}  that $M_{R_j, 1}\geq \epsilon_0$.
And then
\begin{equation*}
\begin{aligned}
-F_b\epsilon_0M_{R_j,2}&\leq Cr^{-1}+C\|\nabla B(\rho_{R_j} \chi_{ \frac{r}{2}<|x|\leq r})\|_2+C(E_b(\rho_{R_j})-F_b)\\
&\leq Cr^{-1}+C\|\nabla B(\rho_{R_j} \chi_{ \frac{r}{2}<|x|\leq r})-\nabla B(\rho_0 \chi_{ \frac{r}{2}<|x|\leq r})\|_2\\
&\qquad +C\|\nabla B(\rho_0 \chi_{ \frac{r}{2}<|x|\leq r})\|_2+C(E_b(\rho_{R_j})-F_b)
\end{aligned}
\end{equation*}
\cref{prop4} and \cref{lem12} show that the second term and the forth term in the right hand side are small for $R_j$ large enough.
$\|\nabla B(\rho_0 \chi_{ \frac{r}{2}<|x|\leq r})\|_2$\ tends to $0$ as $r\rightarrow \infty$  if  $\|\nabla B\rho_0 \|_2$ is bounded.
Therefore, $M_{R_j,2}$ is uniformly small by choosing $r$ large enough and this proves the lemma.
\end{proof}

\begin{lem}
\label{lem14}
If the condition (A7) holds, then there exists a $l<0$ such that $\lambda_{R_j}\leq l<0$ for all $R_j>2r_1$ for some $r_1>2r_0>0$.
\end{lem}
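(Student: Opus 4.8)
The plan is to evaluate the Euler--Lagrange identity \eqref{eprime2r} at a carefully chosen point far from the origin and to show that the right--hand side of \eqref{eprime} is bounded above by a strictly negative constant there, uniformly in $R_j$. The genuinely new feature compared with \cite{Auchmuty1971} is the entropy term $\int_{r_b(y)>r_b(x)}A(\rho_{R_j}(y))T'(n_{b,\rho_{R_j}}(r_b(y)))\,dy$, and condition (A7) is exactly what makes it tractable: it forces this term to be nonpositive once the base point is far enough out.

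First I would use Lemma \ref{lem13} to fix, once and for all, a radius $r_1>2r_0$ so large that the tail mass satisfies $\int_{|x|\ge r_1}\rho_{R_j}\le\delta_0$ for every $R_j$, with $\delta_0$ as in (A7); passing to the variable $r_b$ this produces a radius $s_1$, comparable to $r_1$, with $n_{b,\rho_{R_j}}(s_1)\ge M-\delta_0$. Then for every $R_j>2r_1$ the cutoff $|x|\ge R_j$ does not interfere near radius $r_1$, so \eqref{eprime2r} is available there. I would evaluate at a point $x^\ast$ in the equatorial plane $z=0$, so that $\eta(x^\ast)=r_b(x^\ast)$ is of order $s_1$, chosen so that $\rho_{R_j}(x^\ast)$ is small or zero; such a point exists because the shell near $s_1$ carries mass at most $\delta_0$, and the case $\rho_{R_j}(x^\ast)=0$ is admissible by continuity of $\rho_{R_j}$ and of $E_b'$ together with $A'(0)=0$.

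At such a point the four contributions to \eqref{eprime} are handled as follows. (i) The leading term $A'(\rho_{R_j}(x^\ast))T(\cdot)$ is small, since $\rho_{R_j}(x^\ast)$ is small and $A'(0)=0$. (ii) The entropy term is $\le 0$: every $y$ with $r_b(y)>r_b(x^\ast)$ satisfies $n_{b,\rho_{R_j}}(r_b(y))\ge n_{b,\rho_{R_j}}(s_1)\ge M-\delta_0$, so $T'=S'e^{S}\le0$ there by (A7), while $A(\rho_{R_j})\ge0$. (iii) The centrifugal term obeys $\int_{\eta(x^\ast)}^{\infty}L(m_{\rho_{R_j}}(s))s^{-3}\,ds\le \tfrac12 L(M)\,\eta(x^\ast)^{-2}=O(s_1^{-2})$. (iv) The Newtonian term satisfies $B\rho_{R_j}(x^\ast)\ge \epsilon_0 (|x^\ast|+r_0)^{-1}\ge c\,s_1^{-1}$, since by Lemma \ref{lem11} a definite amount of mass $\epsilon_0$ sits within distance $r_0$ of the origin. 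Combining these, $\lambda_{R_j}=E_b'(\rho_{R_j})(x^\ast)\le o(1)+0+O(s_1^{-2})-c\,s_1^{-1}$; as $s_1$ is a fixed large constant, the gravitational term of order $s_1^{-1}$ dominates the centrifugal term of order $s_1^{-2}$, giving $\lambda_{R_j}\le l<0$ with $l$ independent of $R_j$.

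The main obstacle is the competition between the centrifugal term (size $\eta^{-2}$) and the gravitational term (size $\eta^{-1}$), which only favours the latter when the evaluation radius is large, together with the requirement that the density be small there. The delicate case is when the support of $\rho_{R_j}$ stays inside radius $r_1$, so that no far--out positive--density point is available and one must instead evaluate at the outer edge $r^\ast$ of the support; there $B\rho_{R_j}\gtrsim M/r^\ast$ but the centrifugal term is $\sim L(M)(r^\ast)^{-2}$, so one needs a uniform lower bound $r^\ast\ge c_0>0$ to keep the gravitational term dominant. I would secure this from the a priori estimates already established: $\|\rho_{R_j}\|_\infty\le k_1$ (Lemma \ref{lem7}) forces $r^\ast\gtrsim M^{1/3}$, and, more decisively, the energy bound $E_b(\rho_{R_j})\le e<0$ (Lemma \ref{lem8}) controls the rotational energy and thereby prevents the configuration from concentrating, which is precisely what excludes the dangerous small-$r^\ast$ regime and renders the constant $l$ uniform in $R_j$.
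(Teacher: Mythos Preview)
Your approach is essentially the paper's: use Lemma \ref{lem13} to push the tail mass below $\delta_0$, so that (A7) makes the entropy integral nonpositive, then evaluate the Euler--Lagrange relation at a point with $\eta(x)>r_1$, $|x|<2r_1$, and small density, balancing $O(r_1^{-1})$ gravity against $O(r_1^{-2})$ rotation. The first three paragraphs match the paper almost line for line.

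The last paragraph, however, is handling a phantom difficulty. You invoke \eqref{eprime2r}, the \emph{equality} $E_b'(\rho_{R_j})=\lambda_{R_j}$ on $\{\rho_{R_j}>0\}$, and then worry about what happens when the support of $\rho_{R_j}$ lies entirely inside radius $r_1$, so that your chosen $x^\ast$ sits strictly outside the support and continuity no longer pushes the equality out to it. That drives you to evaluate at the outer edge $r^\ast$ of the support instead, where you then need $r^\ast$ large enough for gravity to beat rotation; your attempted lower bound on $r^\ast$ via $\|\rho_{R_j}\|_\infty\le k_1$ gives only $r^\ast\gtrsim (M/k_1)^{1/3}$, which is not obviously $\gtrsim L(M)/M$, and the appeal to the energy bound is too vague to close the gap.

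The paper avoids all of this by using \eqref{eprime1r}, the \emph{inequality} $E_b'(\rho_{R_j})\ge\lambda_{R_j}$, which holds wherever $|x|<R_j$ and $\rho_{R_j}(x)<R_j$ --- in particular wherever $\rho_{R_j}(x)=0$. So one simply fixes $x$ with $|x|<2r_1$, $\eta(x)>r_1$, and $\rho_{R_j}(x)\le Cr_1^{-3}$ (such a point exists since the region has volume $\sim r_1^3$ and carries mass $\le\delta_0$), and reads off $\lambda_{R_j}\le E_b'(\rho_{R_j})(x)\le o(r_1^{-1})+0+O(r_1^{-2})-O(r_1^{-1})$. No case split on the size of the support is needed, and the argument in your final paragraph can be deleted.
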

\begin{proof}
By  \cref{lem13}, choosing $r_1>2r_0$ large such that 
\begin{equation*}
\int_{|x|\geq r_1}\rho_{R_j}(x)\ dx\leq \delta_0, \qquad \text{for} \ \text{all}\ R_j.
\end{equation*} 
Hence,  (A7) gives that
\begin{equation*}
T'\Big(n_{b,\rho_{R_j}}(r_b(x))\Big) \leq 0 \qquad \text{for} \ \text{all}\ |x|\geq r_1.
\end{equation*}
Suppose that $R_j\geq \max\{\pi_1,2r_1\}$. 
Set $x_{R_j}$ as in \cref{lem10}, and then $|x_{R_j}|+1\leq r_0<\frac{r_1}{2}$.
Note that $A'(\rho)=o(\rho^{1/3})$ as $\rho\rightarrow 0$.
Now for any $x$ satisfying $|x|<2r_1$, $\eta(x)>r_1$ and $\rho_{R_j}(x) \leq C r_1^{-3}$, $C$ independent of $r_1$,
\begin{equation*}
\begin{aligned}
&\qquad A'(\rho_{R_j})=o(\rho_{R_j}^{1/3})=o(r_1^{-1})\\
&\int_{r_b(y)>r_b(x)}A(\rho_{R_j}(y))T'\Big(n_{b,\rho_{R_j}}(r_b(y))\Big)\ dy \leq 0\\
&\int_{\eta(x)}^{\infty}L(m_{\rho_{R_j}}(s))s^{-3}\ ds\leq \frac{1}{2}\sup L(m) \cdot \frac{1}{4}r_1^{-2}=O(r_1^{-2})\\
&B\rho_{R_j}(x)\geq \int_{|y-x_{R_j}|<1}\frac{\rho_{R_j}(y)}{|y-x|}\ dy\geq \dfrac{\epsilon_0}{r_0+2r_1}=O(r_1^{-1}).
\end{aligned}
\end{equation*}
So it follows  the inequality \eqref{eprime1r} that 
\begin{equation*}
\lambda_{R_j} \leq o(r_1^{-1})+O(r_1^{-2})-O(r_1^{-1})
\end{equation*}
Fixing $r_1$ yields  \cref{lem14}.
\end{proof}

\begin{proof}[\textbf{Proof of  \cref{thm2}}]
It follows from \eqref{eprime2r} and  \cref{lem14} that
\begin{equation*}
\label{eprime3r}
\begin{aligned}
 A'(\rho_{R_j}(x))T(n_{b,\rho_{R_j}}(r_b(x)))&\leq 
-\int_{r_b(y)>r_b(x)}A(\rho_{R_j}(y))T'(n_{b,\rho_{R_j}}(r_b(y)))\ dy\\
&\qquad\qquad+B\rho_{R_j}(x)+l.
\end{aligned}
\end{equation*}
To prove that there is a $R_0$ such that $\rho_{R_j}=0$ outside the ball $B_{R_0}$, it suffices to prove that $-\int_{r_b(y)>r_b(x)}A(\rho_{R_j}(y))T'(n_{b,\rho_{R_j}}(r_b(y)))\ dy\leq -l/2$ and $B\rho_{R_j}(x)\leq-l/2$ when $|x|>R_0$.

We first prove that $-\int_{r_b(y)>r_b(x)}A(\rho_R(y))T'(n_{b,\rho_R}(r_b(y)))\ dy\leq -l/2$ for $|x|$ large enough.
By (A1) and (A2), there exists a constant $s_c'$ such that 
\begin{equation*}
A(s)\leq\begin{cases}
 Cs^{4/3} & s\leq s_c'\\
 Cs^{\bar{\gamma}} & s>s_c'
\end{cases},
\end{equation*}
where $C$ is a new constant. 
Thus, 
\begin{equation*}
\begin{aligned}
\int_{r_b(y)>r_b(x)} A(\rho_{R_j}(y))& T'(n_{b,\rho_{R_j}}(r_b(y)))\  dy \leq T_0\int_{r_b(y)>r_b(x)} A(\rho_{R_j})\\
 &\leq
CT_0(\|\rho_{R_j}\|_{\infty}^{1/3}+\|\rho_{R_j}\|_{\infty}^{\bar{\gamma}-1})\int_{r_b(y)>r_b(x)}\rho_{R_j},
\end{aligned} 
\end{equation*}
where $C$ is a constant independent of $\rho_{R_j}$.
Then  \cref{lem13} implies that there is a $r_2>2r_0>0$ such that for all $|x|\geq r_2$  
$$
-\int_{r_b(y)>r_b(x)}A(\rho_{R_j}(y))T'(n_{\rho_{R_j}}(r(y))) dy\leq -l/2 \qquad \text{for all} \ j. $$ 

$B\rho_{R_j}(x) \leq -l/2$ follows the proof of Auchmuty and Beals \cite{Auchmuty1971}, and for completeness we present it here.
Let 
$$
\epsilon_{r_3, R}=\sup\limits_{|x|>r_3/2} \int_{|y-x|<1}\rho_R(y)\ dy.
$$
A similar argument to  \cref{lem11} gives that $\epsilon_{r_3,R}=O(r_3^{-1})$ as $r_1\rightarrow\infty$. Then for $|x|>r_3$ and $1<r<\frac{1}{2}r_3$, we have
\begin{align*}
B\rho_R(x)&=\int\limits_{|x-y|<1}+\int\limits_{1\leq|x-y|<r}+\int\limits_{|x-y|\geq r}\\
&\leq C(r_3^{-\alpha_1}+r_3^{-\alpha_2})+Cr_3^{-1}r^3+Mr^{-1}.
\end{align*}
Note that the shell $1<|y-x|<r$ can be covered by no more than $Cr$ balls of radius $1$. 
Choosing $r$ large and then $r_3$ large, we get that $B\rho_{R_j}(x)\leq -l/2$ when $|x|\geq r_3$ for all $R_j$.

As a result, there exists a $R_0=\max\{2r_1, r_2, r_3\}$ such that $\{\rho_{R_j}\}_{j\geq1}\subset W_{R_0}^b$. 
The proof of  \cref{lem4} shows that $E_b$ is weakly lower semicontinuous on $W_{R_0}^b$. 
Hence the weakly limit $\rho_0$ is also in $W_{R_0}^b$ and $\lim\limits_{j\rightarrow\infty}E_b(\rho_{R_j})=E_b(\rho_0)$. 
So  \cref{lem13} yields that $\rho_0$ is the compactly supported minimizer in $W^b$. (Actually, any $\rho_{R_j}$ satisfy $R_j\geq R_0$ is the minimizer in $W_{R_0}^b$.) 
Consequently, Theorem 2 is proved.
\end{proof}
\clearpage


\section{An Improved Existence Theorem}
 
In this section, we will improve Theorem 2 by enlarging the admissible set $W^b$ to $\bigcup\limits_{ b\in[\frac{1}{\xi},\xi] } W^b$.
There is also an conjecture to slightly improve \cref{thm3} in \cref{rem-conjecture} in the end of section 5.

\cref{thm2} shows that for any $b>0$ there exists a $\rho_b\in W^b$ which minimizes $E_b$, and there exists a constant $R_b$ such that such $\rho_b$ vanishes for $|x| \geq R_b$. 
The minimum of $E_b$ on $W^b$ can be considered as a function of $b$. 
We denote it as $F_b=E_b(\rho_b)$ and thus $F_b$ represents the energy minimum for $E_b$.
To prove Theorem 3, it suffices to prove that 
\begin{equation}
\label{3.3}
F_b \ \text{is continuous with respect to} \ b \ \text{in} \ [\frac{1}{\xi},\xi].
\end{equation}

\begin{cor}
\label{lem16} Fix $b_0\in [\frac{1}{\xi},\xi]$ and let $ \Gamma=\{b\in[\frac{1}{\xi},\xi]~|~ F_b\leq F_{b_0}\}$.
Then there exists a $K_0<\infty$ such that 
\begin{equation*}
\max\limits_{b\in \Gamma} \{\| \rho_b\|_{4/3} \}\leq K_0
\end{equation*} 
\end{cor}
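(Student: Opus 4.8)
The plan is to reproduce the estimate of Lemma \ref{lem5} but to track the dependence of every constant on $b$ and verify that none of them actually depends on $b$; this $b$-uniformity is the only genuinely new ingredient. The starting point is the energy bound: for each $b\in\Gamma$ the minimizer $\rho_b$ satisfies $E_b(\rho_b)=F_b\le F_{b_0}$, while $\rho_b\in W^b$ gives $\rho_b\ge0$ and $\int\rho_b=M$.

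First I would bound $E_b(\rho_b)$ from below term by term. Since $L\ge0$ by (A3), the kinetic integral is nonnegative and may be discarded; since $A\ge0$ and $T\ge T_1$ by \eqref{conditionT}, the internal-energy integral is at least $T_1\int A(\rho_b)$; and by Proposition \ref{prop2} together with $\int\rho_b=M$ the gravitational integral obeys $\tfrac12\int\rho_b B\rho_b\le \tfrac12 C M^{2/3}\|\rho_b\|_{4/3}^{4/3}$, where $C$ is the universal constant of Proposition \ref{prop2}. Combining these yields the analogue of \eqref{ineq4},
$$T_1\int A(\rho_b)\le F_{b_0}+C M^{2/3}\|\rho_b\|_{4/3}^{4/3}.$$

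Next I would exploit the superlinear growth of $A$ forced by (A1). Since $\lim_{s\to\infty}f(s)s^{-4/3}=\infty$ implies $A(s)s^{-4/3}\to\infty$, for any prescribed constant $C'$ there is a threshold $s_c$, depending only on $f$ and $C'$, with $A(s)\ge C's^{4/3}$ for $s\ge s_c$. Splitting the integral and using $\int_{\rho_b<s_c}\rho_b^{4/3}\le s_c^{1/3}\int\rho_b=s_c^{1/3}M$ gives $\int A(\rho_b)\ge C'\|\rho_b\|_{4/3}^{4/3}-C's_c^{1/3}M$. Inserting this into the previous display and choosing $C'$ so large that $T_1C'-CM^{2/3}\ge 1$, I obtain
$$\|\rho_b\|_{4/3}^{4/3}\le F_{b_0}+T_1C's_c^{1/3}M=:K_0^{4/3},$$
a finite, nonnegative quantity.

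Finally I would emphasize the point that distinguishes this from Lemma \ref{lem5}: every constant above—$C$ from Proposition \ref{prop2}, $T_1$ from \eqref{conditionT}, the total mass $M$, and the pair $C',s_c$ coming from $f$ through (A1)—is independent of $b$, and $F_{b_0}$ is a fixed finite number. Hence $K_0$ is uniform over $b\in\Gamma$, which is exactly the assertion. I do not expect a serious obstacle here: the estimate is the $b$-uniform restatement of Lemma \ref{lem5} (equivalently \cite[Lemma~4]{Auchmuty1971}), and the entire substance is the bookkeeping confirming that the constants do not see $b$.
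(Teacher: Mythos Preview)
Your proposal is correct and follows essentially the same approach as the paper: the paper's proof simply observes that $E_b(\rho_b)=F_b\le F_{b_0}$ for every $b\in\Gamma$ and then invokes the argument of Lemma~\ref{lem5} (i.e.\ \cite[Lemma~4]{Auchmuty1971}) verbatim, which is exactly the estimate you have written out in full, with the same bookkeeping that all constants are independent of $b$.
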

\begin{proof}
Note that for any $b\in\Gamma$, $F_b=E_b(\rho_b)\leq F_{b_0}<0$. So the proof of  \cref{lem5} gives the desired conclusion which completes the proof of the corollary.
\end{proof}

\begin{cor}
\label{lem17}
Define 
$$
\rho^*:=\max\limits_{b\in \Gamma} \{\| \rho_b\|_{\infty} \}.
$$
Then $\rho^*<\infty.$
\end{cor}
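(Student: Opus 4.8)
The plan is to establish a uniform-in-$b$ analogue of the $L^\infty$ bound of Lemma \ref{lem7}. For each $b\in\Gamma$ the minimizer $\rho_b$ satisfies the Euler--Lagrange relations of Lemma \ref{lem2}, namely $E_b'(\rho_b)\geq\lambda_b$ a.e.\ and $E_b'(\rho_b)=\lambda_b$ where $\rho_b>0$, with $E_b'$ given by \eqref{eprime}. The entire point is to verify that every constant entering the regularity argument of Lemma \ref{lem3} and Lemma \ref{lem7} can be taken independent of $b$; since $b$ ranges over the compact interval $[\frac1\xi,\xi]$, bounded away from $0$ and $\infty$, and since Corollary \ref{lem16} already supplies a uniform bound $\|\rho_b\|_{4/3}\leq K_0$, this should go through.

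First I would bound $\int A(\rho_b)$ uniformly over $\Gamma$. Dropping the nonnegative kinetic term in \eqref{energy} and using \eqref{conditionT} together with \eqref{ineq2} and Corollary \ref{lem16},
\begin{equation*}
T_1\int A(\rho_b)\leq E_b(\rho_b)+\tfrac12\int\rho_b B\rho_b\leq F_{b_0}+C\|\rho_b\|_{4/3}^{4/3}M^{2/3}\leq F_{b_0}+CK_0^{4/3}M^{2/3}=:C_0,
\end{equation*}
where $F_b\leq F_{b_0}$ was used. Thus $\int A(\rho_b)\leq C_0/T_1$ for all $b\in\Gamma$. Next I would bound the Lagrange multiplier uniformly: running the computation of Lemma \ref{lem6} for $\rho_b$ (the far-field terms in $E_b'$ being negligible and the $A'$ contribution vanishing on the special subset), $\lambda_b$ is controlled by the second term of \eqref{eprime}, so that $\lambda_b\leq T_0\int A(\rho_b)\leq T_0C_0/T_1$ uniformly in $b\in\Gamma$.

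Feeding this back into $E_b'(\rho_b)=\lambda_b$ and estimating the second term of \eqref{eprime} by $T_0\int A(\rho_b)$ via \eqref{conditionT}, I obtain, where $\rho_b>0$,
\begin{equation*}
T_1A'(\rho_b)\leq\max\Big\{0,\ \lambda_b-\int_{r_b(y)>r_b(x)}A(\rho_b(y))T'(n_{b,\rho_b}(r_b(y)))\,dy+B\rho_b\Big\}\leq\max\{0,\ 2T_0C_0/T_1+B\rho_b\}.
\end{equation*}
By (A1) one has $A'(s)\geq c_2 s^{1/3}$ for $s$ large, for any prescribed $c_2$, so exactly as in Lemma \ref{lem3} there are constants $c_1$ and $K$, now independent of $b$, with $\rho_b^{1/3}\leq c_1 B\rho_b$ a.e.\ on $\{\rho_b\geq K\}$.

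Finally I would run the bootstrap of the proof of Theorem \ref{thm1}: starting from $\|\rho_b\|_{4/3}\leq K_0$ and the inequality $\rho_b^{1/3}\leq c_1 B\rho_b$, the Riesz-potential estimates of Propositions \ref{prop1}--\ref{prop3}---whose constants depend only on the exponents, not on $\rho_b$---successively raise the integrability of $\rho_b$ until $\rho_b\in L^p$ for some $p>3$, at which point $B\rho_b$ is bounded and hence so is $\rho_b$, with a bound depending only on $C_0$, $K_0$, $K$ and $c_1$. Taking the supremum over $b\in\Gamma$ yields $\rho^*<\infty$. I expect the main obstacle to be the bookkeeping of $b$-independence throughout: one must check that the special subset and far-field estimates behind the multiplier bound of Lemma \ref{lem6}, which involve $r_b$ and the ellipsoidal geometry, remain uniformly controlled for $b\in[\frac1\xi,\xi]$, and that the iteration terminates in a number of steps and with constants not depending on $b$.
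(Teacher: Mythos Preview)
Your proposal is correct and takes essentially the same approach as the paper: the paper's proof is a one-line instruction to rerun Lemmas~\ref{lem6} and~\ref{lem7} with $E_b(\rho_{\pi_0})$ replaced by $F_{b_0}$ and $k_0$ by $K_0$, and you have simply written out those details, deriving the uniform bound on $\int A(\rho_b)$, then on $\lambda_b$, then the pointwise inequality $\rho_b^{1/3}\le c_1 B\rho_b$ with $b$-independent constants, and finally the bootstrap. Your closing caveat about checking that the geometric far-field estimates of Lemma~\ref{lem6} remain uniform for $b\in[\tfrac1\xi,\xi]$ is precisely the bookkeeping the paper glosses over, and it goes through because $b$ lies in a compact interval bounded away from $0$ and $\infty$.
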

\begin{proof}
The proof follows from   \cref{lem6} and \cref{lem7} by replacing the term $E_b(\rho_{\pi_0})$ and $k_0$ with $F_{b_0}$ and $K_0$ respectively.
\end{proof}

\begin{cor}
\label{lem18}
For any $b\in \Gamma$,
$$\|B\rho_b \|_{\infty}\geq -2F_{b_0}/M,$$ 	
and there exists a uniform constant $R_0$ such that any $\rho_b$ obtained in  \cref{thm1} vanishes for $|x|\geq R_0$.
\end{cor}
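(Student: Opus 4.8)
The plan is to prove Corollary~\ref{lem18} by re-running the arguments of Section~4 for the whole family $\{\rho_b\}_{b\in\Gamma}$, keeping careful track that every constant depends only on the fixed data $\xi, M, F_{b_0}$ and on the uniform bounds $K_0$ (Corollary~\ref{lem16}) and $\rho^*$ (Corollary~\ref{lem17}), never on the individual $b$. The lower bound on the potential is immediate: since $A\ge 0$, $T\ge T_1>0$ and $L\ge 0$, the first two integrals in $E_b$ \eqref{energy} are nonnegative, so $F_b=E_b(\rho_b)\ge-\tfrac12\int\rho_b B\rho_b$. Combined with $F_b\le F_{b_0}<0$ this gives $\int\rho_b B\rho_b\ge -2F_{b_0}>0$, and since $\rho_b\ge 0$ with $\int\rho_b=M$ we get $\int\rho_b B\rho_b\le M\|B\rho_b\|_\infty$, hence $\|B\rho_b\|_\infty\ge -2F_{b_0}/M$ uniformly in $b\in\Gamma$. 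This is the exact analogue of Corollary~\ref{lem9}.

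Next I would port the concentration and centering estimates. Mimicking Lemma~\ref{lem10}: if $\sup_x\int_{|y-x|<1}\rho_b$ were not bounded below uniformly, a sequence $b_j$ would drive it to $0$, and the split $B\rho_{b_j}=B_1+B_2+B_3$ estimated through Proposition~\ref{prop1} with the uniform bound $\|\rho_{b_j}\|_{4/3}\le K_0$ would force $\|B\rho_{b_j}\|_\infty\to0$, contradicting the previous paragraph. This yields a uniform $\epsilon_0>0$ and centers $x_b$ with $\int_{|x-x_b|<1}\rho_b\ge\epsilon_0$. The ellipsoidal geometry of Lemma~\ref{lem11} then gives $|x_b|+1\le(\tfrac{4\pi M}{\epsilon_0}+1)(2b+b^{-1}+1)\le(\tfrac{4\pi M}{\epsilon_0}+1)(3\xi+1)=:r_0$, which is uniform precisely because $b\in[\tfrac1\xi,\xi]$.

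The hard part is the uniform tail decay, i.e. the analogue of Lemma~\ref{lem13}. Here I exploit that each $\rho_b$ is an actual minimizer, so in the inequality produced in Lemma~\ref{lem13} the term $C(E_b(\rho_b)-F_b)$ vanishes; splitting $\rho_b$ at radius $r>2r_0$ and using (A4),(A6), with $\int_{|x|\le r}\rho_b\ge\epsilon_0$ and $-F_b\ge -F_{b_0}$, yields
\[ (-F_{b_0})\,\epsilon_0\int_{|x|>r}\rho_b\le C r^{-1}+C\Big(\int_{r/2<|x|\le r}\rho_b^{4/3}\Big)^{1/2}, \]
where the shell term comes from bounding the cross term $\int\rho_1 B\rho_2$ by Proposition~\ref{prop2} and $\|\rho_b\|_{4/3}\le K_0$. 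The obstacle is that the naive bound $\int_{r/2<|x|\le r}\rho_b^{4/3}\le(\rho^*)^{1/3}\int_{|x|>r/2}\rho_b$ makes this self-referential with a subcritical exponent and does \emph{not} close (the resulting iteration has a nonzero fixed point). I resolve this by a pigeonhole choice of radius: since $\sum_k\int_{2^k\le|x|<2^{k+1}}\rho_b^{4/3}\le\|\rho_b\|_{4/3}^{4/3}\le K_0^{4/3}$, among any $N$ dyadic shells beyond radius $2^{k_0}>2r_0$ there is one, at some $r_b^*\in[2^{k_0},2^{k_0+N}]$ (depending on $b$), with $\int_{\text{shell}}\rho_b^{4/3}\le K_0^{4/3}/N$. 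Applying the displayed inequality at $r=r_b^*$ gives $\int_{|x|>r_b^*}\rho_b\le C\big(2^{-k_0}+(K_0^{4/3}/N)^{1/2}\big)$, which is $\le\delta$ once $N,k_0$ are fixed large depending only on the data; since $r_b^*\le 2^{k_0+N}=:r_\delta$ is uniform and $r\mapsto\int_{|x|>r}\rho_b$ is nonincreasing, $\int_{|x|>r_\delta}\rho_b\le\delta$ for all $b\in\Gamma$.

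With uniform tail decay in hand the rest follows the template of Lemma~\ref{lem14} and the proof of Theorem~\ref{thm2}. Choosing $r_1$ (uniform) with $\int_{|x|\ge r_1}\rho_b\le\delta_0$, condition (A7) gives $T'(n_{b,\rho_b}(r_b))\le0$ for $|x|\ge r_1$, and repeating Lemma~\ref{lem14} with the uniform $r_0,\epsilon_0$ produces $\lambda_b\le l<0$ uniformly. Then from the Euler--Lagrange relation (Lemma~\ref{lem2}) and \eqref{eprime}, wherever $\rho_b>0$,
\[ 0\le A'(\rho_b)\,T\big(n_{b,\rho_b}(r_b)\big)\le -\int_{r_b(y)>r_b(x)}A(\rho_b(y))\,T'\big(n_{b,\rho_b}(r_b(y))\big)\,dy+B\rho_b(x)+l. \]
Using $A(\rho_b)\le C\rho_b$ (valid since $\rho_b\le\rho^*$ and $\int_0^{\rho^*}f(t)t^{-2}\,dt<\infty$ by (A1)), $|T'|\le T_0$ \eqref{conditionT} and the uniform tail decay, the first term is $\le -l/2$ for $|x|\ge r_2$; and by the $\epsilon_{r_3,b}=O(r_3^{-1})$ estimate of Lemma~\ref{lem11} together with Proposition~\ref{prop1}, $B\rho_b(x)\le -l/2$ for $|x|\ge r_3$, both uniformly. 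The right side is then $\le0$ while the left is $>0$ where $\rho_b>0$, a contradiction; hence $\rho_b(x)=0$ for $|x|\ge R_0:=\max\{2r_1,r_2,r_3\}$, a uniform constant, completing the proof. I expect the only genuinely delicate point to be the pigeonhole step of the third paragraph: uniform tightness of the family cannot be read off the self-improving energy inequality and must be extracted from the uniform boundedness of $\|\rho_b\|_{4/3}$.
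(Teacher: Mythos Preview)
Your proof is correct and considerably more detailed than the paper's own two-sentence argument. The paper simply writes: ``A similar argument to Lemma~\ref{lem9} gives the lower bounds of $\|B\rho_b\|_\infty$. With this and $b\in\Gamma\subset[\tfrac1\xi,\xi]$, the proof of Theorem~2 shows that $R_0$ can be uniformly bounded.'' Everything in your first, second and last paragraphs is exactly what the paper intends the reader to supply, and you track the dependence on $\xi$, $K_0$, $\rho^*$, $F_{b_0}$ correctly.

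The genuine difference is your third paragraph. The paper's proof of Lemma~\ref{lem13} controls the shell term $\|\nabla B(\rho_{R_j}\chi_{r/2<|x|\le r})\|_2$ by comparing to the \emph{fixed} weak limit $\rho_0$ via Proposition~\ref{prop4}; when $b$ ranges over $\Gamma$ there is no single limit to compare to, so that step does not carry over verbatim (the paper's terse ``the proof of Theorem~2 shows\ldots'' glosses over this). Your dyadic pigeonhole is a clean substitute: from $\sum_k\int_{2^k\le|x|<2^{k+1}}\rho_b^{4/3}\le K_0^{4/3}$ you locate, for each $b$, a shell with $L^{4/3}$ mass $\le K_0^{4/3}/N$ inside a fixed window $[2^{k_0},2^{k_0+N}]$, and then the splitting inequality (with the energy defect $E_b(\rho_b)-F_b=0$) yields uniform tail mass at the common outer radius $2^{k_0+N}$. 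This is more elementary than the compactness-in-$b$ argument one might otherwise invoke, and it gives an explicit uniform $r_\delta$ depending only on $K_0$, $\epsilon_0$, $-F_{b_0}$. So your route and the paper's agree on the skeleton (re-run Section~4 with uniform constants), but you supply a self-contained argument precisely at the one place where the paper's mechanism (weak convergence to a single $\rho_0$) is unavailable.
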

\begin{proof}
A similar argument to  \cref{lem9} gives the lower bounds of $\|B\rho_b\|_{\infty}$.
With this and $b\in\Gamma\subset[\frac{1}{\xi},\xi]$, the proof of Theorem 2 shows that $R_0$ can be uniformly bounded.
\end{proof}

\begin{proof}[\textbf{Proof of  \cref{thm3}}] 
Now  we will prove the continuity of $F_b$.
Fix $b\in\Gamma\subset[\frac{1}{\xi},\xi]$. 
Assume that $\rho\in W^b$ and $\bar{b}\in[\frac{1}{\xi},\xi]$, 
and define $a=b/\bar{b}$,
$\bar{\rho}=a\rho(x')$, where $x=(x_1,x_2,x_3)$ and $x'=(x_1,x_2,ax_3)$.
Thus, $\rho(x)=\rho(r_b(x))$, $\bar{\rho}(x)=\bar{\rho}(r_{\bar{b}}(x))$ and $\bar{\rho}\in W^{\bar{b}}$. 
Denote 
\begin{align*}
E_{b,1}&:=\int A(\rho(x))T\Big(n_{b,\rho}(r_b(x))\Big) dx;\\
E_{\bar{b},1}&:=\int A(\bar{\rho}(x))T\Big(n_{\bar{b},\bar{\rho}}(r_{\bar{b}}(x))\Big)dx
=\int A(a\rho(x'))T\Big(n_{b,\rho}(r_b(x'))\Big) a^{-1}dx';\\
E_{b,2}&:=\int \rho(x)L\Big(m_{\rho}(\eta(x))\Big)\eta^{-2}(x)dx;\\
E_{\bar{b},2}&:=\int \bar{\rho}(x)L\Big(m_{\bar{\rho}}(\eta(x))\Big)\eta^{-2}(x) dx\\
&=\int a\rho(x')L\Big(m_{\rho}(\eta(x'))\Big)\eta^{-2}(x') a^{-1} dx'=E_{b,2};
\end{align*}
\begin{align*}
E_{b,3}&:=\iint \dfrac{\rho(x)\rho(y)}{|x-y|} dx dy
=\int\dfrac{\rho(x')\rho(y')}{|x'-y'|}dx'dy';\\
E_{\bar{b},3}&:=\iint \dfrac{\bar{\rho}(x)\bar{\rho}(y)}{|x-y|} dx dy=\iint \dfrac{\rho(x')\rho(y')}{|x-y|}dx'dy'.
\end{align*}
Note that $A'(s)\leq A'(\rho^*)$ when  $s\in[0,\rho^*]$ and $|\frac{1}{|x-y|}-\frac{1}{|x'-y'|}|\leq C_1\frac{1}{|x'-y'|}$, where $C_1$ is a continuous function of $a$. 

Now set $\rho=\rho_b$. It holds that
\begin{equation}
\label{3.4}
\begin{aligned}
|E_{\bar{b},3}-E_{b,3}|& \leq C_1 E_{b,3}\\
&\leq C_1'M^{2/3}K_0^{4/3}\\
|E_{\bar{b},1}-E_{b,1}|& \leq T_0\int C_2 |A(a\rho(x))-A(\rho(x))| dx\\
& \leq C_2A'(\rho^*) T_0 \int |a-1|\rho^* dx\\
&\leq C_2'A'(\rho^*)T_0 \rho^* (\frac{4}{3}\pi R_0^3),
\end{aligned}
\end{equation}
here $C_1, C_1', C_2$ and $C_2'$ are all continuous functions of $a$. 
This implies that for any $\epsilon>0$, there exists $\delta>0$ such that any $\bar{b}$ satisfying $|b-\bar{b}|<\delta$, 
$$
F_{\bar{b}}-F_{b}= F_{\bar{b}}-E_b(\rho_b)\leq E_{\bar{b}}(\bar{\rho})-E_{b}(\rho)<\epsilon.
$$
Note that the terms in the right hand side of equation \eqref{3.4} do not depend on $\rho_b$, but depend only on $a$.
So let $\bar{\rho}=\rho_{\bar{b}}$. 
Then a similar argument shows that
$$F_{\bar{b}}-F_{b}\geq E_{\bar{b}}(\bar{\rho})-E_{b}(\rho)>-\epsilon.$$
Therefore the continuity of $F_b$ on $\Gamma$ is proved and thus  \cref{thm3} is proved.
\end{proof}

\begin{remark}\label{rem-conjecture}
If the following claim holds,
then the condition $b\in [\frac{1}{\xi},\xi]$ in \eqref{W} can be extended to $(0,\infty)$.	

\begin{enumerate}
	\item[] 
	\begin{tabular}[t]{|p{13cm}}
		\textbf{Claim}: The maximum $\|\int_{R^3}\frac{\rho(y)}{|x-y|} dy\|_{\infty} $
		subject to the constraints of $\rho$
		\begin{equation*}
			\ 0\leq \rho(y)\leq \rho^*, \ \rho(x)=\rho(r_b(x)),\ \int \rho(y) dy =M.
		\end{equation*}
		is attained by $x=0$, $\rho=\rho^* \chi_{B_0}$
		, where $B_0$ is an ellipsoid with center $(0,0,0)$ and semi-principal axes of length $r_b^*$, $r_b^*$ and $br_b^*$, and $\frac{4}{3}\pi b(r_b^*)^3\rho^*=M$.	
	\end{tabular}
\end{enumerate}
		
Indeed a direct calculation using changing variables gives the maximum $I^*$:
        \begin{equation*}
		I^*=\int_{R^3}\frac{\rho^*\chi_{B_0}(y)}{|y|} dy=\begin{cases}
		\left(\dfrac{9\pi M^2\rho^*}{2} \right)^{1/3}\dfrac{b^{1/3}}{\sqrt{b^2-1}}\ln \Big(\sqrt{b^2-1}+b\Big) & \text{if}\ b>1\\
		\left(\dfrac{9\pi M^2\rho^*}{2} \right)^{1/3}& \text{if}\ b=1\\
		\left(\dfrac{9\pi M^2\rho^*}{2} \right)^{1/3}\dfrac{b^{1/3}}{\sqrt{1-b^2}}\arcsin\left(\sqrt{1-b^2} \right) & \text{if}\ b<1
		\end{cases}.
		\end{equation*}
$I^*$ tends to $0$ as $b$ tends to $0$ or $\infty$, which contradicts to  \cref{lem18}. 
Therefore, the bound $[\frac{1}{\xi},\xi]$ for $b$ is naturally satisfied if the above claim holds.  
\end{remark}

%
\textbf{Acknowledgement} This work was supported by National Natural Science Foundation of China (Grant No. 11901208, 11971009).
The author is grateful to Professor Zhouping Xin and Professor Tao Luo for their constructive suggestions on this work.


\bibliographystyle{amsplain}
\bibliography{rotating_star.bib}

\providecommand{\bysame}{\leavevmode\hbox to3em{\hrulefill}\thinspace}
\providecommand{\MR}{\relax\ifhmode\unskip\space\fi MR }
\providecommand{\MRhref}[2]{%
  \href{http://www.ams.org/mathscinet-getitem?mr=#1}{#2}
}
\providecommand{\href}[2]{#2}
\begin{thebibliography}{10}

\bibitem{Auchmuty1991}
Giles Auchmuty, \emph{{The global branching of rotating stars}}, Archive for
  Rational Mechanics and Analysis \textbf{114} (1991), no.~2, 179--193.

\bibitem{Auchmuty1971}
J.F.G. Auchmuty and Richard Beals, \emph{{Variational solutions of some
  nonlinear free boundary problems}}, Archive for Rational Mechanics and
  Analysis \textbf{43} (1971), no.~4, 255--271.

\bibitem{Caffarelli1980}
Luis~A. Caffarelli and Avner Friedman, \emph{{The shape of axisymmetric
  rotating fluid}}, Journal of Functional Analysis \textbf{35} (1980), no.~1,
  109--142.

\bibitem{Chandrasekhar1957}
S.~Chandrasekhar, \emph{{An introduction to the study of stellar structure}},
  Dover Publications, Inc., New York, 1957.

\bibitem{Chanillo1994}
Sagun Chanillo and Yan~Yan Li, \emph{{On diameters of uniformly rotating
  stars}}, Communications in Mathematical Physics \textbf{166} (1994), no.~2,
  417--430.

\bibitem{Chanillo2012}
Sagun Chanillo and Georg~S. Weiss, \emph{{A remark on the geometry of uniformly
  rotating stars}}, Journal of Differential Equations \textbf{253} (2012),
  no.~2, 553--562.

\bibitem{Deng2002}
Yinbin Deng, Tai-Ping Liu, Tong Yang, and Zheng-an Yao, \emph{{Solutions of
  Euler-Poisson equations for gaseous stars}}, Archive for Rational Mechanics
  and Analysis \textbf{164} (2002), no.~3, 261--285.

\bibitem{Deng2006}
Yinbin Deng and Tong Yang, \emph{{Multiplicity of stationary solutions to the
  Euler-Poisson equations}}, Journal of Differential Equations \textbf{231}
  (2006), no.~1, 252--289.

\bibitem{Federbush2014}
Paul Federbush, Tao Luo, and Joel Smoller, \emph{{Existence of magnetic
  compressible fluid stars}}, Archive for Rational Mechanics and Analysis
  \textbf{215} (2015), no.~2, 611--631.

\bibitem{Friedman1980}
Avner Friedman and Bruce Turkington, \emph{{Asymptotic estimates for an
  axisymmetric rotating fluid}}, Journal of Functional Analysis \textbf{37}
  (1980), no.~2, 136--163.

\bibitem{Friedman1981}
\bysame, \emph{{Existence and dimensions of a rotating white dwarf}}, Journal
  of Differential Equations \textbf{42} (1981), no.~3, 414--437.

\bibitem{Gu2016}
Xumin Gu and Zhen Lei, \emph{{Local well-posedness of the three dimensional
  compressible Euler-Poisson equations with physical vacuum}}, Journal de
  Math{\'{e}}matiques Pures et Appliqu{\'{e}}es \textbf{105} (2016), no.~5,
  662--723.

\bibitem{Hadzic2016}
Mahir Had{\v{z}}i{\'{c}} and Juhi Jang, \emph{{Nonlinear stability of expanding
  star solutions of the radially symmetric mass-critical Euler-Poisson
  system}}, Communications on Pure and Applied Mathematics (2017), 1--46.

\bibitem{Jang2008}
Juhi Jang, \emph{{Nonlinear instability in gravitational Euler-Poisson systems
  for $\gamma$ = 6/5}}, Archive for Rational Mechanics and Analysis
  \textbf{188} (2008), no.~2, 265--307.

\bibitem{Jang2014}
\bysame, \emph{{Nonlinear instability theory of Lane-Emden stars}},
  Communications on Pure and Applied Mathematics \textbf{67} (2014), no.~9,
  1418--1465.

\bibitem{Jang2017}
Juhi Jang and Tetu Makino, \emph{On slowly rotating axisymmetric solutions of
  the {E}uler-{P}oisson equations}, Arch. Ration. Mech. Anal. \textbf{225}
  (2017), no.~2, 873--900.

\bibitem{Jang2019a}
\bysame, \emph{On rotating axisymmetric solutions of the {E}uler-{P}oisson
  equations}, J. Differential Equations \textbf{266} (2019), no.~7, 3942--3972.

\bibitem{Jang2019}
Juhi Jang, Walter~A. Strauss, and Yilun Wu, \emph{{Existence of rotating
  magnetic stars}}, Physica D: Nonlinear Phenomena \textbf{397} (2019), 65--74.

\bibitem{Jang2013}
Juhi Jang and Ian Tice, \emph{{Instability theory of the Navier-Stokes-Poisson
  equations}}, Analysis {\&} PDE \textbf{6} (2013), no.~5, 1121--1181.

\bibitem{Lax}
Peter~D. Lax, \emph{Functional analysis}, Pure and Applied Mathematics (New
  York), Wiley-Interscience [John Wiley \& Sons], New York, 2002.

\bibitem{Li1991}
YanYan Li, \emph{{On uniformly rotating stars}}, Archive for Rational Mechanics
  and Analysis \textbf{115} (1991), no.~4, 367--393.

\bibitem{Lieb1987}
Elliott~H. Lieb and Horng-Tzer Yau, \emph{{The Chandrasekhar theory of stellar
  collapse as the limit of quantum mechanics}}, Communications in Mathematical
  Physics \textbf{112} (1987), no.~1, 147--174.

\bibitem{Lin1997}
Song-sun Lin, \emph{{Stability of gaseous stars in spherically symmetric
  motions}}, SIAM Journal on Mathematical Analysis \textbf{28} (1997), no.~3,
  539--569.

\bibitem{Liu2016a}
Xin Liu, \emph{{A model of radiational gaseous stars}}, SIAM Journal on
  Mathematical Analysis \textbf{50} (2018), no.~6, 6100--6155.

\bibitem{Liu2019c}
\bysame, \emph{{On the expanding configurations of viscous radiation gaseous
  stars: the isentropic model}}, Nonlinearity \textbf{32} (2019), no.~8,
  2975--3011.

\bibitem{Liu2019b}
\bysame, \emph{{On the expanding configurations of viscous radiation gaseous
  stars: Thermodynamic model}}, Journal of Differential Equations \textbf{32}
  (2019), no.~8, 2975--3011.

\bibitem{Luo2004}
Tao Luo and Joel Smoller, \emph{{Rotating fluids with self-gravitation in
  bounded domains}}, Archive for Rational Mechanics and Analysis \textbf{173}
  (2004), no.~3, 345--377.

\bibitem{Luo2008}
\bysame, \emph{{Nonlinear dynamical stability of Newtonian rotating and
  non-rotating white dwarfs and rotating supermassive stars}}, Communications
  in Mathematical Physics \textbf{284} (2008), no.~2, 425--457.

\bibitem{Luo2009}
\bysame, \emph{{Existence and non-linear stability of rotating star solutions
  of the compressible Euler-Poisson equations}}, Archive for Rational Mechanics
  and Analysis \textbf{191} (2009), no.~3, 447--496.

\bibitem{Luo2014}
Tao Luo, Zhouping Xin, and Huihui Zeng, \emph{{Well-posedness for the motion of
  physical vacuum of the three-dimensional compressible Euler equations with or
  without self-gravitation}}, Archive for Rational Mechanics and Analysis
  \textbf{213} (2014), no.~3, 763--831.

\bibitem{Luo2016b}
\bysame, \emph{{Nonlinear asymptotic stability of the Lane-Emden solutions for
  the viscous gaseous star problem with degenerate density dependent
  viscosities}}, Communications in Mathematical Physics \textbf{347} (2016),
  no.~3, 657--702.

\bibitem{Luo2016a}
\bysame, \emph{{On nonlinear asymptotic stability of the Lane-Emden solutions
  for the viscous gaseous star problem}}, Advances in Mathematics \textbf{291}
  (2016), 90--182.

\bibitem{Makino2015}
Tetu Makino, \emph{{On spherically symmetric motions of a gaseous star governed
  by the Euler-Poisson equations}}, Osaka Journal of Mathematics \textbf{52}
  (2015), no.~2, 545--580.

\bibitem{Makino2017}
\bysame, \emph{{An application of the Nash-Moser theorem to the vacuum boundary
  problem of gaseous stars}}, Journal of Differential Equations \textbf{262}
  (2017), no.~2, 803--843.

\bibitem{Rein2001}
Gerhard Rein, \emph{{Reduction and a concentration-compactness principle for
  Energy-Casimir functionals}}, SIAM Journal on Mathematical Analysis
  \textbf{33} (2001), no.~4, 896--912.

\bibitem{Rein2003}
\bysame, \emph{{Non-linear stability of gaseous stars}}, Archive for Rational
  Mechanics and Analysis \textbf{168} (2003), no.~2, 115--130.

\bibitem{Rudin}
Walter Rudin, \emph{Principles of mathematical analysis}, third ed.,
  McGraw-Hill Book Co., New York-Auckland-D\"{u}sseldorf, 1976, International
  Series in Pure and Applied Mathematics.

\bibitem{Strauss2018}
Walter~A. Strauss, \emph{Rapidly rotating stars}, Comm. Math. Phys.
  \textbf{368}, no.~2, 701--721.

\bibitem{Strauss2017}
Walter~A. Strauss and Yilun Wu, \emph{{Steady States of Rotating Stars and
  Galaxies}}, SIAM Journal on Mathematical Analysis \textbf{49} (2017), no.~6,
  4865--4914.

\bibitem{Strauss2019}
Walter~A Strauss and Yilun Wu, \emph{{Rapidly rotating white dwarfs}},
  Nonlinearity \textbf{33} (2020), no.~9, 4783--4798.

\bibitem{Tassoul2015}
Jean-Louis Tassoul, \emph{{Theory of rotating stars.}}, Princeton University
  Press, Princeton, jan 2015.

\bibitem{Wu2015}
Yilun Wu, \emph{{On rotating star solutions to the non-isentropic Euler-Poisson
  equations}}, Journal of Differential Equations \textbf{259} (2015), no.~12,
  7161--7198.

\bibitem{Wu2016}
\bysame, \emph{{Existence of rotating planet solutions to the Euler-Poisson
  equations with an inner hard core}}, Archive for Rational Mechanics and
  Analysis \textbf{219} (2016), no.~1, 1--26.

\end{thebibliography}




\end{document}